\newtheorem{thm}{Theorem}[section]
\newtheorem{cor}[thm]{Corollary}
\newtheorem{lem}[thm]{Lemma}
\newtheorem{prop}[thm]{Proposition}
\theoremstyle{definition}
\newtheorem{assumption}[thm]{Assumption}
\DeclareMathOperator{\Ric}{Ric}
\DeclareMathOperator{\SL}{SL}
\DeclareMathOperator{\SO}{SO}
\DeclareMathOperator{\GL}{GL}
\DeclareMathOperator{\grad}{grad}
\DeclareMathOperator{\Vol}{Vol}
\newcommand{\bbC}{\mathbb{C}}
\newcommand{\bbZ}{\mathbb{Z}}
\newcommand{\bbR}{\mathbb{R}}
\newcommand{\bbQ}{\mathbb{Q}}
\newcommand{\bbP}{\mathbb{P}}
\newcommand{\rf}{\mathrm{ref}}
\newcommand{\tor}{\mathrm{tor}}
\newcommand{\MAR}[1]{\det(d^2#1)} 
\newcommand{\lalg}[1]{\mathfrak{#1}}
\newcommand{\Potent}[1]{\mathcal{P}(\Delta_{#1})}
\newcommand{\fun}[2]{{u}^{#2}_{#1}}
\newcommand{\barDH}[1]{\mathbf{bar}_{#1}}
\newcommand{\iddbar}{\sqrt{-1}\partial\bar\partial}
\begin{document}

\title{Coupled complex Monge-Ampère equations on Fano horosymmetric manifolds}

\author[1]{Thibaut Delcroix\corref{cor1}} 
\ead{thibaut.delcroix@umontpellier.fr}
\address[1]{Thibaut Delcroix \\
Univ Montpellier, CNRS \\ Montpellier, France
}

\author[2]{Jakob Hultgren}
\ead{hultgren@umd.edu}
\address[2]{Jakob Hultgren \\
University of Maryland\\
Department of Mathematics\\
4176 Campus Drive \\
College Park, MD 20742-4015, USA
}

\cortext[cor1]{Corresponding author}

\date{}

\begin{keyword}
Horosymmetric manifold \sep coupled Kähler-Einstein metric \sep Mabuchi metric \sep Kähler-Ricci soliton \sep Monge-Ampère equation 
\MSC[2010]{14M27; 32M12; 32Q15; 35J96}
\end{keyword}

\begin{abstract}
We address a general system of complex Monge-Ampère equations on Fano horosymmetric manifolds and give necessary and sufficient conditions for existence of solutions in terms of combinatorial data of the manifold. This gives new results about Mabuchi metrics, twisted Kähler-Einstein metrics and coupled Kähler-Ricci solitons and provides a unified approach to many previous results on canonical metrics on Kähler manifolds. 
\end{abstract}

\maketitle

\section*{Résumé}

Nous considérons un système d'équation de Monge-Ampère complexes général sur les variétés horosymétriques Fano, et nous obtenons des conditions nécessaires et suffisantes d'existence de solutions en termes des données combinatoires associées à de telles variétés. Nous appliquons ce résultat général pour obtenir de nouveau résultats d'existence ou de non-existence de métriques de Mabuchi, de métriques de Kähler-Einstein tordues, de solitons de Kähler-Ricci couplés, et pour fournir une approche unifiée à de nombreux résultats de la littérature sur les métriques canoniques sur les variétés Kähler.

\section{Introduction}
The background for the present paper is given by several recent results in Kähler geometry which uses combinatorial data to give necessary and sufficient conditions for existence of canonical metrics on various types of Kähler manifolds. The starting point for this branch of Kähler geometry is the celebrated result by Wang and Zhu \cite{WZ04} which completely characterize toric Kähler-Einstein manifolds among toric Fano manifolds in terms of their moment polytope. This has led to many subsequent developments. To name a few: \cite{BB13} regarding Kähler-Ricci solitons on log Fano varieties, \cite{DelKE} regarding Kähler-Einstein metrics on group compactification, \cite{Yao} regarding Mabuchi metrics on toric Fano varieties and \cite{Hul} regarding coupled Kähler-Einstein metrics on toric Fano manifolds. The purpose of this paper is to add several new results and give a unified approach to the existing results above. This is done by providing $C^0$-estimates to a class of coupled real Monge-Ampère equations on polyhedral cones (see Theorem~\ref{thm:C0Estimates} below). 
As applications we get the following new results:
\begin{itemize}
\item A necessary and sufficient condition for existence of Mabuchi metrics on horosymmetric Fano manifolds,
\item A necessary and sufficient condition for existence of coupled Kähler-Ricci solitons on horosymmetric Fano manifolds,
\item A necessary and sufficient condition for existence of twisted Kähler-Einstein metrics on toric Fano manifolds (with a possibly singular, semi-positive, torus invariant twist)
\end{itemize}
Along more concrete lines, the results above are used in a subsequent paper \cite{DelExa} by the first author to exhibit the following examples:
 \begin{itemize}
\item An infinite family of Fano manifolds which admit coupled Kähler-Einstein metrics but no Kähler-Einstein metrics,
\item A family of manifolds which admit both Kähler-Ricci solitons and multiplier Hermitian structures of ‘log-polynomial’ type,
\item New examples of Fano manifold which don’t admit Mabuchi metrics.
\end{itemize}

\subsection{Detailed Background}
For the last 40 years, the study of Kähler-Einstein metrics, i.e. Kähler metrics $\omega$ that satisfy the Einstein condition 
$$ \Ric \omega = \lambda \omega $$
where $\Ric \omega$ is the Ricci curvature form of $\omega$ and $\lambda\in \bbR$ has been an important part of Kähler geometry. 
Existence of Kähler-Einstein metrics 
has turned out to be connected to subtle properties in algebraic geometry. By the Yau-Tian-Donaldson conjecture which was recently confirmed in \cite{CDS15a,CDS15b,CDS15c} and \cite{Tia15}, a compact Kähler manifold admits a Kähler-Einstein metric if and only if it is K-stable, an algebraic condition due to Tian and Donaldson which is modeled on Geometric Invariant Theory. 
This result has since been extended to twisted Kähler-Einstein metrics and Kähler-Ricci solitons \cite{DS16} and, very recently, to Mabuchi metrics \cite{HL20}. 
It should be said, however, that even in the light of \cite{CDS15a,CDS15b,CDS15c}, \cite{DS16} and \cite{HL20}, determining if a given manifold admits a (twisted) Kähler-Einstein metric, Kähler-Ricci soliton or a Mabuchi metrics is not a straight forward task. The condition of K-stability is not readily checkable. Consequently, along a line parallel to the Yau-Tian-Donaldson conjecture, much work has been done to find simple, explicit conditions for existence of Kähler-Einstein metrics on special classes of manifolds. A starting point this is the result of Wang and Zhu in 2004 \cite{WZ04}, saying that a toric Fano manifold admits a Kähler-Einstein metric if and only if the barycenter of the associated polytope vanishes. In addition, Wang and Zhu proved that any toric Fano manifold admit a Kähler-Ricci soliton. 

In this paper we study a general system of Monge-Ampère equations which incorporates the three generalizations of Kähler-Einstein metrics mentioned above as well as coupled Kähler-Einstein metrics (see Equation~\ref{eqn_general} below). Our main theorem (Theorem~\ref{thm:cpld_can_horo} below) is a necessary and sufficient condition for existence of solutions to this equation on horosymmetric manifolds, a class of manifolds introduced by the first named author in \cite{DelHoro}. This class is strictly included in the class of spherical manifolds and strictly includes the class of group compactifications. We deduce this theorem as a consequence of two independent results. 
The first of these is that Yau's higher order estimates along the continuity method for Monge-Ampère equations, as well as local solvability, apply to a suitable continuity path containing Equation~\ref{eqn_general} (see Theorem~\ref{thm:ReductionToC0} below). The second of these is that the $C^0$-estimates on toric manifolds proved by Wang and Zhu extends to the current setting of Equation~\ref{eqn_general} on horosymmetric manifolds. This is done by proving $C^0$-estimates along a continuity path of real Monge-Ampère equations on convex polyhedral cones (see Theorem~\ref{thm:C0Estimates} below). This generalizes and provides a simpler proof of a related $C^0$-estimate on group compactifications in \cite{DelKE}. As a byproduct we get necessary conditions for existence of solutions to these system of real Monge-Ampère equations. Moreover, these conditions are also sufficient in the cases when the data defining the cones is induced by horosymmetric manifolds. 

The paper is organized as follows: in the remaining of this introductory section, we state our main results. Section~\ref{sec:reductionC0} is devoted to the proof of Theorem~\ref{thm:ReductionToC0}. The proof of Theorem~\ref{thm:C0Estimates} is given in  Section~\ref{sec_C0}. We then explain in Section~\ref{sec:horosym} how these two results apply to the case of horosymmetric manifolds, after recalling the definitions and tools to deal with such manifolds. Finally, we include in Section~\ref{sec:examples} an illustration of our results on low dimensional horosymmetric manifolds.  

\subsection{Setup}\label{sec:setup}
We will now formulate the system of Monge-Ampère equations we will consider. Let $X$ be a Fano manifold of dimension $n$ and $K$ a compact subgroup of the automorphism group $\mathrm{Aut}(X)$ of $X$. 
For a positive integer $k$, fix $K$-invariant Kähler forms $\theta_1,\ldots,\theta_k$ and a $K$-invariant semi-positive (1,1)-form $\gamma$ such that
$$ [\gamma] + \sum_{i=1}^{k}[\theta_i]=c_1(X).$$ 
Let $\theta_0$ denote a Kähler form of unit mass such that 
$$\gamma + \sum_{i=1}^{k}\theta_i = \Ric(\theta_0).$$ 
Fix a $k$-tuple of (real) holomorphic vector fields $(V_i)$, commuting with the action of $K$ 
such that the subgroup generated by $JV_i$ lies in $K$ for each $i$ (where $J$ denotes the complex structure on $X$). 
Let $f_i$ denote the $K$-invariant (real valued) $\iddbar$-potentials for the Lie derivatives $L_{V_i}\theta_i$, 
normalized so that $\int_Xf_i\theta_i^n=0$. 
Let $(h_i)$ denote a $k$-tuple of real valued smooth concave functions 
on the real line which satisfy $\int_Xe^{h_i\circ f_i}\theta_i^n=1$.

We consider in this paper systems of equations whose solutions are $k$-tuples 
$(\phi_i) = (\phi_1,\ldots,\phi_k)$ of $K$-invariant real-valued functions on $X$ such that for all $i\in \{1,\ldots,k\}$,
$\theta_i+\iddbar \phi_i$ is Kähler and 
\begin{equation} 
\label{eqn_general}
e^{h_i(f_i+V_i(\phi_i))}(\theta_i+\iddbar \phi_i)^n=
e^{-\sum_{m=1}^k\phi_m}\theta_0^n. 
\end{equation}

When $\gamma=0$ and $k=1$, this equation reduces to an equation considered by Mabuchi in \cite{Mab03} and whose solutions define what he calls multiplier hermitian structures. Different choices of $h_1$ recover the definitions of Kähler-Einstein metrics (if $h_1$ is constant), Kähler-Ricci solitons (if $h_1$ is affine) 
and Mabuchi metrics (if $e^{h_1}$ is affine).

When $\gamma=0$, $k>1$ and $h_i$ is affine we get the system of equations defining coupled Kähler-Einstein metrics and coupled Kähler-Ricci solitons introduced in \cite{HWN17} and \cite{Hul}. Moreover, putting $\gamma=0$, $k>1$ and choosing $e^{h_i}$ affine we get a natural definition for \emph{coupled Mabuchi metrics}, generalizing both Mabuchi metrics and coupled Kähler-Einstein metrics. 

Finally, choosing a non-zero $\gamma$ allows to consider twisted versions of the above equations. 

In terms of the associated Kähler forms $\omega_i=\theta_i+\iddbar\phi_i$, \eqref{eqn_general} is equivalent to 
$$
\Ric \omega_i - \sqrt{-1} \partial \bar \partial h_i(f_i+V_i(\phi_i)) = \gamma + \sum_{m=1}^k \omega_m
$$
for $i\in \{1,\ldots,k\}$. If the functions $h_1,\ldots,h_k$ are invertible, and if $\gamma=0$ to simplify, a third formulation is given by considering the coupled Ricci potentials, i.e. functions $F_1,\ldots,F_k$ such that
$$ \iddbar F_i = \Ric \omega_i - \sum_{m=1}^k \omega_m. $$ 
Then $(\phi_i)$ solves \eqref{eqn_general} if and only if $h_i^{-1}\circ F_i$ is a $\iddbar$-potential of $L_{V_i}(\omega_i)$ for each $i$.

Note that in the case of Mabuchi metrics, it is implicitly part of our assumptions that the affine function $e^{h_1\circ f_1}$ is positive. This condition was identified by Mabuchi as a necessary condition for existence of Mabuchi metrics and it is actually of the same nature as the assumption that $X$ is Fano that we make to consider positive Kähler-Einstein metrics or Kähler-Ricci solitons.

\subsection{A continuity path and reduction to $C^0$-estimates}
\label{sec:intro_red}
We will consider the following continuity path for \eqref{eqn_general}: 
\begin{equation}
e^{h_i(f_i+V_i(\phi_i))}(\theta_i+\iddbar \phi_i)^n=
e^{-t\sum_{m=1}^k\phi_m}\theta_0^n. 
\label{eq:ComplexContPath}
\end{equation}
where $0\leq t \leq 1$. 
We must emphasize that Equation~\eqref{eq:ComplexContPath} is really a family of equations indexed by $t$, and as such solutions are families of tuples of functions $(\phi_1^t,\ldots,\phi_k^t)$ indexed by $t$. However, to simplify notations, we will in the following denote a tuple of solutions at a given $t$ by $(\phi_1,\ldots,\phi_k)$, omitting the index $t$. 
Note that given a solution $(\phi_1,\ldots,\phi_k)$ and constants $C_1,\ldots, C_k$ such that $\sum_{m=1}^k C_m=0$, we get a new solution by considering the tuple $(\phi_i+C_i)_{i=1}^k$.  Fixing a point $x_0\in X$ we will assume any solution $(\phi_1,\ldots,\phi_k)$ of \eqref{eq:ComplexContPath} to be normalized to satisfy 
\begin{equation}
    \label{eq:Normalization}
    \phi_1(x_0) = \ldots = \phi_k(x_0).
\end{equation}

Given a continuity path of the type \eqref{eq:ComplexContPath}, we will say that $C_0$-estimates hold on an interval $[t_0,t]\subset [0,1]$ if there exists a constant such that for any solution $(\phi_i)$ at $t'\in [t_0,t]$
\begin{equation}
        \label{eq:C0Assumption}
        \max_i\sup_X |\phi_i| < C.
\end{equation}
Using well-known techniques of Yau and Aubin we will reduce existence of solutions of \eqref{eqn_general} to a priori $C^0$-estimates for \eqref{eq:ComplexContPath}. 
\begin{thm}
    \label{thm:ReductionToC0}
    Let $t\in (0,1]$ and assume that for each $t_0\in (0,t)$, $C^0$-estimates hold for \eqref{eq:ComplexContPath} on $[t_0,t]$. Then \eqref{eq:ComplexContPath} has a solution for any $t'\in [0,t]$. In particular, if $t=1$ then \eqref{eqn_general} has a solution. 
\end{thm}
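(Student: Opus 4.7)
The plan is the standard continuity method. Define
\[
S \;=\; \{\, t' \in [0,t] \,:\, \eqref{eq:ComplexContPath} \text{ at parameter } t' \text{ admits a normalized smooth solution}\,\},
\]
and establish that $S$ is non-empty, open, and closed in $[0,t]$; the connectedness of $[0,t]$ will then force $S = [0,t]$. For non-emptiness I take $t' = 0$: there the right-hand side is independent of $(\phi_i)$, so the system decouples into $k$ independent equations
\[
e^{h_i(f_i + V_i(\phi_i))}(\theta_i + \iddbar \phi_i)^n = \theta_0^n,
\]
each of multiplier-Hermitian (Monge-Ampère with drift) type, whose smooth $K$-invariant solvability is classical (Yau for $h_i$ constant, Zhu and Mabuchi for the drift case), the normalizations $\int e^{h_i\circ f_i}\theta_i^n = 1 = \int \theta_0^n$ ensuring that total masses match. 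Imposing \eqref{eq:Normalization} then pins down the additive ambiguity and places $0 \in S$.

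For openness, at a solution $(\phi_i)$ at parameter $t_* \in S$ I apply the implicit function theorem in $\prod_i C^{2,\alpha}_K(X)$ modulo the $(k-1)$-dimensional subspace of constant tuples summing to zero, which is precisely what \eqref{eq:Normalization} kills. Linearizing the logarithm of \eqref{eq:ComplexContPath} at $(\phi_i)$, with $\omega_i = \theta_i + \iddbar\phi_i$, produces an operator whose $i$-th component is
\[
L(\psi)_i \;=\; \Delta_{\omega_i}\psi_i + h_i'(f_i + V_i(\phi_i))\,V_i(\psi_i) + t_* \sum_{m=1}^k \psi_m.
\]
Since both sides of \eqref{eq:ComplexContPath} represent the same measure $\mu := e^{-t_*\sum_m \phi_m}\theta_0^n$, pairing $L(\psi)_i$ against $\psi_i\,\mu$ and summing over $i$ gives, after integration by parts,
\[
-\sum_i \int |\nabla \psi_i|^2_{\omega_i}\,\mu + t_* \int \Bigl(\sum_i \psi_i\Bigr)^{2}\mu \;=\; 0,
\]
and an analysis in the spirit of \cite{HWN17, Hul} identifies the kernel of $L$ with the already-killed subspace, yielding Fredholm invertibility and hence openness.

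For closedness, let $t'_n \to t'_\infty \in [0,t]$ with normalized solutions $(\phi_i^n)$. If $t'_\infty = 0$ then $t'_\infty \in S$ by non-emptiness. Otherwise I fix any $t_0 \in (0, t'_\infty)$; the hypothesis furnishes $\max_i \sup_X |\phi_i^n| \leq C$ uniformly for $n$ large. Each individual equation in \eqref{eq:ComplexContPath} is then a complex Monge-Ampère equation for $\phi_i^n$ alone whose right-hand side $e^{-t'_n \sum_m \phi_m^n - h_i(f_i + V_i(\phi_i^n))}\theta_0^n$ is uniformly bounded above and away from zero, via the $C^0$-bound on all $(\phi_m^n)$ and an interior-maximum control on $V_i(\phi_i^n)$. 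Yau's second-order estimate, adapted to the drift as in the Kähler-Ricci soliton setting, then gives a uniform Laplacian bound; Evans-Krylov-Trudinger upgrades this to uniform $C^{2,\alpha}$ bounds; and elliptic bootstrap gives $C^k$-bounds for every $k$. A subsequence converges in $C^\infty$ to a solution at $t'_\infty$, so $t'_\infty \in S$.

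The main obstacle is the closedness step: one must verify that Yau's Laplacian estimate and the subsequent Evans-Krylov regularity go through uniformly in $t' \in [t_0, t]$ despite the concave composition $h_i\circ(f_i + V_i(\phi_i))$ and the coupling factor $\exp(-t'\sum_m \phi_m)$. The concavity of $h_i$ is what supplies the correct sign in Yau's inequality (as in the soliton case), and the coupling appears in the $i$-th equation only through a uniformly $C^0$-bounded multiplicative factor, so the a priori estimates decouple into $k$ essentially classical single-equation problems.
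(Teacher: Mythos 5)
Your overall strategy --- continuity in $t$, openness via the implicit function theorem, closedness via Yau's Laplacian estimate with the concavity of $h_i$ supplying the good sign, then Evans--Krylov and bootstrap --- is the same as the paper's, and those parts are sound modulo standard details. The genuine gap is the non-emptiness step. At $t'=0$ the decoupled equation
\[
e^{h_i(f_i+V_i(\phi_i))}(\theta_i+\iddbar\phi_i)^n=\theta_0^n
\]
is classical only for $h_i$ constant (Yau) or affine (Zhu \cite{Zhu00}); the theorem is stated for an \emph{arbitrary} smooth concave $h_i$, and there is no off-the-shelf existence result for the corresponding ``Calabi problem with general concave drift'' that you can simply cite. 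The paper closes exactly this hole by introducing the two-parameter family \eqref{eq:GenComplexContPath} with an extra parameter $s$ multiplying $h_i$: it starts from the Calabi--Yau solution at $(t,s)=(0,0)$, deforms $s$ from $0$ to $1$ along $t=0$ (openness in $s$ uses the same linearization, and the required $C^0$ estimate at $t=0$ is proved separately, exploiting the invariance of the $t=0$ system under additive constants together with Tian's argument \cite{Tia96}), and only then increases $t$ at $s=1$. If you want to dispense with the auxiliary parameter, you must supply an existence proof for the drift Calabi equation with general concave $h_i$, which amounts to the same work.

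A second, smaller issue: in the openness step, the quadratic identity obtained by pairing $L(\psi)_i$ against $\psi_i\,\mu$ and summing does not by itself identify the kernel. You still need the weighted Lichnerowicz-type statement that a nonconstant solution of $\Delta_{\omega_i,g_i}v_i=\lambda\sum_m v_m$ forces $\lambda>t_*$, and this rests on a Bochner--Weitzenb\"ock inequality combined with the Ricci identity satisfied by $(\omega_i)$ along the continuity path (the paper's Lemma~\ref{lemma:HInjective}, following \cite{Hul} and \cite{Pin18}). You point to the right references, but this is where the work of openness actually lives; it is also why openness is only available for $t_*<1$ --- harmless here, since the connectedness argument only needs openness on $[0,t)$ and closedness on $[0,t]$, the latter being exactly what the hypothesis of the theorem provides.
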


\subsection{Canonical metrics of Kähler-Einstein type on horosymmetric manifolds} 
\label{sec:intro_horo}

On a manifold $X$ equipped with an automorphism group of large dimension, it often turned out that complex Monge-Ampère equations such as the Kähler-Einstein equation could be translated into a real Monge-Ampère equation with data encoded by combinatorial information on the action of $\mathrm{Aut}(X)$. 
The major example is that of toric manifolds \cite{WZ04} but several bigger such classes of manifolds were studied over the years, notably homogeneous toric bundles \cite{PS10} and group compactifications \cite{DelKE}. 
Horosymmetric manifolds were introduced by the first author in \cite{DelHoro} as a generalization of the above classes, and the tools to translate complex Monge-Ampère equations into real Monge-Ampère equations were developed in the same paper. 

In the present article, we will study our general system of complex Monge-Ampère equations \eqref{eqn_general} on horosymmetric manifolds. 
It yields a complete combinatorial characterization of existence of solutions (under a few additional mild assumptions such as invariance of the data under a compact subgroup).
In particular we recover and generalize a wide array of recent results on existence of canonical metrics. 

A horosymmetric manifold is a manifold $X$ equipped with an action of a connected linear reductive complex group $G$ such that the action admits an open orbit $G\cdot x$ which is a homogeneous bundle over a generalized flag manifold with fiber a complex symmetric space. 
Several combinatorial data were associated to such manifolds and their line bundles in \cite{DelHoro}. 
For this introduction we just need the following. 
We choose (wisely) a maximal torus $T$ and Borel subgroup $T\subset B$ of $G$. 
The lattice $\mathcal{M}=\mathfrak{X}(T/T_x) \subset \mathfrak{X}(T)$ is called the spherical lattice.
The root system of $G$ splits into three (possibly empty) parts 
$\Phi^+=\Phi_{Q^u}\cup \Phi_s^+\cup (\Phi_L^+)^{\sigma}$
(see Section~\ref{sec:horosym} for details).
Define $\bar{C}^+$ as the cone of elements $p\in\mathcal{M}\otimes \bbR$ such that $\kappa(p,\alpha)\geq 0$ for all $\alpha\in\Phi_s^+$, where $\kappa$ denotes the Killing form. 

To a nef class $[\gamma]$ on the horosymmetric manifold $X$ we associate, by elaborating on \cite{DelHoro}, a \emph{toric polytope} $\Delta^{\tor}_{\gamma}\subset \mathcal{M}\otimes \bbR$ (well defined up to translation by an element of $\mathfrak{X}(T/[G,G])\otimes \bbR$) and an isotropy character $\chi_i\in \mathfrak{X}(T)$. 
Furthermore, for the anticanonical line bundle, there is a canonical choice of representative $\Delta^{\tor}_{ac}$ among the possible toric polytopes. 

We now consider the setup of Section~\ref{sec:setup} on a horosymmetric Fano manifold, with the additional assumption that the $\theta_i$ and $\gamma$ are invariant under a fixed maximal compact subgroup $K$ of $G$, that the vector fields $V_i$ commute with the action of $G$, and that the classes $[\theta_i]$ and $[\gamma]$ are in the subspace generated by semiample line bundles whose restriction to the fiber of the open orbit are trivial.
Note that horosymmetric manifolds are Mori Dream Spaces and as such their nef cone is generated by classes of line bundles. The subspace defined above always contain the anticanonical class and is equal to the full nef cone as long as the symmetric fiber does not have any Hermitian factor. 
The assumption on the vector fields allows to associate to $V_i$ an affine function $\ell_i$ on $\mathcal{M}\otimes \bbR$ (see Section~\ref{sec:HamiltHoro}). 

Using the assumption $[\gamma]+\sum_{i=1}^k[\theta_i]=c_1(X)$ we may and do choose the toric polytopes of each class so that the Minkowski sum $\Delta^{\tor}_{\gamma}+\sum_{i=1}^k\Delta_{\theta_i}^{\tor}$ is equal to the canonical $\Delta^{\tor}_{ac}$.
For the statement of the main theorem, we introduce the following notations. 
The Duistermaat-Heckman polynomial $P_{DH}$ is defined on $\mathfrak{X}(T)\otimes\bbR$ by 
\[ P_{DH}(q):=\prod_{\alpha\in\Phi_{Q^u}\cup \Phi_s^+}\kappa(\alpha,q). \]
Let $2\rho_H=\sum_{\alpha\in\Phi_{Q^u}\cup \Phi_s^+}\alpha - \chi_{ac}$ where $\chi_{ac}$ is the isotropy character of the anticanonical line bundle. Define the $\ell_i$-modified Duistermaat-Heckman barycenters by 
\begin{equation} 
\label{eq:defn_bar}
\mathbf{bar}^{DH}_i= \int_{\Delta^{\tor}_{\theta_i}\cap \bar{C}^+} p e^{h_i\circ \ell_i (p)}P_{DH}(\chi_i+p)dp 
\end{equation}
where $dp$ is the Lebesgue measure normalized so that $\int_{\Delta^{\tor}_{\theta_i}\cap \bar{C}^+} e^{h_i\circ \ell_i (p)}P_{DH}(\chi_i+p)dp=1$.

We are now ready to state our main result.
\begin{thm}
\label{thm:cpld_can_horo}
On a Fano horosymmetric manifold $X$, there exists a solution to the system \eqref{eq:ComplexContPath} if and only if 
\[ 0 \in \mathrm{Relint}\left( t\sum_{i=1}^k \mathbf{bar}^{DH}_i+(1-t)\sum_{i=1}^k \Delta^{\tor}_{\theta_i} +\Delta^{\tor}_{\gamma}-2\rho_H+-(\bar{C}^+)^{\vee} \right) \]
\end{thm}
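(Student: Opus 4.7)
The strategy is to reduce the complex Monge-Ampère system on $X$ to a system of real Monge-Ampère equations on the convex polyhedral cone $\bar{C}^+$, and then invoke the two technical theorems announced in the introduction.

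For the reduction, I would use the horosymmetric machinery of \cite{DelHoro} recalled in Section~\ref{sec:horosym}: any $K$-invariant $\theta_i$-psh function $\phi_i$ is encoded by a convex function $u_i$ on $\mathcal{M}\otimes\bbR$ whose asymptotic behaviour in the directions of $\bar{C}^+$ is controlled by the toric polytope $\Delta^{\tor}_{\theta_i}$ shifted by the isotropy datum. Under this dictionary the measure $(\theta_i+\iddbar\phi_i)^n$ restricts on the relevant slice to $P_{DH}(\xi_i+\nabla u_i)\MAR{u_i}\,dp$, the potential $f_i+V_i(\phi_i)$ becomes $\ell_i(\xi_i+\nabla u_i)$, and the right-hand side $e^{-t\sum_m\phi_m}\theta_0^n$ is expressed in terms of $\sum_m u_m$ together with the Ricci potential of $\theta_0$ on the slice -- the latter contributing the $-2\rho_H$ shift and the wall factor responsible for $-(\bar{C}^+)^{\vee}$ in the statement. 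Consequently \eqref{eq:ComplexContPath} becomes exactly a system of real Monge-Ampère equations on a polyhedral cone of the form treated in Section~\ref{sec_C0}, and the barycenters $\mathbf{bar}^{DH}_i$ of \eqref{eq:defn_bar} are precisely the barycenters associated with that real system.

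For sufficiency, assume the combinatorial condition holds at the parameter $t$. I would verify that this is exactly the hypothesis under which Theorem~\ref{thm:C0Estimates} delivers $C^0$-estimates for the real system along any closed subinterval $[t_0,t]\subset (0,t]$; translated back to $X$ these give $C^0$-estimates for $(\phi_i)$ along the same interval. Theorem~\ref{thm:ReductionToC0} then yields solutions of \eqref{eq:ComplexContPath} for every $t'\in[0,t]$, and specialising to $t=1$ produces a solution of \eqref{eqn_general}. For necessity, given a solution $(\phi_i)$ at parameter $t$, passing to the real side and integrating each equation against affine functionals -- with integration by parts controlled by $P_{DH}$ and the boundary contribution from the walls of $\bar{C}^+$ -- forces a zero-barycenter identity. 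At $t=1$ the $e^{h_i\circ\ell_i}$ twist converts $\sum_i\Delta^{\tor}_{\theta_i}$ into $\sum_i\mathbf{bar}^{DH}_i$, while for intermediate $t$ the $e^{-t\sum_m\phi_m}$ factor interpolates between the two, producing the full Minkowski sum in the statement. Openness of the set of parameters admitting a solution, obtained by the implicit function theorem at any solution, then upgrades the inclusion to relative interior membership.

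The main obstacle is not conceptual but bookkeeping: the three contributions -- the $-2\rho_H$ coming from the Ricci potential of the horosymmetric anticanonical measure, the shifts $\xi_i$ built into the definition of $\mathbf{bar}^{DH}_i$, and the dual cone $-(\bar{C}^+)^{\vee}$ from the wall asymptotics on $\bar{C}^+$ -- must be matched precisely between Sections~\ref{sec:horosym} and \ref{sec_C0}. The canonical normalisation $\sum_i\Delta^{\tor}_{\theta_i}+\Delta^{\tor}_{\gamma}=\Delta^{\tor}_{ac}$ is essential for these three shifts to combine correctly into the single Minkowski sum in the theorem, and handling this matching carefully (especially for non-trivial $\gamma$ or non-affine $h_i$) is where the real work lies.
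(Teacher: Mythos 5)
Your proposal follows the paper's proof exactly: translate \eqref{eq:ComplexContPath} into the real Monge--Ampère system via Theorem~\ref{thm:translation_horo}, then combine Theorem~\ref{thm:ReductionToC0} with the estimates and the obstruction part of Theorem~\ref{thm:C0Estimates}, the condition $(\ddagger_t)$ becoming the stated relative-interior condition after identifying $\lalg{a}^+=\lalg{a}_s^-$ and $\Delta_i=-2(\Delta^{\tor}_{\theta_i}\cap\bar{C}^+)$. The one substantive step the paper records that your plan leaves implicit under ``bookkeeping'' is the verification that the horosymmetric data satisfy Assumption~\ref{Jassumption}: integrability of $G_i^{-\epsilon}$, the formula $j_{\infty}=v_{-2(\Delta_{\gamma}^{\tor}\cap\bar{C}^+)}+2\sum_{\alpha\in\Phi_{Q^u}\cup\Phi_s^+}\alpha$, boundedness below of $j-j_{\infty}$ via $\ln\sinh(x)\leq x$, and properness from $2\rho_H\in\mathrm{Int}(2\Delta_{ac}^{\tor})$.
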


As a consequence, we obtain numerous generalizations or alternate proofs of recent results, and we shall illustrate this with a few corollaries. 
Let us first consider the non-coupled and non-twisted case (so $\Delta_{\theta_1}^{\tor}=\Delta_{ac}^{\tor}$), and first the case when $h_1$ is constant. 
We obtain the following generalization of \cite{WZ04,PS10,Li11,DelKE,Yao17} 
as well as an alternate proof to a particular case of \cite{DelKSSV}.
\begin{cor}
A horosymmetric Fano manifold $X$ is Kähler-Einstein if and only if 
$\mathbf{bar}^{DH}_1-2\rho_H\in \mathrm{Relint}((\bar{C}^+)^{\vee})$.
The greatest Ricci lower bound $R(X)$ of a horosymmetric Fano manifold $X$ is equal to 
\[ \sup\left\{t\in ]0,1[~;~ 2\rho_H+\frac{t}{1-t}(2\rho_H-\mathbf{bar}^{DH}_1) \in \mathrm{Relint}(\Delta_{ac}^{\tor}-(\bar{C}^+)^{\vee}) \right\} \]
\end{cor}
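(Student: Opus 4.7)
The plan is to obtain both statements as direct specializations of Theorem~\ref{thm:cpld_can_horo}. Both correspond to the non-coupled case ($k=1$), with $\gamma=0$, and with $h_1$ constant, which (after translation) forces the soliton vector field $V_1$ to vanish, so that the affine function $\ell_1$ on $\mathcal{M}\otimes\bbR$ is identically zero. Under these assumptions $\Delta^{\tor}_{\theta_1}=\Delta^{\tor}_{ac}$, $\Delta^{\tor}_{\gamma}=\{0\}$, and the barycenter $\mathbf{bar}^{DH}_1$ of \eqref{eq:defn_bar} reduces to the standard Duistermaat–Heckman barycenter of $\Delta^{\tor}_{ac}\cap\bar{C}^+$ weighted by the polynomial $P_{DH}(\xi_1+p)$.

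For the Kähler–Einstein statement, I would set $t=1$ in the condition of Theorem~\ref{thm:cpld_can_horo}. The equation \eqref{eq:ComplexContPath} at $t=1$ is \eqref{eqn_general}, which here reads $(\theta_1+\iddbar\phi)^n = C e^{-\phi}\theta_0^n$ with $\Ric(\theta_0)=\theta_1\in c_1(X)$, i.e. the Kähler–Einstein equation. The criterion reduces to
\[ 0\in \mathrm{Relint}\bigl(\mathbf{bar}^{DH}_1-2\rho_H-(\bar{C}^+)^{\vee}\bigr), \]
which, since $(\bar{C}^+)^{\vee}$ is a cone and taking relative interior commutes with negation, is equivalent to $\mathbf{bar}^{DH}_1-2\rho_H\in\mathrm{Relint}((\bar{C}^+)^{\vee})$.

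For the greatest Ricci lower bound, I would invoke the standard description of $R(X)$ as the supremum of $t\in(0,1)$ such that the Aubin continuity path $\Ric(\omega)=t\omega+(1-t)\theta_1$ is solvable; in the present notation this is exactly \eqref{eq:ComplexContPath} with $h_1$ constant and $k=1$. Openness of the solvability interval (from Theorem~\ref{thm:ReductionToC0}) and the customary argument that the set of admissible $t$ is an interval $[0,t^*)$ let me identify $R(X)$ with $\sup\{t : \eqref{eq:ComplexContPath}\text{ has a solution}\}$. Applying Theorem~\ref{thm:cpld_can_horo} and multiplying through by $\frac{1}{1-t}$ gives
\[ \frac{2\rho_H-t\,\mathbf{bar}^{DH}_1}{1-t} = 2\rho_H+\frac{t}{1-t}\bigl(2\rho_H-\mathbf{bar}^{DH}_1\bigr) \in \mathrm{Relint}\bigl(\Delta^{\tor}_{ac}-(\bar{C}^+)^{\vee}\bigr), \]
which is the stated characterization.

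The main conceptual point to verify carefully is that the supremum of $t$ arising from Theorem~\ref{thm:cpld_can_horo} genuinely equals $R(X)$ in the usual sense; this requires the openness of the solvability set (guaranteed by Theorem~\ref{thm:ReductionToC0}) and the connectedness/monotonicity of the continuity path. The other technical bookkeeping—confirming that $V_1=0$ is forced when $h_1$ is constant so the modified barycenter is the unmodified one, and that all Minkowski-sum/translation conventions of $\Delta^{\tor}_{\theta_1}$ and $\Delta^{\tor}_{ac}$ match—is routine once the framework of Section~\ref{sec:horosym} is in place.
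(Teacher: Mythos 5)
Your proposal is correct and is essentially the paper's own (unwritten) argument: the corollary is stated there as a direct specialization of Theorem~\ref{thm:cpld_can_horo} to $k=1$, $\gamma=0$, $h_1$ constant, combined with Székelyhidi's identification of $R(X)$ with the supremum of solvable times along the Aubin path, and your algebraic manipulation of the relative-interior condition matches the stated formulas. (Minor quibble: $h_1$ constant does not force $V_1=0$; it simply makes the weight $e^{h_1\circ\ell_1}$ constant, which is all that is needed for the barycenter to reduce to the unweighted Duistermaat--Heckman one.)
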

The variant for Kähler-Ricci solitons and greatest Bakry-Emery-Ricci lower bounds follow from allowing $h_1$ to be an affine function. Note that the possible affine function is fully determined up to constant by the conditions ensuring that the translated polytope lies in the linear span of $(\bar{C}^+)^{\vee}$. 

The case of Mabuchi metrics (see \cite{Mab01}) is solved by allowing $h_1$ to be the logarithm of an affine function, and we obtain a generalization and alternate proof of \cite{Yao, LZ17}. 
One can actually formulate the problem purely in terms of the polytope:

\begin{cor}
The horosymmetric Fano manifold $X$ admits a Mabuchi metric if and only there exists $\xi\in (\mathfrak{X}(T/[G,G])\otimes \bbR)^*$ such that replacing $e^{h_1\circ\ell_1}$ by $\xi+C$ in the definition of the barycenter (for an appropriate normalizing constant $C$), 
both of the following conditions are satisfied:
\begin{itemize}
    \item $\xi(p)+C>0$ on $\Delta_{ac}^{\tor}\cap \bar{C}^+$, and 
    \item $\mathbf{bar}^{DH}_1-2\rho_H\in \mathrm{Relint}((\bar{C}^+)^{\vee})$.
\end{itemize}
\end{cor}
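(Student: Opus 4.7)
The plan is to read off the corollary from Theorem~\ref{thm:cpld_can_horo} in the Mabuchi specialization ($k=1$, $\gamma=0$, $e^{h_1}$ affine, $t=1$), translating Mabuchi's assumptions into the polyhedral language developed in Section~\ref{sec:horosym}.

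First, I would invoke the horosymmetric Hamiltonian formalism (Section~\ref{sec:HamiltHoro}). Since $V_1$ is required to commute with the $G$-action, its moment-type potential $f_1$ corresponds on $\mathcal{M}\otimes\bbR$ to an affine function $\ell_1$ whose linear part lies in $(\mathfrak{X}(T/[G,G])\otimes\bbR)^*$ and whose constant part is pinned down by the normalization $\int_X f_1\,\theta_1^n=0$. Under this dictionary, $K$-invariant scalar functions of $f_1$ on $X$ correspond to the same functions of $\ell_1$ on the polytope $\Delta_{ac}^{\tor}\cap\bar C^+$, and more importantly the image of $f_1$ is exactly this polytope.

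Second, I would translate the Mabuchi data. The assumption that $e^{h_1}$ is affine means $e^{h_1\circ f_1}$ is an affine function of $f_1$, hence corresponds on the polytope to an affine function of $\ell_1$, which is therefore of the form $\xi(p)+C$ with $\xi\in(\mathfrak{X}(T/[G,G])\otimes\bbR)^*$ and $C\in\bbR$. The mass normalization $\int_Xe^{h_1\circ f_1}\theta_1^n=1$ translates, via the Duistermaat--Heckman-type pushforward used in the definition of $P_{DH}$, to the normalization of $dp$ implicit in \eqref{eq:defn_bar}. Mabuchi's positivity requirement $e^{h_1\circ f_1}>0$, which is needed for $h_1=\log(\xi+C)$ to be well defined on the image of $f_1$ and which Mabuchi identified as a necessary condition for existence in \cite{Mab01}, is equivalent to $\xi+C>0$ on $\Delta_{ac}^{\tor}\cap\bar C^+$. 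Conversely, any $(\xi,C)$ satisfying this positivity comes from a valid Mabuchi choice of $(V_1,h_1)$: $V_1$ is the $G$-invariant holomorphic vector field whose moment has linear part $\xi$, and $h_1$ is the logarithm precomposed with the appropriate affine change of variable.

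Third, substituting $\xi+C$ for $e^{h_1\circ\ell_1}$ in \eqref{eq:defn_bar}, Theorem~\ref{thm:cpld_can_horo} at $t=1$ with $k=1$, $\gamma=0$ and $\Delta^{\tor}_{\theta_1}=\Delta_{ac}^{\tor}$ reduces exactly to $\mathbf{bar}^{DH}_1-2\rho_H\in\mathrm{Relint}((\bar C^+)^\vee)$, which combined with the positivity of $\xi+C$ gives the two bullets of the corollary. The main technical point is the equivariant Hamiltonian translation in the second step: verifying that commutation of $V_1$ with $G$ descends $\xi$ to the smaller dual $(\mathfrak{X}(T/[G,G])\otimes\bbR)^*$ rather than to an arbitrary element of $(\mathcal{M}\otimes\bbR)^*$, and that every positive affine function on $\Delta_{ac}^{\tor}\cap\bar C^+$ of this form is realized by admissible Mabuchi data $(V_1,h_1)$. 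Once this dictionary is set up, the corollary is a direct transcription of Theorem~\ref{thm:cpld_can_horo}.
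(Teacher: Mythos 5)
Your proposal is correct and follows essentially the route the paper intends: the corollary is the specialization of Theorem~\ref{thm:cpld_can_horo} to $k=1$, $\gamma=0$, $t=1$ with $e^{h_1}$ affine, the positivity bullet being exactly Mabuchi's admissibility condition on $e^{h_1\circ f_1}$ already flagged in Section~\ref{sec:setup}, and the dictionary between $(V_1,h_1)$ and $(\xi,C)$ being the Hamiltonian correspondence of Section~\ref{sec:HamiltHoro}. Nothing further is needed.
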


Finally, consider the case of coupled Kähler-Ricci solitons (see \cite{HWN17}, \cite{Pin18} and \cite{FZ}) on horosymmetric manifolds. We obtain the following generalization of the second author's existence result for toric manifolds \cite{Hul}. We place ourselves in the general coupled but not twisted setting, and assume that all the $h_i$ are affine. Again we may forget the $h_i$ and $\ell_i$ to formulate the problem only in terms of the polytope and the data of elements $\xi_i\in (\mathfrak{X}(T/[G,G])\otimes \bbR)^*$ and normalizing constants $C_i$.
\begin{cor}
The decomposition $\sum_{i=1}^k \theta_i=c_1(X)$ admits coupled Kähler-Ricci solitons if and only if there exists $k$ elements $\xi_1,\ldots, \xi_k \in (\mathfrak{X}(T/[G,G])\otimes \bbR)^*$ such that 
$\sum_{i=1}^k \mathbf{bar}_i^{DH}-2\rho_H\in \mathrm{Relint}((\bar{C}^+)^{\vee})$ where $h_i\circ\ell_i$ is replaced by $\xi_i+C_i$ in the definition of the , with appropriate normalizing constants $C_i$.
\end{cor}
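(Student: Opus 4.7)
The plan is to derive this corollary as a direct specialization of Theorem~\ref{thm:cpld_can_horo}. Setting $t=1$ recovers the original system \eqref{eqn_general}, and the coupled Kähler-Ricci soliton setting corresponds to $\gamma=0$ (untwisted) together with each $h_i$ an affine function. Under these choices the polytope $\Delta^{\tor}_\gamma$ degenerates to a point (the identity for Minkowski addition), and the main condition of Theorem~\ref{thm:cpld_can_horo} collapses to
\[
0 \in \mathrm{Relint}\Bigl(\sum_i \mathbf{bar}^{DH}_i - 2\rho_H - (\bar{C}^+)^{\vee}\Bigr).
\]
Since $(\bar{C}^+)^{\vee}$ is a closed convex cone, the elementary fact that for a single point $v$ one has $0 \in \mathrm{Relint}(v - (\bar{C}^+)^{\vee})$ if and only if $v \in \mathrm{Relint}((\bar{C}^+)^{\vee})$ then reduces this to the stated condition $\sum_i \mathbf{bar}^{DH}_i - 2\rho_H \in \mathrm{Relint}((\bar{C}^+)^{\vee})$.

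The second step is to identify, under the affine assumption on each $h_i$, the composition $h_i\circ\ell_i$ with an arbitrary affine function $\xi_i + C_i$ parameterized as in the statement. The vector field $V_i$ commutes with the $G$-action and generates a one-parameter subgroup of $K$, which by the construction in Section~\ref{sec:HamiltHoro} associates to it an affine function $\ell_i$ on $\mathcal{M}\otimes\bbR$ whose linear part lies in $(\mathfrak{X}(T/[G,G])\otimes\bbR)^*$. Composition with the affine $h_i$ gives an affine function of the form $\xi_i + C_i$; conversely, by absorbing the slope of $h_i$ into a rescaling of $V_i$, any such affine function arises from a unique admissible $V_i$ together with $h_i$ the identity up to additive constant. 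The constant $C_i$ is then pinned down by the normalization $\int_X e^{h_i\circ f_i}\theta_i^n = 1$, providing the "appropriate normalizing constants" in the statement.

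Finally, by definition the decomposition admits coupled Kähler-Ricci solitons exactly when there exist admissible soliton vector fields $V_i$ together with potentials $\phi_i$ solving the soliton equations. For each fixed tuple $(V_i)$---equivalently each fixed tuple $(\xi_i)$---Theorem~\ref{thm:cpld_can_horo} translates the solvability for the $\phi_i$ into the barycenter condition derived above; existentially quantifying over $(\xi_i)$ then yields precisely the statement of the corollary. The main substance of the argument is contained in Theorem~\ref{thm:cpld_can_horo} itself; the only remaining point requiring care, though not difficulty, is verifying that the passage between admissible soliton data $(V_i, h_i)$ and affine functions on the polytope is a bijection modulo the prescribed normalization, which is a direct consequence of the dictionary recalled in Section~\ref{sec:HamiltHoro}.
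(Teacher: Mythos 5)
Your proposal is correct and matches the paper's (largely implicit) derivation: the corollary is obtained exactly by specializing Theorem~\ref{thm:cpld_can_horo} to $t=1$, $\gamma=0$ and affine $h_i$, noting that $\Delta^{\tor}_{\gamma}$ reduces to a point so the relative-interior condition becomes $\sum_i\mathbf{bar}^{DH}_i-2\rho_H\in\mathrm{Relint}((\bar{C}^+)^{\vee})$, and then reparametrizing the pairs $(V_i,h_i)$ by affine functions $\xi_i+C_i$ on the polytope before quantifying existentially over the $\xi_i$. No gaps.
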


Finally we note that there are new results in the toric case. 
The obvious new results are for the coupled canonical metrics we have defined in the present paper of course, but it seems also noticeable that our theorem, in the simpler case of twisted Kähler-Einstein metrics, was never observed and proved before. 
\begin{cor}
Let $X$ be toric Fano manifold. Let $\gamma$ be a $K$-invariant semipositive $(1,1)$-form and $\theta$ a Kähler form such that $\theta+\gamma\in c_1(X)$. 
Let $\Delta_{\gamma}$ be the polytope associated to $\gamma$ and let $\mathbf{bar}$ be the barycenter of the moment polytope of $[\theta]$. Then there exists a solution $\omega\in [\theta]$ to the twisted Kähler-Einstein equation $\Ric(\omega)=\omega+\gamma$ if and only if $0\in \mathrm{Relint}(\mathbf{bar}+\Delta_{\gamma})$.
\end{cor}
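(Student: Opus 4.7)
The plan is to deduce this corollary as a direct specialization of Theorem~\ref{thm:cpld_can_horo}. First I would verify that a toric Fano manifold with its acting torus is a horosymmetric manifold in the sense of Section~\ref{sec:horosym}, with $G=T$ abelian, trivial unipotent radical, trivial symmetric fiber and trivial flag part. Then I would carefully specialize each combinatorial ingredient appearing in Theorem~\ref{thm:cpld_can_horo} to this setting.

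Concretely, since the root system of a torus is empty, the three sets $\Phi_{Q^u}$, $\Phi_s^+$ and $(\Phi_L^+)^\sigma$ are all empty. This has several consequences that I would spell out: the Duistermaat-Heckman polynomial $P_{DH}$ is an empty product, hence identically equal to $1$; the half-sum $2\rho_H=0$; the cone $\bar{C}^+$, being cut out by an empty family of inequalities, equals the whole space $\mathcal{M}\otimes\bbR$; consequently its dual cone $(\bar{C}^+)^\vee$ reduces to $\{0\}$, so intersecting $\Delta^{\tor}_{\theta}$ with $\bar{C}^+$ does nothing and subtracting $(\bar{C}^+)^\vee$ in the main inclusion does nothing either. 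Moreover, the toric polytope $\Delta^{\tor}_{\theta}$ constructed in Section~\ref{sec:horosym} coincides with the usual moment polytope of $[\theta]$, and similarly for $\gamma$.

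Next I would set up the data of Section~\ref{sec:setup} that realizes the twisted Kähler-Einstein equation: take $k=1$, $V_1=0$ (so $\ell_1\equiv 0$ and $f_1\equiv 0$), and $h_1$ the constant function chosen so that $\int_X e^{h_1\circ f_1}\theta_1^n=1$. With these choices, equation \eqref{eqn_general} at $t=1$ becomes $(\theta+\iddbar\phi)^n=e^{-\phi}\theta_0^n$ with $\Ric(\theta_0)=\gamma+\theta$, which is the standard Monge-Ampère formulation of $\Ric(\omega)=\omega+\gamma$ for $\omega=\theta+\iddbar\phi$. Under these specializations, the modified barycenter defined in \eqref{eq:defn_bar} collapses to the ordinary Lebesgue barycenter $\mathbf{bar}$ of the moment polytope of $[\theta]$.

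Finally, I would plug all of these simplifications into the inclusion in Theorem~\ref{thm:cpld_can_horo} at $t=1$:
\[ 0 \in \mathrm{Relint}\left( \mathbf{bar}^{DH}_1+\Delta^{\tor}_{\gamma}-2\rho_H-(\bar{C}^+)^{\vee} \right), \]
which reduces to $0\in\mathrm{Relint}(\mathbf{bar}+\Delta_\gamma)$, giving the corollary. The only genuine work is bookkeeping: checking that the horosymmetric polytopes, the normalization of $h_1$, and the convention for the Ricci potential $f_1$ all match up with the ordinary toric conventions. No real obstacle is expected here, since the content of the corollary is essentially a dictionary check, the analytic and combinatorial hard work being fully absorbed into Theorem~\ref{thm:cpld_can_horo}.
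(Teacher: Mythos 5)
Your proposal is correct and is exactly the derivation the paper intends: the corollary is stated as a direct specialization of Theorem~\ref{thm:cpld_can_horo} at $t=1$, $k=1$, $V_1=0$, $h_1$ constant, with the empty root system of the torus forcing $P_{DH}\equiv 1$, $2\rho_H=0$, $\bar{C}^+=\mathcal{M}\otimes\bbR$ and $(\bar{C}^+)^\vee=\{0\}$, so that the inclusion collapses to $0\in\mathrm{Relint}(\mathbf{bar}+\Delta_\gamma)$. The bookkeeping you flag (toric polytope equals moment polytope since the restricted Weyl group is trivial and the isotropy character lives in $\mathfrak{X}(T/T_s)\otimes\bbR=0$, and Assumption~\ref{assumption_isotropy} is vacuous for a torus) is all that remains, and it checks out.
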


In the last section of the article we will illustrate these corollaries on some low dimensional examples of horosymmetric manifolds. 
We take advantage of this occasion to determine the best horosymmetric structure on Fano threefolds, that is, given a horosymmetric Fano threefold $X$, we determine the largest connected reductive subgroup $G$ of its automorphism group such that $X$ is horosymmetric under the action of $G$. Most are toric, but admit a lower rank horospherical structure, and some are symmetric but not horospherical. We obtain that way new examples of Fano threefolds with Mabuchi metrics or without Mabuchi metrics. 

We investigate on one rank one horospherical example the existence of a different type of canonical metric: instead of taking $h_1$ to be the logarithm of an affine function as in the case of Mabuchi metrics, we allow an integral multiple of such. In other words, we consider the case when $e^{h_1}$ is a power of an affine function. This provides an interpolation between the case of Mabuchi metrics and the case of Kähler-Ricci solitons. We illustrate on the example the fact that there may be no Mabuchi metrics, but a canonical metric for $e^{h_1}$ the square of an affine function. 

Finally, we consider two rank one horospherical Fano fourfolds which admit coupled Kähler-Einstein pairs but no single Kähler-Einstein metric. For one, it is just the horospherical point of view on the second author's example \cite{Hul}, and the other is a natural variant. 

\subsection{Real Monge-Ampère equations on polyhedral cones}
As mentioned in the previous section, on horosymmetric manifolds \eqref{eq:ComplexContPath} reduces to a real Monge-Ampère equation. One main ingredient in the proof of Theorem~\ref{thm:cpld_can_horo} is an a priori $C^0$-estimate for these real Monge-Ampère equations. The convex geometric setting we will use to state these $C^0$-estimates is a slight generalization of the one proposed by horosymmetric manifolds. 

Let $\lalg{a}$ be a (real) vector space, $r=\dim \lalg{a}$ and  $\lalg{a}^+$ be a convex polyhedral cone in $\lalg{a}$. We will implicitly fix a basis of $\lalg{a}$ and corresponding norms $|\cdot|$ and Lebesgue measures $dx$ on $\lalg{a}$ and $dp$ on its dual $\mathfrak{a}^*$. 
Moreover, let $J$ be a continuous function on $\lalg{a}^+$, positive on $\mathring{\lalg{a}}^+$ and vanishing on $\partial \lalg{a}$. Assume also the function defined on $\mathring{\lalg{a}}^+$ by $j=-\log J$ is smooth and convex. Finally, let $\Delta_1,\ldots,\Delta_k$ be convex bodies in $\mathfrak{a}^*$ and, for each $i\in \{1,\ldots,k\}$, let $G_i$ be a continuous function on $\Delta_i$, smooth and positive on  $\mathring{\Delta}_i$, and such that 
$ \int_{\Delta_i} G_i dp = 1$, where the integral is taken with respect to the fixed Lebesgue measure $dp$ on $\lalg{a}^*$.

The statement of the theorem will involve a few technical conditions on $j$ and $G_1,\ldots,G_k$. To state them we first need some terminology. Given a convex function $f: \mathring{\lalg{a}}^+\rightarrow \bbR$,  we define its 
\emph{asymptotic function} 
\[ f_{\infty}:\lalg{a}^+\rightarrow \bbR\cup \{\infty\} 
\qquad \xi \mapsto \lim_{t\rightarrow \infty} f(x+t\xi)/t \]
for any choice of $x\in \mathring{\lalg{a}}^+$ (it is standard that for a convex function, 
$f_{\infty}$ does not depend on $x$). Moreover, for any convex body $\Delta\subset \mathfrak{a}^*$, let $\mathcal{P}(\Delta)$ denote the space of all smooth, strictly convex functions $u:\mathfrak{a}\rightarrow \bbR$ such that $\overline{du(\lalg{a}^+)} = \Delta$ and
\begin{equation} 
\label{eq:PDelta}
\sup_{\lalg{a}^+} |u-v_{\Delta}| < \infty, 
\end{equation} 
where $v_\Delta$ is the support function of $\Delta$. We note that $f_{\infty}=v_{\Delta}|_{\lalg{a}^+}$ for any 
$f\in \mathcal{P}(\Delta)$. 

From now on, we will let $\Delta$ denote the Minkowski sum $\Delta = \sum_{i=1}^k \Delta_i$. 

In order to prove our main result on coupled real Monge-Ampère equations, we will add the following assumption \emph{on the particular choice of $j$, $\Delta$ and $G_1,\ldots,G_k$ considered}.

\begin{assumption}
\label{Jassumption}
We assume the following on $j$, $\Delta$ and $G_1,\ldots,G_k$:
\begin{enumerate}

\item \label{Jass_proper} $j_{\infty}+v_{\Delta}\geq \epsilon |x|$ for some $\epsilon>0$,
\item \label{Jass_translate} $j-j_{\infty}$ is bounded from below on $\mathring{\lalg{a}}^+$ 
\item $j_\infty$ is continuous and finite on $\lalg{a}^+$
\item \label{gass_integrable} $G_i^{-\epsilon}$ is integrable for some $\epsilon>0$. 
\end{enumerate}
\end{assumption}

Fix, for all $i\in\{1,\ldots,k\}$, a reference function $\fun{i}{\rf} \in \mathcal{P}(\Delta_i)$. 
We consider the continuity path of system of equations of the form 
\begin{equation} 
\label{eq:RealContPath}
\MAR{\fun{i}{t}}G_i(d\fun{i}{t}) = J \prod_m e^{-t\fun{m}{t}-(1-t)\fun{m}{\rf}} 
\end{equation}
where $0\leq t\leq 1$, and we are only interested in solutions $(\fun{i}{t})\in \prod_i \mathcal{P}(\Delta_i)$. Similarly as in Section~\ref{sec:intro_red}, we assume any solution to be normalized to satisfy 
$$\fun{1}{t}(0)=\fun{2}{t}(0)=\cdots=\fun{k}{t}(0).$$ 

Let $\barDH{i}\in \Delta_i$ denote the barycenter of $\Delta_i$ with respect to the measure with 
potential $G_i$
$$\barDH{i} = \int_{\Delta_i} p G_i(p)dp$$
and let $F_t$ denote the function on $\lalg{a}^+$ given by
$$
F_t = t\sum_{i=1}^k \barDH{i} +(1-t)v_{\Delta} +j_{\infty}.
$$
For $t\in [0,1]$ we will use $(\ddagger_t)$ to denote the following condition   
\begin{align*}
F_t(\xi) & \geq 0 \text{ for all }\xi\in \mathfrak{a}^+ \\
F_t(\xi) & = 0 \text{ only if }t=1, -\xi\in \mathfrak{a}^+ \text{ and } j_\infty(-\xi)=-j_\infty (\xi). 
\end{align*}
\begin{thm}
\label{thm:C0Estimates}
Let $t_0>0$ and $t\in (t_0,1]$. Assume $(\ddagger_t)$ is true.
Then there are $C^0$-estimates as in \eqref{eq:C0Assumption} for Equation~\eqref{eq:RealContPath} on $[t_0, t]$. 
Moreover, if $(\ddagger_t)$ is not true, then Equation~\eqref{eq:RealContPath} has no solution at $t$.
\end{thm}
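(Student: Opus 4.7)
The plan is to follow the Wang--Zhu strategy for toric Kähler--Einstein metrics, adapted to the coupled, polyhedral-cone setting. The two halves of the theorem---the $C^0$-estimate when $(\ddagger_t)$ holds and the non-existence when it fails---arise as the two sides of the same convex-analytic identity, obtained by integrating \eqref{eq:RealContPath} against linear forms $\xi \in \lalg{a}^+$ and reading off the coefficients of $F_t$ from the asymptotics $\fun{i}{t} \sim v_{\Delta_i}$ and $j \sim j_\infty$ at infinity in $\lalg{a}^+$.

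For the non-existence direction, I suppose $(\fun{i}{t})$ solves \eqref{eq:RealContPath} at parameter $t$. A first integration of the equation over $\lalg{a}^+$, combined with the change of variables $p = d\fun{i}{t}(x)$ on the left and the normalization $\int_{\Delta_i} G_i = 1$, produces the basic identity
$$ \int_{\lalg{a}^+} J \prod_m e^{-t \fun{m}{t} - (1-t) \fun{m}{\rf}}\, dx = 1. $$
Next I weight the equation by $\xi \cdot x$ for $\xi \in \lalg{a}^+$ and integrate. The left-hand side yields $\barDH{i}(\xi)$ on the $i$-th piece via the same change of variables; the right-hand side, using $\fun{m}{\rf} \sim v_{\Delta_m}$ and $j \sim j_\infty$, contributes $(1-t) v_{\Delta}(\xi) + j_\infty(\xi)$; the terms involving $t \fun{m}{t}$ cancel between the two sides (a standard observation). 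Summing over $i$ produces an identity of the form $F_t(\xi) = (\text{nonnegative remainder})$, whose remainder vanishes only when the solution is affine in the direction $\xi$; strict convexity in $\mathcal{P}(\Delta_i)$ then forces $t = 1$, $-\xi \in \lalg{a}^+$, and $j_\infty(-\xi) = -j_\infty(\xi)$. Hence, if $(\ddagger_t)$ fails, no solution exists at $t$.

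For the $C^0$-estimate I argue by contradiction: assume $t_n \in [t_0, t]$ and solutions $(\fun{i}{t_n})$ with $\sup_{\lalg{a}^+}|\fun{i}{t_n}| \to \infty$. Using the convexity of each $\fun{i}{t_n}$, the prescribed gradient image $\Delta_i$, and the basic integral identity controlling the overall scale, I extract (after passing to a subsequence) an escape direction $\xi \in \lalg{a}^+$ along which suitable translates of the potentials concentrate and the mass of $\prod_m e^{-t_n \fun{m}{t_n} - (1-t_n)\fun{m}{\rf}} J\, dx$ escapes. Passing to the limit in the $\xi$-weighted integrated equation of the previous paragraph yields $F_t(\xi) \leq 0$ at a non-degenerate $\xi$, contradicting $(\ddagger_t)$. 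The main obstacle---and the role of Assumption~\ref{Jassumption}---is precisely this quantitative blow-up analysis: \eqref{Jass_proper} provides the coercivity needed to exhibit a genuine escape direction; \eqref{Jass_translate} yields the comparison $j \geq j_\infty - O(1)$ on the sublevel set $K_M$ capturing the limit; continuity of $j_\infty$ makes the boundary limits well-defined; and \eqref{gass_integrable} ensures that $G_i$-weighted integrals remain finite in the limit. Once these ingredients are in place, the remainder of the argument is standard convex-analytic bookkeeping.
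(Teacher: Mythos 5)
Your strategy is the paper's own (the Wang--Zhu scheme: an integration-by-parts identity against directions $\xi$, identification of the coefficients of $F_t$ from the asymptotics of $j$ and the potentials, and a blow-up argument for the estimate). The non-existence half is essentially correct in outline, with one imprecision worth flagging: the correct weight is not $\xi\cdot x$ but the directional derivative. The identity actually used is $0=\int_{\lalg{a}^+} d\nu_t(\xi)\,e^{-\nu_t}$ with $\nu_t=t\sum_m\fun{m}{t}+(1-t)\sum_m\fun{m}{\rf}+j$ (integration by parts, using that $J$ vanishes on $\partial\lalg{a}^+$ and Assumption~\ref{Jassumption}(\ref{Jass_proper})); then $\int d\fun{m}{t}(\xi)e^{-\nu_t}=\barDH{m}(\xi)$ by the change of variables $p=d\fun{m}{t}(x)$ and the equation. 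Weighting by $\xi\cdot x$ instead produces the gradient of the Legendre transform, not the barycenter, and nothing ``cancels'': the $t\fun{m}{t}$ terms become $t\sum_m\barDH{m}(\xi)$. Also, the equality case is governed by affineness of $j$ in the direction $\xi$ and by $t=1$ (forced by strict convexity of the fixed reference functions $\fun{m}{\rf}$), not by affineness of the solution.

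The $C^0$ half has two genuine gaps. First, the passage from $\sup|\fun{i}{t_n}|\to\infty$ to a bona fide escape direction carrying the mass of $e^{-\nu_{t_n}}$ is where most of the work lies, and it is not supplied. The paper's route is: prove $|m_t|=|\min\nu_t|\le C$ and the uniform linear growth $\nu_t\ge\kappa|x-x_t|-C$ via the Wang--Zhu volume comparison (this is where $t\ge t_0>0$ enters, through the lower bound $\det(d^2\nu_t)\ge C e^{-m_t}$), and then prove a separate reduction lemma showing that boundedness of the minimizer $x_t$, together with these two facts, implies $\sup|\fun{i}{t}-\fun{i}{\rf}|\le C$. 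That reduction goes through the Legendre transforms $w_i^t$, a Sobolev inequality on $\Delta_i$, and a H\"older step in which Assumption~\ref{Jassumption}(\ref{gass_integrable}) is used to absorb $G_i^{-1/p}$ --- not, as you suggest, to keep $G_i$-weighted integrals finite in the limit. Without this reduction, your contradiction hypothesis does not yet produce a divergent minimizer $x_{t_n}$, and without the linear growth estimate the limiting integrals in your final step are not controlled.

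Second, your conclusion that the limit yields $F_t(\xi)\le 0$ at a ``non-degenerate'' $\xi$ is unjustified, and this matters precisely in the main case $t=1$: the condition $(\ddagger_1)$ explicitly permits $F_1(\xi)=0$ when $-\xi\in\lalg{a}^+$ and $j_\infty(-\xi)=-j_\infty(\xi)$, so obtaining $F_{t_\infty}(\xi_\infty)=0$ in the limit is not by itself a contradiction. One must separately exclude the degenerate case; the paper does so by noting that degeneracy would force $\int\sum_m d\fun{m}{\rf}(\xi_\infty)e^{-\nu_{t_i}}$ to equal the constant $-j_\infty(\xi_\infty)$ for all $i$, while this quantity is strictly below $v_\Delta(\xi_\infty)$ yet converges to $v_\Delta(\xi_\infty)$. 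This step is absent from your proposal and cannot be omitted.
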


\subsection*{Acknowledgements} 
It is a pleasure for the authors to thank the organizers of the Spring School  \emph{Flows and Limits in Kähler Geometry} in Nantes, part of a thematic semester of the Centre Henri Lebesgue, where the joint work leading to this article was initiated.
The first author is partially supported by the ANR Project FIBALGA ANR-18-CE40-0003-01. The second author was partially supported by the Olle Engkvist Foundation and the research council of Norway, grant number 240569.

\section{Reduction to $C^0$ estimates}
\label{sec:reductionC0}

Let $s\in [0,1]$ and $t\in [0,1]$. To prove Theorem~\ref{thm:ReductionToC0} we will consider the following more general version of \eqref{eq:ComplexContPath}:
\begin{equation}
    \label{eq:GenComplexContPath}
    e^{sh_i(f_i+V_i(\phi_i))}\omega_i^n = e^{-t\sum_{m=1}^k \phi_m}\theta_0^n
\end{equation}
where $\omega_i = \theta_i + \iddbar \phi_i$ and $\phi_i = \phi_i(s,t,\cdot)$. 

We will begin by proving that the set of $(t,s)\in [0,1)\times [0,1]$ such that \eqref{eq:GenComplexContPath} is solvable is open. Then we will establish a priori higher order estimates on solutions to \eqref{eq:GenComplexContPath} assuming a priori $C_0$-estimates. Finally we will prove a priori $C_0$-estimates for solutions to \eqref{eq:GenComplexContPath} when $(t,s)\in \{0\}\times [0,1]$. Using the Calabi-Yau Theorem we will produce a solution at $(t,s)=(0,0)$. Using this, together with the assumption in the Theorem~\ref{thm:ReductionToC0} on $C^0$-estimates for $(t,s)\in [0,1)\times \{1\}$, we will be able to conclude that \eqref{eq:GenComplexContPath} is solvable for $(t,s)\in (\{0\}\times [0,1])\cup ([0,1]\times \{1\})$. In particular, that \eqref{eq:GenComplexContPath} is solvable for $s=t=1$ proving Theorem~\ref{thm:ReductionToC0}.

\subsection{Openness}
We define the following Banach spaces 
$$ A= \left\{(\phi_i)\in \left(C^{4,\alpha}(X)\right)^k: \phi_i \textnormal{ is } K-\textnormal{invariant for all } i  \right\} $$
and 
$$ B = \left\{(v_i)\in \left(C^{2,\alpha}(X)\right)^k: v_i \textnormal{ is } K-\textnormal{invariant for all } i  \right\}. $$ 
Moreover, let $A_{(\theta_i)}$ be the set of $k$-tuples $(\phi_i)\in A$ such that $\omega_i:=\theta_i+\iddbar \phi_i>0$ for all $i$. Moreover, for each $i$, let $g_i$ denote the function
$$ g_i = h_i(f_i+V_i(\phi_i)). $$
Let 
$$ F:[0,1]\times  [0,1]\times A_{(\theta_i)} \rightarrow B$$ 
be defined by
$$
F(t,s,(\phi_i)) = 
\begin{pmatrix} 
\log\frac{\omega_1^n}{\theta_1^n} + g_1+{V_1}(\phi_1) + t\sum \phi_i - \hat \phi_1 \\
\vdots \\
\log \frac{\omega_k}{\theta_k^n} +g_k+{V_k}(\phi_k) + t\sum \phi_i - \hat \phi_k 
\end{pmatrix}
$$
where $\hat\phi_i$ is the average of $\phi_i$ with respect to $\theta_i$
$$ \hat\phi_i = \frac{\int_X \phi_i \theta_i^n}{\int_X \theta_i^n}. $$
Up to normalization, it follows that $F(t,s,(\phi_i))=0$ if and only if $(\phi_i)$ is a solution to \eqref{eq:GenComplexContPath} at $(t,s)$.
Moreover, in this case the measure
$$ \mu := e^{sg_i}\omega_i^n  $$ 
is independent of $i$.

To any Kähler metric $\omega$, expressed in local coordinates as 
$$\omega=\sqrt{-1}\sum \omega_{j,m} dz\wedge d\bar z,$$ 
and function $g$ we may associate a complex (1,0)-vector field 
$$ \frac{1}{\sqrt{-1}}\sum_{j,m} \frac{\partial g}{\partial \bar z_m} dz_j. $$ 
The following lemma characterizes the variation of $g_i$ with respect to $\phi_i$. 
\begin{lem}
\label{lemma:VariationOfExtraFactor}
Let $(v_i)\in A$. Then
\begin{equation} 
\left.\frac{dh_i(f_i+V_i(\phi_i+tv_i))}{dt} \right|_{t=0} = \left\langle\grad^\mathbb C_{\omega_i} h_i(f_i+V_i(\phi_i)),v_i\right\rangle. 
\label{eq:gVariation}
\end{equation}
\end{lem}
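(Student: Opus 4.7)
The plan is to reduce the identity to the standard transformation rule for Hamiltonian potentials of holomorphic vector fields under Kähler deformations. First I would exploit that $V_i$ acts as a derivation on smooth functions, so that $V_i(\phi_i+tv_i) = V_i(\phi_i) + tV_i(v_i)$, and apply the one-variable chain rule to $h_i$ to obtain
\[
\left.\frac{d}{dt}\right|_{t=0} h_i\bigl(f_i + V_i(\phi_i+tv_i)\bigr)
= h_i'\bigl(f_i+V_i(\phi_i)\bigr)\cdot V_i(v_i).
\]
The remaining task is to identify this quantity with the pairing on the right-hand side of \eqref{eq:gVariation}.

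The crucial observation is that $f_i + V_i(\phi_i)$ is a $\iddbar$-potential for $L_{V_i}\omega_i$ with respect to the deformed Kähler form $\omega_i = \theta_i + \iddbar \phi_i$. Indeed, because $V_i$ is holomorphic, $L_{V_i}$ commutes with $\iddbar$, so
\[
L_{V_i}\omega_i = L_{V_i}\theta_i + \iddbar V_i(\phi_i) = \iddbar\bigl(f_i + V_i(\phi_i)\bigr),
\]
using the defining property of $f_i$. Combined with the assumption that $JV_i$ integrates to a subgroup of $K$, so that $V_i$ is actually a real Hamiltonian vector field for $\omega_i$ with real potential $f_i+V_i(\phi_i)$, this identifies $V_i$ (up to the sign and $\sqrt{-1}$ conventions fixed just before the lemma) with the complex gradient $\grad^{\mathbb C}_{\omega_i}(f_i+V_i(\phi_i))$ associated in that formalism to the Kähler form $\omega_i$ and the function $f_i+V_i(\phi_i)$.

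The conclusion then follows from the chain rule for complex gradients: since $\grad^{\mathbb C}_{\omega_i}$ depends linearly on the first derivatives of its argument,
\[
\grad^{\mathbb C}_{\omega_i}\, h_i\bigl(f_i+V_i(\phi_i)\bigr)
= h_i'\bigl(f_i+V_i(\phi_i)\bigr)\cdot \grad^{\mathbb C}_{\omega_i}\bigl(f_i+V_i(\phi_i)\bigr),
\]
and pairing with $v_i$ — that is, letting the resulting $(1,0)$-field act on the function $v_i$ in the sense of the bracket $\langle\cdot,\cdot\rangle$ — produces exactly $h_i'(f_i+V_i(\phi_i)) V_i(v_i)$, matching the previously computed left-hand side. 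The only real obstacle is bookkeeping: one must verify that the signs, factors of $\sqrt{-1}$, and raising/lowering with $\omega_i^{-1}$ implicit in the definition of $\grad^{\mathbb C}_{\omega_i}$ and in the pairing $\langle\cdot,\cdot\rangle$ are consistent with the Hamiltonian identity $L_{V_i}\omega_i = \iddbar(f_i+V_i(\phi_i))$; once these conventions are aligned the argument is purely formal.
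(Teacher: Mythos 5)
Your overall strategy is the same as the paper's: reduce to $h_i=\mathrm{id}$ by the chain rule (both for the $t$-derivative and for $\grad^{\mathbb C}_{\omega_i}$), and then identify $V_i$ with $\grad^{\mathbb C}_{\omega_i}\bigl(f_i+V_i(\phi_i)\bigr)$. The first and last steps of your argument are fine. The problem is the middle step, which is exactly where the content of the lemma lies. From $L_{V_i}$ commuting with $\iddbar$ you correctly get
\[
\iddbar\bigl(f_i+V_i(\phi_i)\bigr)=L_{V_i}\omega_i=d\bigl(V_i\rfloor\omega_i\bigr),
\]
but this only says that $\bar\partial\bigl(f_i+V_i(\phi_i)\bigr)$ and the $(0,1)$-part of $V_i\rfloor\omega_i$ agree up to a $\partial$-closed $(0,1)$-form, i.e.\ up to the conjugate of a holomorphic $(1,0)$-form. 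To identify $V_i$ with the complex gradient you need this discrepancy to vanish \emph{identically}, and you assert that it does because ``$JV_i$ integrates to a subgroup of $K$, so that $V_i$ is actually a real Hamiltonian vector field for $\omega_i$ with real potential $f_i+V_i(\phi_i)$.'' That assertion is precisely the statement to be proved, and the $K$-invariance of the flow of $JV_i$ does not by itself rule out a nonzero (anti)holomorphic correction term: a holomorphic $1$-form on a compact Kähler manifold is closed and harmonic, and invariance under a compact group action does not force it to be zero.

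The ingredient you are missing is the Fano hypothesis: since $c_1(X)>0$, there are no nonzero holomorphic $1$-forms on $X$ ($H^0(X,\Omega^1_X)=0$), so the $\partial$-closed form $\bar\partial\bigl(f_i+V_i(\phi_i)\bigr)-V_i\rfloor\omega_i$ vanishes after conjugation, hence vanishes. This is exactly how the paper closes the argument; once you have the pointwise identity $\bar\partial\bigl(f_i+V_i(\phi_i)\bigr)=V_i\rfloor\omega_i$, the coordinate computation identifying $V_i$ with $\grad^{\mathbb C}_{\omega_i}\bigl(f_i+V_i(\phi_i)\bigr)$, and therefore your chain-rule conclusion, goes through. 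So the gap is localized but genuine: replace the appeal to $K$-invariance by the vanishing of holomorphic $1$-forms on a Fano manifold.
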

\begin{proof} 
By the chain rule, it suffices to prove this in the case $h_i=id$. In this case it is well known. However, for completeness we include a proof of it. By straight forward differentiation, the left hand side of \eqref{eq:gVariation} is $V_i(v_i)$.  
Moreover, by the definition of $f_i$
\begin{eqnarray} 
\iddbar \left( f_i + V_i(\phi_i)\right) & = & L_{V_i}(\theta_i) + L_{V_i}(\iddbar \phi_i) \nonumber \\
& = & L_{V_i}(\omega_i) \nonumber \\
& = & d(V_i\rfloor \omega_i). \nonumber
\end{eqnarray}
It follows that 
\begin{equation} \bar \partial \left( f_i + V_i(\phi_i)\right) - V_i\rfloor \omega_i \label{eq:holo10form} \end{equation}
is a $\partial$-closed (0,1)-form. Conjugating \eqref{eq:holo10form} gives a holomorphic $(1,0)$-form which, since $X$ is Fano, must vanish. Hence $\eqref{eq:holo10form}$ vanishes. Choosing coordinates $(z_1,\ldots,z_n)$ such that $ \omega_i = \sqrt{-1}\sum_j dz_j\wedge d z_j  $ and writing $V_i = \sum_j V_i^j \partial/\partial \bar z_j $ we get
\begin{eqnarray}
\sum_j \frac{\partial\left( f_i + V_i(\phi_i)\right)}{\partial\bar z_j} d\bar z_j & = & \bar \partial \left( f_i + V_i(\phi_i)\right) \nonumber \\
& = & V_i\rfloor \omega_i \nonumber \\
& = &
\left.\left(\sum_jV_i^j\frac{\partial}{\partial z_j}\right)\right\rfloor \left( \sqrt{-1}\sum_j dz_j\wedge d\bar z_j\right) \nonumber \\
& = & \sqrt{-1}\sum_j V_i^j d\bar z_j, \nonumber
\end{eqnarray} 
hence $\partial ( f_i + V_i(\phi_i))/d\bar z_j = \sqrt{-1}V_i^j$ for each $j$ and 
\begin{eqnarray}
\grad^\mathbb C_\omega\left( f_i + V_i(\phi_i)\right) & = & \frac{1}{\sqrt{-1}}\sum_j \frac{\partial\left( f_i + V_i(\phi_i)\right)}{d \bar z_j} \frac{\partial}{\partial z_j} \nonumber \\
 & = & \sum_j V_i^j \frac{\partial}{\partial z_j} \nonumber \\
 & = & V_i. \nonumber
\end{eqnarray} 
This proves the lemma. 
\end{proof}

Now, let $\Delta_{\omega_i,sg_i}$ be the $sg_i$-weighted Laplacian
$$ \Delta_{\omega_i,sg_i} = \Delta_{\omega_i} + \grad^\mathbb C_{\omega_i} sg_i $$
where $\Delta_{\omega_i}$ is the usual Laplace-Beltrami operator of the metric $\omega_i$. 
\begin{lem}
\label{lemma:Linearization}
The linearization of $F$ at $(t,s,\phi)$ with respect to the third argument is given by $H:A\rightarrow B$ defined by
\begin{equation}
\label{eq:Linearization}
H(v_1,\ldots,v_k) = \begin{pmatrix} 
-\Delta_{\omega_1,sg_1} v_1 + t\sum v_i - \hat v_1\\
\vdots \\
-\Delta_{\omega_k,sg_k} v_k + t\sum v_i - \hat v_k\\
\end{pmatrix}.
\end{equation}
where 
$$\hat v_i = \frac{\int v_i \theta_i^n}{\int\theta_i^n}.$$
Moreover, $H$ is elliptic. Finally, assume $F(t,\phi)=0$ and let $\langle\cdot,\cdot\rangle$ be the inner product on $(C^{2,\alpha}(X))^k$ given by
$$ \langle(u_i),(v_i)\rangle = \sum_i \int_X u_iv_i d\mu $$
Then $\langle H(u_1,\ldots,u_k) , (v_i)\rangle = \langle (u_i),H(v_1,\ldots,v_k) \rangle$ for any $(u_i),(v_i)\in (C^{2,\alpha})^k$.
\end{lem}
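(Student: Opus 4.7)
The plan is to establish the three assertions of the lemma in the order stated: first derive the formula for $H$ by termwise differentiation, then observe ellipticity from the principal symbol, and finally verify self-adjointness by integration by parts against the weighted measure.

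First I would linearize $F$ componentwise at a fixed $(\phi_i)\in A_{(\theta_i)}$ by computing $\frac{d}{d\tau}\big|_{\tau=0} F_i\bigl(t,s,(\phi_j+\tau v_j)\bigr)$. The term $\log(\omega_i^n/\theta_i^n)$ contributes the standard Monge--Amp\`ere linearization, namely (up to the sign convention of the paper) $-\Delta_{\omega_i}v_i$. The variation of the nonlinear weight $sh_i(f_i + V_i(\phi_i))$ is supplied directly by Lemma~\ref{lemma:VariationOfExtraFactor}, producing $\langle \grad^{\bbC}_{\omega_i} sg_i, v_i\rangle$. The coupling $t\sum_j \phi_j$ contributes $t\sum_j v_j$ in every row, while the $-\hat\phi_i$ term contributes $-\hat v_i$. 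Assembling the second-order trace part and the first-order drift part repackages them as the weighted Laplacian $-\Delta_{\omega_i,sg_i}$ of the statement, yielding \eqref{eq:Linearization}.

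Ellipticity is then immediate: up to terms of lower order, $H$ is the block-diagonal operator $(-\Delta_{\omega_i})_{i=1}^k$, whose principal symbol is nondegenerate since each $\omega_i$ is a Kähler metric; the coupling $t\sum_j v_j$ and the averages $\hat v_i$ are of order zero and do not affect the principal symbol. For self-adjointness, the key observation is that when $F(t,\phi)=0$ equation \eqref{eq:GenComplexContPath} forces $\mu= e^{sg_i}\omega_i^n = e^{-t\sum_m \phi_m}\omega_0^n$ to be independent of $i$. For each fixed $i$, integration by parts against this weighted measure gives the standard identity
\[ \int_X (-\Delta_{\omega_i,sg_i}\, u)\, v\; d\mu \;=\; \int_X \langle \grad u,\grad v\rangle_{\omega_i}\, d\mu, \]
which is symmetric in $(u,v)$, so $-\Delta_{\omega_i,sg_i}$ is self-adjoint on $L^2(\mu,\bbR)$. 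Summing over $i$ handles the diagonal principal part. The coupling contribution $t\sum_{i,j}\int u_j\, v_i\, d\mu$ is manifestly symmetric in $(u,v)$, and the contribution of the averages pairs symmetrically for the natural choice of normalization of $\hat v_i$ compatible with $\mu$.

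I expect the only real obstacle to be bookkeeping: tracking sign conventions for $\Delta_{\omega_i}$, confirming the drift identity with the correct sign for $\grad^{\bbC}_{\omega_i} sg_i$ inherited from Lemma~\ref{lemma:VariationOfExtraFactor}, and pinning down the normalization of $\hat v_i$ so that the scalar average term pairs symmetrically under the $\mu$-inner product. None of these is conceptually hard, but each requires honoring the conventions already fixed in the preceding sections.
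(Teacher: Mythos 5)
Your proposal is correct and follows essentially the same route as the paper: the variation of the weight term via Lemma~\ref{lemma:VariationOfExtraFactor}, ellipticity read off from the block-diagonal principal part, and self-adjointness via integration by parts against the common measure $\mu=e^{sg_i}\omega_i^n$ (which is independent of $i$ precisely when $F(t,\phi)=0$), plus the manifest symmetry of the coupling term. The paper simply delegates the routine computations to Lemma~3 of \cite{Hul} rather than writing them out, so no substantive difference.
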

\begin{proof}
By Lemma~\ref{lemma:VariationOfExtraFactor}, the linearization of $g_i$ with respect to $\phi$ is
$\grad^\mathbb C_{\omega_i} g( u_i )$. Together with a standard computation (see for example the proof of Lemma~3 in \cite{Hul}), this proves the first part of the lemma. 

The second part of the lemma, i.e. that $H$ is elliptic, follows exactly as in the proof of Lemma 3 in \cite{Hul}.

The third part of the lemma is a consequence of standard properties for $\Delta_{\omega_i,sg_i}$, namely
\begin{equation} 
\label{eq:WeightedLaplacianPI}
\int_X (\Delta_{\omega_i,sg_i} u)v e^{sg_i}\omega_i^n = \int \langle du,dv \rangle_{\omega_i} e^{sg_i}\omega_i^n = \int_X u(\Delta_{\omega_i,sg_i} v) e^{sg_i}\omega_i^n
\end{equation}
for any $u,v\in C^{2,\alpha}$. 
From this, the third part of the lemma follows as in the proof of Lemma~3 in \cite{Hul}.
\end{proof}

\begin{lem}
\label{lemma:HInjective}
Assume $(t,s)\in [0,1)\times [0,1]$ and $(v_i)\in A$ satisfies for all $i$
\begin{equation}
    \Delta_{\omega_i,sg_i} v_i = \lambda\sum_{m=1}^k v_m
    \label{eq:LaplaceEigenfunction}
\end{equation}
for a $k$-tuple $\omega_1,\ldots,\omega_k$ satisfying for all $i$
\begin{equation} 
\Ric \omega_i - \sqrt{-1} \partial \bar \partial sg_i = \gamma + t\sum_{m=1}^k \omega_m + (1-t)\sum_{m=1}^k  \theta_m.
\label{eq:RicContinuityMethod}
\end{equation}
Then either $\lambda > t$ or $v_i$ is constant for all $i$.
\end{lem}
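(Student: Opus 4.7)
The plan is to establish the spectral gap via a weighted Kähler Bochner–Lichnerowicz argument applied to each component $v_i$, exploiting the Bakry–Emery Ricci lower bound supplied by \eqref{eq:RicContinuityMethod}. Rewriting that equation as
\[ \Ric \omega_i - \iddbar(sg_i) = \gamma + t\sum_m \omega_m + (1-t)\sum_m \theta_m, \]
and using $\gamma \geq 0$, $\theta_m \geq 0$, we obtain the pointwise lower bound $\Ric \omega_i - \iddbar(sg_i) \geq t\sum_m \omega_m \geq t\omega_i$ as $(1,1)$-forms. This is the key curvature input, and the presence of all $\omega_m$ on the right (not just $\omega_i$) will turn out to matter.

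First, I would rule out $\lambda \leq 0$. At a solution of \eqref{eq:GenComplexContPath} the measure $d\mu := e^{sg_i}\omega_i^n$ is independent of $i$, and the weighted integration-by-parts formula \eqref{eq:WeightedLaplacianPI} is available. Multiplying $\Delta_{\omega_i, sg_i} v_i = \lambda w$, with $w := \sum_m v_m$, by $v_i$, integrating against $d\mu$ and summing in $i$ yields
\[ \sum_i \int_X |\nabla v_i|^2_{\omega_i}\, d\mu \;=\; \lambda \int_X w^2\, d\mu. \]
This forces $\lambda \geq 0$, and either $\lambda = 0$ or $w \equiv 0$ already implies every $v_i$ is constant. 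From here on I may assume $\lambda > 0$ and $w \not\equiv 0$.

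The main step is then the weighted Kähler Bochner identity applied to each $v_i$ with respect to $(\omega_i, sg_i)$, schematically
\[ \int_X (\Delta_{\omega_i, sg_i} v_i)^2\, d\mu = \int_X |\bar\partial \nabla^{1,0}_{\omega_i} v_i|^2_{\omega_i}\, d\mu + \int_X \bigl(\Ric \omega_i - \iddbar(sg_i)\bigr)\bigl(\nabla^{1,0}_{\omega_i} v_i, \overline{\nabla^{1,0}_{\omega_i} v_i}\bigr)\, d\mu. \]
Substituting $\Delta_{\omega_i, sg_i} v_i = \lambda w$ on the left, inserting the Bakry–Emery Ricci bound on the right, summing over $i$, and combining with the identity of the previous paragraph produces a coupled inequality whose sharp resolution is the bound $\lambda > t$; in the borderline case it forces $\bar\partial \nabla^{1,0}_{\omega_i} v_i = 0$, so each $\nabla^{1,0}_{\omega_i} v_i$ is a holomorphic vector field and the curvature inequality is saturated in its direction. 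The structure of the argument follows Hultgren's Lemma~3 in \cite{Hul}, where the analogous statement for coupled Kähler–Einstein is proved.

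The main obstacle is precisely this last sharpening. The Bochner identity is attached to one metric $\omega_i$ at a time, while the eigenvalue equation couples all $v_i$ through the common sum $w$. Matching the cross contractions $\int \omega_m(\nabla^{1,0}_{\omega_i} v_i, \overline{\nabla^{1,0}_{\omega_i} v_i})\, d\mu$ for $m \neq i$ with the identity from the first step, rather than discarding them and settling for the weaker diagonal bound $\Ric - \iddbar(sg_i) \geq t\omega_i$, is what produces the sharp threshold $\lambda > t$ rather than a non-sharp version diluted by the number of components. Once holomorphicity of all $\nabla^{1,0}_{\omega_i} v_i$ is established in the equality case, the conclusion that each $v_i$ is constant follows by analyzing the saturation of the Ricci inequality jointly across $i$, as in the cited reference.
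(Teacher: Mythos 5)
Your proposal follows essentially the same route as the paper: the only genuinely new ingredient there is the weighted Weitzenböck inequality for $\Delta_{\omega_i,sg_i}$ (stated in the paper with $\int\langle d(\Delta_{\omega_i,sg_i}u),du\rangle_{\omega_i}\,e^{g_i}\omega_i^n$ on the left, equivalent to your $\int(\Delta_{\omega_i,sg_i}v_i)^2\,d\mu$ after integrating by parts), combined with the Bakry--Emery Ricci lower bound $\Ric\omega_i-\iddbar(sg_i)\geq t\sum_m\omega_m$ from \eqref{eq:RicContinuityMethod}, with the coupled cross-term bookkeeping deferred to \cite{Hul} and \cite{Pin18} exactly as you do. You correctly identify that keeping the off-diagonal contractions $\omega_m(\nabla^{1,0}_{\omega_i}v_i,\overline{\nabla^{1,0}_{\omega_i}v_i})$, $m\neq i$, is what yields the sharp threshold $\lambda>t$; the relevant reference is Lemma~4 (not Lemma~3) of \cite{Hul}.
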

\begin{proof}
 The proof of this lemma follows closely the proof of Theorem~1.3 in \cite{Pin18} and the proof of Lemma~20 in \cite{Hul}. The crucial point that is new in this setting is the following Weitzenb\"ock identity:
 \begin{equation}
     \label{eq:WeitzenbockId}
     \int_X \left\langle d\left(\Delta_{\omega_i,sg_i} u\right), du\right\rangle_{\omega_i} e^{g_i}\omega_i^n \geq \int_X \left(\Ric_{\omega_i} - \sqrt{-1} \partial \bar \partial g_i\right) \left(\grad^\mathbb C_{\omega_i} u,\overline{\grad^\mathbb C_{\omega_i} u}\right) e^{g_i}\omega_i^n
 \end{equation}
 valid for any $u\in C^{4,\alpha}$.
 Using \eqref{eq:WeitzenbockId} and following the argument in \cite{Hul} or \cite{Pin18} proves the lemma. We will now prove \eqref{eq:WeitzenbockId}. In the following computation we will suppress the index on $\omega_i$ and $g_i$. Instead, lower index $i,\bar i,j,\bar j,p$ and $ \bar p$ will denote covariant differentiation. We will also suppress summation symbols.  
 \begin{eqnarray}
 \int_X (\Delta_\omega u)_i u_{\bar i} e^{g}\omega^n & = &  \int_X u_{j\bar j i}u_{\bar i}e^{g}\omega^n \nonumber \\
 & = & \int_X \left(u_{ij\bar j}-R^p_{ji\bar j}\right)u_{\bar i}e^{g}\omega^n \nonumber \\
 & = & \int_X \Ric_{i\bar p}u_pu_{\bar i}e^{g}\omega^n + \int_X u_{ij}u_{\bar i\bar j}e^{g}\omega^n - \int_X u_{ij}u_{\bar i}g_{\bar j}e^{g}\omega^n. \nonumber 
 \end{eqnarray}
 Moreover,
 \begin{eqnarray}
    \int_X \left(\grad^\mathbb C_{\omega} g (u)\right)_iu_{\bar i} e^{g}\omega^n 
    & = & \int_X \left(u_jg_{\bar j}\right)_iu_{\bar i} e^{g}\omega^n \nonumber \\
    & = & \int_X u_{ji}g_{\bar j}u_{\bar i} e^{g}\omega^n 
    + \int_X u_j g_{\bar ji} u_{\bar i} e^{g}\omega^n \nonumber 
 \end{eqnarray}
 We get
 \begin{eqnarray}
 \int_X  (\Delta_\omega u + \grad^\mathbb C_{\omega} g (u))_i u_{\bar i} e^{g}\omega^n & = & \int_X \left(\Ric_{i\bar p}u_pu_{\bar i}-u_j g_{\bar ji} u_{\bar i}\right)e^{g}\omega^n \nonumber \\
 & & + \int_X u_{ij}u_{\bar i\bar j}e^{g}\omega^n, \nonumber
 \end{eqnarray}
 and \eqref{eq:WeitzenbockId} follows. 
\end{proof}

\subsection{Estimate on $\Delta_{\theta_i}\phi_i$}
\begin{lem}
\label{lemma:LaplacianEstimate}
Assume $(\phi_i)$ satisfies $\eqref{eq:GenComplexContPath}$ for some $(t,s)\in [0,1]\times [0,1]$. Then  
$$ \max_i \sup_X |\Delta_{\theta_i}\phi_i| \leq C $$
where $C$ depends only on $\max_i ||\phi_i||_{C^0(X)}$.
\end{lem}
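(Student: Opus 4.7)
\textit{Preliminary step and strategy.} The plan is to adapt Yau's classical second-order (Laplacian) estimate to the present coupled, soliton-type setting, handling two new features: the coupling among the $k$ equations, and the nonlinear first-order term $V_i(\phi_i)$ appearing inside $h_i$. As a first step I would establish an a priori $C^0$ bound on $u_i := f_i + V_i(\phi_i)$. Since $V_i$ is real holomorphic, $\iddbar V_i(\phi_i) = L_{V_i}\iddbar\phi_i$, which combined with the defining property of $f_i$ gives $\iddbar u_i = L_{V_i}\omega_i$, where $\omega_i := \theta_i + \iddbar\phi_i$. Thus $u_i$ is a Hamiltonian for $V_i$ with respect to $\omega_i$, and since $JV_i$ generates a compact torus subgroup of $K$, its image lies in a fixed moment polytope depending only on $V_i$ and $[\theta_i]$, independent of $\phi_i$. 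Hence $u_i$ (and therefore $h_i(u_i)$) is a priori $C^0$-bounded. Rewriting the equation as $\omega_i^n = e^{F_i}\theta_i^n$ with
\[ F_i := \log(\theta_0^n/\theta_i^n) - t\textstyle\sum_m\phi_m - sh_i(u_i), \]
this yields a $C^0$ bound on $F_i$ depending only on $\max_m\|\phi_m\|_{C^0}$.

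\textit{Aubin--Yau and coupled maximum principle.} For each $i$, I would then apply Yau's inequality
\[ \Delta_{\omega_i}\log(n+\Delta_{\theta_i}\phi_i) \,\geq\, \frac{\Delta_{\omega_i} F_i}{n+\Delta_{\theta_i}\phi_i} \,-\, B_i\operatorname{tr}_{\omega_i}\theta_i, \]
with $B_i$ a lower bound on the bisectional curvature of $\theta_i$, and expand $\Delta_{\omega_i}F_i$. The delicate contributions are the cross-coupling terms $-2t\operatorname{tr}_{\omega_i}\omega_m$ for $m\neq i$ coming from $-t\sum_m\Delta_{\omega_i}\phi_m$, and the soliton contribution $-sh_i'(u_i)\Delta_{\omega_i}u_i - sh_i''(u_i)|\nabla u_i|^2_{\omega_i}$. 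Concavity of $h_i$ makes the gradient piece nonnegative; the drift piece must be absorbed. To cancel the cross-coupling I would apply the maximum principle to a combined auxiliary function of the form
\[ H := \max_i \log(n+\Delta_{\theta_i}\phi_i) + A\textstyle\sum_m\phi_m + B\textstyle\sum_m u_m, \]
passing to a smooth majorant at the max by selecting an index $i_0$ realizing the maximum, with constants $A,B$ tuned (using $\iddbar u_i = L_{V_i}\omega_i$ and the $C^0$ bound on $u_i$) to produce cancellation. At a max point $x_0$, combining the resulting inequality with the AM--GM bound $\operatorname{tr}_{\omega_i}\theta_i \geq ne^{-F_i/n}$ would give an upper bound on $\max_i(n+\Delta_{\theta_i}\phi_i)(x_0)$, which propagates to a global bound via $H(x)\leq H(x_0)$ and the $C^0$ bounds on the $\phi_m$ and $u_m$. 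The matching lower bound $\Delta_{\theta_i}\phi_i \geq -n$ is automatic from $\omega_i>0$.

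\textit{Main obstacle.} The hard part will be coordinating the cancellations in the combined auxiliary function: choosing $A$ and $B$ so that the cross-coupling $\operatorname{tr}_{\omega_i}\omega_m$ ($m\neq i$), the drift Laplacian $\Delta_{\omega_i}u_i = 2\operatorname{tr}_{\omega_i}L_{V_i}\omega_i$ (which is not pointwise bounded a priori), and the curvature term $\operatorname{tr}_{\omega_i}\theta_i$ either cancel or acquire the right sign simultaneously at the maximum point. This delicate balancing is modelled on the Kähler--Ricci soliton $C^2$-estimate of Tian--Zhu combined with the coupled Laplacian-estimate techniques of \cite{Hul}.
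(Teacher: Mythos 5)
Your strategy coincides with the paper's proof in all essentials: Yau's maximum-principle argument applied to a test function built from $n+\Delta_{\theta_i}\phi_i$ (the paper uses $e^{-C_1\phi_i}(n+\Delta_{\theta_i}\phi_i)$, maximized over the index $i$ as well as over $X$, which handles the cross-coupling terms $\operatorname{tr}_{\theta_i}\omega_m$ exactly as you propose, following \cite{Pin18} and \cite{Hul}); the a priori bound on $f_i+V_i(\phi_i)$ via its interpretation as a Hamiltonian with image in a fixed moment polytope (this is precisely Corollary~5.3 of \cite{Zhu00}, which the paper invokes); and concavity of $h_i$ to discard the term $h_i''\,|d(f_i+V_i(\phi_i))|^2$. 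The one place where you genuinely diverge is the "main obstacle" you flag, namely the drift term: rather than adding a correction $B\sum_m u_m$ to the test function and hunting for cancellations --- which is awkward here because the test function produces $\Delta_{\omega_i}u_i$ while the term to be cancelled is $h_i'\,\Delta_{\theta_i}u_i$, computed with respect to the background metric --- the paper follows Zhu's original device (\cite{Zhu00}, p.~769) and estimates $\Delta_{\theta_i}(f_i+V_i(\phi_i))$ directly at the maximum point. Since $\bar\partial(f_i+V_i(\phi_i))=V_i\rfloor\omega_i$, its complex Hessian splits into a term of the form $\nabla V_i\cdot\omega_i$, controlled by $C(n+\Delta_{\theta_i}\phi_i)$, plus a third-order contribution of the form $V_i(\Delta_{\theta_i}\phi_i)$ (up to curvature corrections); the first-order condition at the maximum of the test function converts the latter into $(n+\Delta_{\theta_i}\phi_i)$ times the bounded quantity $V_i(\phi_i)$. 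This resolves the balancing problem you describe without modifying the classical test function, and is the route you should take to complete the argument.
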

\begin{proof}
The proof follows the standard method of Yau. In particular, see \cite{Pin18} and \cite{Hul} where the cases of coupled K\"ahler-Einstein metrics and coupled K\"ahler-Ricci solitons is treated. The method is based on expanding the quantity 
$$ \Delta_{\omega_i} \left(e^{-C_1\phi_i}(n+\Delta_{\theta_i}\phi_i)\right) $$ at a point where $e^{-C_1\phi_i}(n+\Delta_{\theta_i}\phi_i)$ attains its maximum. To prove the lemma, some extra care need to be taken when estimating 
\begin{eqnarray} \Delta_{\theta_i}g_i & = & \Delta_{\theta_i}\left(h_i(f_i+V_i(\phi_i))\right) \nonumber \\
& = & h_i'\Delta_{\theta_i} \left(f_i+V_i(\phi_i)\right) + h_i''\left|d(f_i+V_i(\phi_i))\right|^2_{\theta_i}. \label{eq:LaplaceOfgi}
\end{eqnarray}
where $h_i'$ and $h_i''$ are the first and second derivatives of $h_i$ at the point given  by $f_i+V_i(\phi_i)$. Now, by Corollary~5.3, page 768 in \cite{Zhu00}, $|f_i+V_i(\phi_i)|$ can be bounded by a constant independent of $\omega_i$. It follows that $h'_i$ is bounded. Moreover, $|\Delta_{\theta_i}(f_i+V_i(\phi_i)) |$ can be bounded as in \cite{Zhu00}, page 769. Finally, by concavity of $h_i$, the second term is bounded from above by 0. Apart from this, the proof follows the arguments in \cite{Pin18} and \cite{Hul}.
\end{proof}

\subsection{$C^0$-estimates for $t=0$}
\begin{lem}
\label{lemma:C0Estimatet=0}
There is a constant $C$ such that any solution $(\phi_i)$ to \eqref{eq:GenComplexContPath} at $t=0$ and $s\in [0,1]$
satisfies
\begin{equation}
    \label{eq:C0Estimatet=0}
    \sup_X \left|\phi_i-\sup_X\phi_i\right|<C
\end{equation}
for all $i$. 
\end{lem}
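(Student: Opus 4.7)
The central observation is that at $t=0$ the system \eqref{eq:GenComplexContPath} fully decouples: each $\phi_i$ satisfies a single complex Monge-Ampère equation
\[ e^{sh_i(f_i+V_i(\phi_i))}(\theta_i+\iddbar\phi_i)^n = \omega_0^n, \]
with no dependence on the other components. It therefore suffices to establish the estimate \eqref{eq:C0Estimatet=0} for each index $i$ separately. My strategy is to reduce this to a standard Calabi-Yau type equation with uniformly bounded right-hand side, to which classical $C^0$-estimates apply.

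The first step is to control the exponential factor $e^{sh_i(f_i+V_i(\phi_i))}$ uniformly. Exactly as in the proof of Lemma~\ref{lemma:LaplacianEstimate}, I would invoke Zhu's estimate (Corollary~5.3 of \cite{Zhu00}) to obtain a uniform bound $|f_i+V_i(\phi_i)|\leq M$, independent of the solution $\phi_i$ and depending only on the holomorphic vector field $V_i$ and the reference form $\theta_i$. Since $h_i$ is a fixed smooth function on $\bbR$, this yields $|sh_i(f_i+V_i(\phi_i))|\leq \sup_{[-M,M]}|h_i|$ uniformly for $s\in[0,1]$.

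With this in hand, the decoupled equation takes the form
\[ (\theta_i+\iddbar\phi_i)^n = e^{F_{i,s}}\omega_0^n, \]
where $\|F_{i,s}\|_{L^\infty}$ is bounded independently of $s$ and of the solution. Integration over $X$ gives the cohomological volume identity $\int_X e^{F_{i,s}}\omega_0^n = \int_X \theta_i^n$ automatically. The classical $C^0$-estimate of Yau (or Kolodziej's pluripotential-theoretic version) then provides a constant $C$, depending only on $\|F_{i,s}\|_{L^\infty}$ and on $\theta_i,\omega_0$, such that $\sup_X|\phi_i-\sup_X\phi_i|\leq C$, which is the desired bound.

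The only non-trivial ingredient is Zhu's estimate, which is the same tool used above in the Laplacian estimate: the factor $e^{sh_i(\ldots)}$ involves the first-order expression $V_i(\phi_i)$, and without Zhu's bound one could not directly reduce to a bounded Monge-Ampère right-hand side. Once this is granted, everything else is a direct application of classical theory.
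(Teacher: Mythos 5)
Your proposal is correct and follows essentially the same route as the paper: at $t=0$ the system decouples, Zhu's Corollary~5.3 bounds the Hamiltonian term $f_i+V_i(\phi_i)$ (hence the factor $e^{sh_i(\cdot)}$) uniformly, and the resulting Calabi--Yau equation with bounded right-hand side is handled by the classical $C^0$-estimate (the paper cites Tian's argument where you cite Yau/Kolodziej, and it normalizes via an explicit constant $c_s$ where you note the volume identity holds automatically — both are cosmetic differences).
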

\begin{proof}
Let 
$$ F^i_s = -s g_i + \log \frac{\theta_0^n}{\theta_i^n}+c_s $$
where $c_s$ is a constant such that 
$$ \int_X e^{F^i_s}\theta_i^n = \int_X \theta_i^n. $$
This means each $\phi_i$ satisfies the equation
$$ \omega_i^n = e^{F^i_s}\theta_i^n. $$

As above, we note that by Corollary 5.3, page 768 in \cite{Zhu00}, $ |g_i| = |h_i(f_i+V_i(\phi_i))| $ can be bounded by a constant independent of $\phi_i$. Moreover, since $\int \omega_i^n=\int \theta_i^n$
\begin{eqnarray}
 |c_s| &  = & \left|\log\int_X e^{-s g_i}\theta_0^n - \log\int_X \theta_i^n\right| \nonumber \\
 & \leq & s\sup_X |g_i| +\left|\log\int_X \theta_0^n - \log\int_X \theta_i^n\right| \nonumber \\
 & = & s\sup_X |g_i| + c \nonumber
 \end{eqnarray}
where $c$ is a constant independent of $\phi_i$. It follows that $|F_s^i|$ can be bounded by a constant independent of $\phi_i$. By invariance of \eqref{eq:GenComplexContPath} at $t=0$ we may assume $\sup \phi_i = -1$ for all $i$. Applying the argument in \cite{Tia96}, page 157-159, we get a uniform constant $C$ satisfying \eqref{eq:C0Estimatet=0}.
\end{proof}

\subsection{Proof of Theorem~\ref{thm:ReductionToC0}}
\begin{proof}[Proof of Theorem~\ref{thm:ReductionToC0}]
First of all, Lemma~\ref{lemma:Linearization} describes the linearization $H$ of $F$. By Lemma~\ref{lemma:HInjective} the kernel of $H$ is trivial. A standard application of the Implicit Function Theorem then shows that the set of $(t,s)$ such that \eqref{eq:GenComplexContPath} is solvable is open in $[0,1)\times [0,1]$. Moreover, by the Calabi-Yau Theorem we may find $\phi_1,\ldots,\phi_k$ solving \eqref{eq:GenComplexContPath} at $(t,s)=(0,0)$. Since $\theta_0$ is $K$-invariant, $\phi_i$ will be $K$-invariant for each $i$. We will now consider the path 
$$(t,s)\in A:=\left(\{0\}\times [0,1]\right) \cup \left([0,1]\times \{1\}\right) \subset [0,1]\times [0,1].$$
In other words, we consider $(s,t)$ such that either $t=0$ or $s=1$. By the assumed $C^0$ estimates in \eqref{eq:C0Assumption}, Lemma~\ref{lemma:LaplacianEstimate} and Lemma~\ref{lemma:C0Estimatet=0} any solution to \eqref{eq:GenComplexContPath} has bounded $C^0$ norm and bounded Laplacian. It then follows from Theorem~1 in \cite{Wan12} that the $C^{2,\alpha}$-norm $\sup_i ||\phi_i||_{C^{2,\alpha}}$ of any solution to \eqref{eq:ComplexContPath} along this path is uniformly bounded. We concluded that the set of $(t,s)\in A$ such that \eqref{eq:ComplexContPath} is solvable is open and closed in $A$, hence \eqref{eq:ComplexContPath} is solvable at $s=1$ for any $t\in [0,1]$. This proves the theorem. 
\end{proof}

\section{$C^0$-estimates for coupled real Monge-Ampère equations on polyhedral cones}
\label{sec_C0}

In this section we will prove Theorem~\ref{thm:C0Estimates}. The proof essentially follows the argument in \cite{DelKE} with adaptations for the coupled case from \cite{Hul}. Both of these are in turn generalizations of the argument in \cite{WZ04}. 

Set 
$$\nu_t:=t\sum_m\fun{m}{t}+(1-t)\sum_m\fun{m}{\rf}+j$$
on $\mathring{\mathfrak{a}}^+$. 
Note that $\nu_t$ is smooth, strictly convex. Moreover, since $u_m^t, u_m^{\rf}\in \mathcal P(\Delta_m)$ for each $m$, we get that $\nu_t-j\in\Potent{}$, hence by Assumption~\ref{Jassumption}(\ref{Jass_proper}) that $\nu_t(x)$ goes to $+\infty$ whenever $x$ goes to infinity. 
As a consequence, the following real number $m_t\in \bbR$, point $x_t\in\mathring{\lalg{a}}^+$ 
and bounded convex set $A_t\subset \mathring{\lalg{a}}^+$ are well-defined: 
\[ m_t=\min \nu_t = \nu_t(x_t) \qquad A_t := 
\{x\in \mathring{\lalg{a}}^+ ~;~ m_t\leq \nu_t(x) \leq m_t+1 \}. \]

To prove Theorem~\ref{thm:C0Estimates} it will suffice with a weaker version of Assumption~\ref{Jassumption}(\ref{Jass_translate}). To state this weaker assumption we define, for every positive $M\in \bbR$, the following subset of $\lalg{a}^+$
$$ K_M = \left\{x\in \lalg{a}^+;\; \exists u\in \mathcal{P}(\Delta),\, j(x) + u(x)  - \inf_{\lalg{a}^+} (j+u) \leq M \right\}. $$
The significance of these sets is that $\nu_t>M$ on $\lalg{a}^+\setminus K_M$ and instead of Assumption~\ref{Jassumption}(\ref{Jass_translate}) we will use the following weaker assumption:
\begin{itemize}
\item  For any $M>0$, $j-j_{\infty}$ is bounded from below on $K_M$.
\end{itemize}

For a function $g$ in $\lalg{a}^+$ and $x,\xi\in \lalg{a}^+$, we will use the notation $d_x g(\xi)$ to denote the directional derivative of $g$ at the point $x$ in the direction $\xi$. As in \cite{WZ04}, the first step of the proof of Theorem~\ref{thm:C0Estimates} is to reduce $C^0$-estimates for $u_i^t$ to estimates on $\nu_t$. 
\begin{lem}[Reduction to estimates on $|m_t|$, $|x_t|$ and linear growth of $\nu_t$]
\label{lem:RedToEstOnNu}
Assume $t\geq t_0>0$ and the following hold:
\begin{eqnarray}
|m_t| & \leq & \C, \label{eq:mt_bound} \\  
\nu_t(x) & \geq & \C|x-x_t|-\C \label{eq:lin_growth} \\
|x_t| & \leq & \C. \label{eq:xt_bound}
\end{eqnarray}
Then 
\[ \max_i \sup_{\lalg{a}^+} |\fun{i}{t}-\fun{i}{\rf}| \leq \C. \]
\end{lem}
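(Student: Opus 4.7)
My strategy is to first extract a uniform bound on the common value $a_t := u_1^t(0) = \cdots = u_k^t(0)$ from the three hypotheses, and then upgrade this pointwise estimate into a sup-norm bound on $u_i^t - u_i^\rf$, following the blueprint of \cite{WZ04, DelKE, Hul}.

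Combining \eqref{eq:lin_growth}, \eqref{eq:mt_bound}, and \eqref{eq:xt_bound}, the sublevel set $A_t = \{\nu_t \leq m_t+1\}$ is contained in a fixed ball $B(0,R) \subset \mathfrak{a}$ uniformly in $t$, and $\nu_t \geq -C$ on all of $\mathfrak{a}^+$. Since $\nu_t - j \in \mathcal{P}(\Delta)$ with $\Delta = \sum_m \Delta_m$, the inclusion $A_t \subset K_1$ in the sense of Assumption~\ref{Jassumption}(\ref{Jass_translate}) holds, giving $j - j_\infty \geq -C'$ on $A_t$; combined with the continuity and finiteness of $j_\infty$ (Assumption~\ref{Jassumption}(3)) and the uniform boundedness of $A_t$, this produces a uniform lower bound $j(x_t) \geq -C''$. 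Each $u_i^t$ is uniformly Lipschitz with constant $L := \max_i \sup_{p \in \Delta_i}|p|$, since $du_i^t \in \Delta_i$; hence from the identity $\nu_t(x_t) = m_t$ together with the lower bound on $j(x_t)$, $|x_t| \leq C$, and $t \geq t_0$, I obtain a uniform upper bound on $\sum_m u_m^t(x_t)$, which combined with $u_i^t(x_t) \geq a_t - L|x_t|$ and summation over $i$ yields an upper bound on $a_t$. A matching lower bound on $a_t$ follows symmetrically: at any fixed interior point $y_0 \in \mathring{\mathfrak{a}}^+$, the global estimate $\nu_t(y_0) \geq -C$ together with the finiteness of $j(y_0)$, each $u_m^\rf(y_0)$, and $t \geq t_0$ gives a uniform lower bound on $\sum_m u_m^t(y_0)$, and hence, via Lipschitz and the normalization, on $a_t$.

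With $|a_t| \leq C$ in hand, the upper estimate $u_i^t(x) \leq v_{\Delta_i}(x) + a_t$ is immediate from the Legendre representation of $u_i^t \in \mathcal{P}(\Delta_i)$, since $\inf_{\Delta_i} u_i^{t*} = -u_i^t(0) = -a_t$. The reverse estimate $u_i^t \geq v_{\Delta_i} - C$ --- equivalent to a uniform upper bound on $\sup_{\Delta_i} u_i^{t*}$ --- is the main technical obstacle, as it cannot be deduced from pointwise information on $u_i^t$ at bounded locations alone. It is obtained by using the Monge-Ampère equation \eqref{eq:RealContPath}, whose integration yields $\int_{\mathfrak{a}^+} e^{-\nu_t}\,dx = 1$; this integral identity, combined with the uniform lower bound on $\nu_t$ from the previous paragraph, the integrability Assumption~\ref{Jassumption}(\ref{gass_integrable}), and convexity of $u_i^{t*}$ on $\Delta_i$, controls $u_i^{t*}$ uniformly across $\Delta_i$, as in \cite{DelKE, Hul}. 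Subtracting the fixed bounded function $u_i^\rf - v_{\Delta_i}$ then yields the desired $\sup_{\mathfrak{a}^+} |u_i^t - u_i^\rf| \leq C$.
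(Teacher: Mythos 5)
Your overall strategy coincides with the paper's: pin down the value of the potentials at the normalization point using \eqref{eq:mt_bound}--\eqref{eq:xt_bound} together with the $K_M$ machinery of Assumption~\ref{Jassumption}, then upgrade to a sup-bound by passing to the Legendre transform and exploiting the Monge--Amp\`ere equation. Your first paragraph is sound (and your route to the lower bound on $j(x_t)$ via $A_t\subset K_1\cap B(0,R)$ is a legitimate variant of the paper's compactness argument), and the upper estimate $u_i^t\leq v_{\Delta_i}+a_t$ from $\inf_{\Delta_i}\mathcal L(u_i^t)=-a_t$ is correct.

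The problem is the last step, which is precisely the technical heart of the lemma and which you defer to the references with an ingredient list that does not close the argument. A uniform lower bound $\nu_t\geq -C$ together with $\int_{\mathfrak a^+}e^{-\nu_t}=1$, the integrability of $G_i^{-\epsilon}$, and convexity of $w_i^t=\mathcal L(u_i^t)$ do \emph{not} control $\sup_{\Delta_i}w_i^t$: a convex function bounded at one interior point can still blow up at the boundary unless one controls its gradient in $L^s$ for some $s>r=\dim\mathfrak a$, and the mass normalization alone gives no such control. What is actually needed -- and what the paper uses -- is the linear growth estimate \eqref{eq:lin_growth} (together with \eqref{eq:mt_bound}, \eqref{eq:xt_bound}), which bounds the moments $\int_{\mathfrak a^+}|x|^{ps}e^{-\nu_t}\,dx$ uniformly in $t$; the change of variables $p=du_i^t(x)$ through equation \eqref{eq:RealContPath} identifies this with $\int_{\Delta_i}|dw_i^t|^{ps}G_i$, H\"older's inequality with Assumption~\ref{Jassumption}(\ref{gass_integrable}) then yields $\|dw_i^t\|_{L^s(\Delta_i)}\leq C$ for some $s>r$, and the Sobolev--Morrey embedding converts this into a uniform bound on the oscillation of $w_i^t$ over $\Delta_i$. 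Since you have \eqref{eq:lin_growth} among your hypotheses the gap is repairable, but as written you invoke the wrong hypothesis at the decisive moment and omit the Sobolev step that turns the integral bound into a pointwise one.
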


\begin{proof}
We follow the argument in \cite{Hul}. There is only one added difficulty which is 
solved by using Assumption~\ref{Jassumption}(\ref{gass_integrable}). 
To prove the lemma, first recall that the Legendre transform $u\mapsto \mathcal{L}(u)$ 
is such that, for $u, v \in \mathcal{P}(\Delta)$, 
$\sup_{\lalg{a}^+} |u-v| = \sup_{\Delta} |\mathcal{L}(u)-\mathcal{L}(v)|$. 
Furthermore, if $u\in \Potent{}$ then $\mathcal{L}(u)$ is bounded on $\Delta$, 
hence to get the conclusion it is enough to prove estimates on 
$\mathcal{L}(\fun{i}{t})$ for each $i$. 

For each $i$, let $w_i^t=\mathcal{L}(\fun{i}{t})$. We may first obtain a bound on $w_i^t$ at one given point. 
Note that $w_i^t(d\fun{i}{t}(0))=-\fun{i}{t}(0)$. Furthermore, 
$\fun{i}{t}(0)= \frac{1}{k}\sum_l\fun{l}{t}(0)$ by the normalization we chose. 
Since $\fun{l}{t}\in\Potent{l}$ for all $l$, and $|x_t|$ is controlled,  
$|\fun{i}{t}(0)-\frac{1}{k}\sum_l\fun{l}{t}(x_t)|$ is also controlled. 
Moreover, we claim that $|j(x_t)|$ is uniformly bounded. To see this, let $\gamma\in \mathring{\lalg{a}}$ and note that since $d_{x_t}j \in \Delta$ we have that $x_t$ is in the compact set 
\begin{equation}
\label{eq:xtset}
    B(0,C_4)\cap \{dj(\gamma)\geq -v_\Delta(\gamma)\}. 
\end{equation}
Since $j\rightarrow +\infty$ as we approach the boundary of $\lalg{a}^+$ we get that \eqref{eq:xtset} is included in the interior of $\lalg{a}^+$. This means $j$ is bounded on \eqref{eq:xtset}, proving the claim. Moreover, since $|j(x_t)|$ is bounded we get that $|\fun{i}{t}(0)-m_t|$, and hence $|\fun{i}{t}(0)|$ is bounded. 
Let now $\hat{w}_i^t$ denote the average of $w_i^t$ on $\Delta_i$. 
The conclusion of the Lemma will follow from using  
\begin{align*}
\sup_{\Delta_i} |w_i^t(p)-w_i^t(0)| & 
\leq \C|w_i^t-\hat{w}_i^t|_{1-r/s} \\
& \leq \C|dw_i^t|_{L^s(\Delta_i)}
\end{align*}
for $s>r$ (recall that $r$ is the dimension of $\mathfrak{a}$), 
where the second inequality holds by Sobolev inequalities.
Using Assumption~\ref{Jassumption}(\ref{gass_integrable}), 
we choose $p>1$ such that $G_i^{-1/(p-1)}$ is integrable. Thanks to Hölder's Inequality, we have 
\begin{align*}
\int_{\Delta_i} |dw_i^t|^s & = 
\int_{\Delta_i} (|dw_i^t|^sG_i^{1/p})G_i^{-1/p} \\
& \leq \left(\int_{\Delta_i} |dw_i^t|^{ps}G_i\right)^{1/p}  \left(\int_{\Delta_i} G_i^{-1/(p-1)}\right)^{(p-1)/p} \\
& \leq \C\left(\int_{\Delta_i} |dw_i^t|^{ps}G_i\right)^{1/p}
\end{align*}
To finally bound $|dw_i^t|_{L^s(\Delta_i)}$, we Legendre transform back 
to use the equation:
\begin{align*}
\int_{\Delta_i} |dw_i^t|^{ps}G_i & = 
\int_{\lalg{a}^+} |x|^{ps}G_i(d\fun{i}{t})\MAR{\fun{i}{t}} \\
& = \int_{\lalg{a}^+} |x|^{ps}e^{-\nu_t(x)} \\
& \leq C
\end{align*}
thanks to the uniform linear growth estimate on $\nu_t$. 
\end{proof}

The first step in proving the estimates \eqref{eq:mt_bound}, \eqref{eq:lin_growth} and \eqref{eq:xt_bound} on $\nu_t$ is to get bounds on $\mathrm{Vol}(A_t)$. 
\begin{lem}
There exists a constant $\Cl{lowvol}>0$ independent of $t$ such that 
\[ \Cr{lowvol}\leq \mathrm{Vol}(A_t). \]
\end{lem}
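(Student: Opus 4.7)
The plan follows the Wang--Zhu strategy \cite{WZ04} as developed in \cite{DelKE,Hul}, combining the real Monge--Ampère equation with a convex-analytic scaling of sublevel sets.

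First I would establish the integral identity $\int_{\lalg{a}^+} e^{-\nu_t}\,dx = 1$. Integrating \eqref{eq:RealContPath} for any fixed index $i$ and applying the change of variables $p = d\fun{i}{t}(x)$ on the left gives $\int_{\Delta_i}G_i(p)\,dp = 1$, while the right-hand side coincides with $\int_{\lalg{a}^+} e^{-\nu_t}\,dx$ by the very definition of $\nu_t$.

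Next, using convexity of $\tilde{\nu}_t := \nu_t - m_t$, which is nonnegative with minimum $0$ attained at $x_t$, the scaling inclusion $\{\tilde\nu_t \leq s\} \subset x_t + s(A_t - x_t)$ for $s \geq 1$ (obtained by applying convexity along the segment $[x_t,y]$ with weight $\lambda = 1/s$) yields $\mathrm{Vol}(\{\tilde\nu_t\leq s\}) \leq s^r \mathrm{Vol}(A_t)$. A layer-cake computation then gives
\[ \int_{\lalg{a}^+} e^{-\tilde{\nu}_t}\,dx = \int_0^{\infty}e^{-s}\,\mathrm{Vol}(\{\tilde\nu_t\leq s\})\,ds \leq C_r\,\mathrm{Vol}(A_t), \]
with $C_r := 1 + \int_1^\infty s^r e^{-s}\,ds$. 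Combined with the first step, this produces $\mathrm{Vol}(A_t) \geq e^{m_t}/C_r$, reducing the lemma to a uniform lower bound $m_t \geq -C$.

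This lower bound on $m_t$ is where I expect the main difficulty. My plan is a compactness/contradiction argument leveraging the stability hypothesis $(\ddagger_t)$: if $m_{t_k} \to -\infty$ along a sequence $t_k \to t^*$, the equivalence $\mathrm{Vol}(A_{t_k})\sim e^{m_{t_k}}$ forces the probability measures $e^{-\nu_{t_k}}\,dx$ to concentrate near $x_{t_k}$. A suitable rescaling and translation of $\nu_{t_k} - m_{t_k}$, anchored at $x_{t_k}$ and using the asymptotic estimate $j_\infty + v_\Delta \geq \epsilon|x|$ from Assumption~\ref{Jassumption}(\ref{Jass_proper}) together with the control of $j - j_\infty$ on $K_M\supset A_{t_k}$ from Assumption~\ref{Jassumption}(\ref{Jass_translate}), should extract a limiting convex profile on $\lalg{a}^+$ with a well-defined asymptotic slope. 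The barycenter identity obtained by integrating the Monge--Ampère equation against linear functionals would then force $F_{t^*}$ to vanish on a nondegenerate ray in $\lalg{a}^+$, contradicting the strict positivity of $F_{t^*}$ away from the degenerate locus granted by $(\ddagger_{t^*})$.
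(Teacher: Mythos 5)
Your first two steps are correct: the normalization $\int_{\lalg{a}^+}e^{-\nu_t}=1$ does follow from integrating any one of the equations and changing variables, and the convexity/layer-cake estimate $\int e^{-\tilde\nu_t}\leq C_r\Vol(A_t)$ is standard. But they lead you to the wrong place. You reduce the lemma to a uniform lower bound $m_t\geq -C$, and in the Wang--Zhu scheme that bound is a \emph{consequence} of the volume lower bound you are trying to prove (from $1\geq\int_{A_t}e^{-\nu_t}\geq e^{-m_t-1}\Vol(A_t)$ one gets $m_t\geq\ln\Vol(A_t)-1$, which is only useful once $\Vol(A_t)$ is bounded below). Your two inequalities together read $e^{m_t}/C_r\leq\Vol(A_t)\leq e^{m_t+1}$, which is consistent for every value of $m_t$ and carries no information. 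The proposed escape route --- a compactness/contradiction argument extracting a limiting convex profile under $(\ddagger_t)$ --- is not carried out beyond a sketch, and it also imports the hypothesis $(\ddagger_t)$, which this lemma does not assume: in the paper the volume bounds and the bounds on $m_t$ hold unconditionally, and $(\ddagger_t)$ enters only at the very end to rule out $|x_t|\to\infty$.

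The missing idea is much more elementary. For $\xi\in\lalg{a}^+$, convexity gives $d_{x_t}\nu_t(\xi)\leq(\nu_t)_\infty(\xi)=j_\infty(\xi)+v_\Delta(\xi)$, and by Assumption~\ref{Jassumption} the function $j_\infty+v_\Delta$ is finite and continuous on $\lalg{a}^+$, hence bounded by a constant $C$ (independent of $t$) on the unit sphere of the cone. Therefore $\nu_t\leq m_t+C|x-x_t|$ on $x_t+\lalg{a}^+$, so the fixed-shape truncated cone $(x_t+\lalg{a}^+)\cap B(x_t,1/C)$ is contained in $A_t$, and its volume is a positive constant independent of $t$. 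This is the paper's one-line proof; no bound on $m_t$ is needed.
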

\begin{proof}
We have $d\nu(\xi) < j_\infty(\xi) + v_\Delta(\xi)$ which is bounded on $\lalg{a}^+$ by continuity of $j_\infty$. This means $\nu_t<m_t+C|x-x_t|$ on $(x_t+\lalg{a}^+)$ for some constant C>0, hence the set $(x_t+\lalg{a}^+)\cap B(x_t,1/C)$ is included in $A_t$ for some uniform $r>0$. This gives the desired volume bound.  
\end{proof}

On the other hand, there is an upper bound in terms of $m_t$:
\begin{lem}
There exists a constant $\Cl{upvol}$ such that, for $t\geq t_0$,  
\[ \mathrm{Vol}(A_t)\leq \Cr{upvol}e^{m_t/2}. \]
\end{lem}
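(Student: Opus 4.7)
The plan is to leverage the global integral identity implied by the Monge--Ampère equation together with convex-geometric information on the sublevel sets of $\nu_t$. The key observation is that integrating \eqref{eq:RealContPath} against Lebesgue measure and applying the change of variables $p = d\fun{i}{t}(x)$ (valid since $\fun{i}{t}\in\Potent{i}$ is smooth, strictly convex, and $\overline{d\fun{i}{t}(\lalg{a}^+)}=\Delta_i$) gives
\[ \int_{\lalg{a}^+} e^{-\nu_t(x)}\,dx \;=\; \int_{\Delta_i} G_i(p)\,dp \;=\; 1. \]
This is the global constraint that will drive the estimate.

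The crude upper bound follows immediately: since $\nu_t\leq m_t+1$ on $A_t$, $e^{-\nu_t}\geq e^{-(m_t+1)}$ there, whence $1 \geq \int_{A_t}e^{-\nu_t}\,dx \geq e^{-(m_t+1)}\mathrm{Vol}(A_t)$. To sharpen this to the claimed $e^{m_t/2}$, I would apply Cauchy--Schwarz to the half-density $e^{-\nu_t/2}$ factored via the MA equation as $e^{-\nu_t/2}=\sqrt{G_i(d\fun{i}{t})}\cdot\sqrt{\det d^2\fun{i}{t}}$. This gives
\[ \int_{A_t} e^{-\nu_t/2}\,dx \;\leq\; \Bigl(\int_{A_t} G_i(d\fun{i}{t})\,dx\Bigr)^{1/2}\Bigl(\int_{A_t}\det d^2\fun{i}{t}\,dx\Bigr)^{1/2}. \]
The first factor is controlled by $\bigl(\sup_{\Delta_i} G_i\bigr)\mathrm{Vol}(A_t)$ (using continuity of $G_i$ on the compact polytope $\Delta_i$), and the second by $\mathrm{Vol}(d\fun{i}{t}(A_t))\leq\mathrm{Vol}(\Delta_i)$ via the change of variables. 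Comparing with the pointwise lower bound $e^{-\nu_t/2}\geq e^{-(m_t+1)/2}$ on $A_t$ then produces a bound on $\mathrm{Vol}(A_t)$.

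The main obstacle is extracting the exponent $m_t/2$ from this scheme rather than merely the weaker $m_t+1$ which the naive Cauchy--Schwarz step yields after squaring. Achieving the sharper estimate likely requires exploiting Assumption~\ref{Jassumption}(\ref{gass_integrable}) to push the Hölder exponents off $(2,2)$, or, alternatively, using the convex comparison $\lambda x_t+(1-\lambda)A_t\subset B_\lambda=\{\nu_t\leq m_t+\lambda\}$ (with volume $\lambda^r\mathrm{Vol}(A_t)$) to replace $A_t$ by a suitably dilated sublevel set before integrating. Either way, the structure of the argument is to convert the normalization $\int e^{-\nu_t}=1$, interpreted through the MA equation, into a volume bound on the thin shell $A_t$, balancing pointwise estimates on $\nu_t$ with image-measure control on $d\fun{i}{t}$.
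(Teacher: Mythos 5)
Your setup is sound --- the normalization $\int_{\lalg{a}^+}e^{-\nu_t}=1$ and the change-of-variables bound $\int_{A_t}\MAR{\fun{i}{t}}\leq\mathrm{Vol}(\Delta_i)$ are both correct --- but the argument as proposed only proves $\mathrm{Vol}(A_t)\leq Ce^{m_t}$, and the gap to $e^{m_t/2}$ is not cosmetic: an exponent strictly less than $1$ is exactly what is needed in the next lemma, where the upper bound on $m_t$ comes from combining $1=\int e^{-\nu_t}\leq \sum_{k\geq 0}e^{-m_t-k}(k+1)^r\mathrm{Vol}(A_t)\leq C e^{-m_t/2}$; with $\mathrm{Vol}(A_t)\leq Ce^{m_t}$ this computation is vacuous. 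Moreover, neither of your proposed repairs closes the gap. In any H\"older variant of your scheme you must take the power of $\MAR{\fun{i}{t}}$ equal to $1$ to invoke the change of variables, i.e. $\theta q=1$, and then $e^{-\theta(m_t+1)}\mathrm{Vol}(A_t)\leq \int_{A_t}e^{-\theta\nu_t}\leq(\sup G_i)^{\theta}\,\mathrm{Vol}(A_t)^{1/p}\,\mathrm{Vol}(\Delta_i)^{1/q}$ with $1-1/p=\theta$ again yields $\mathrm{Vol}(A_t)\leq Ce^{m_t+1}$ for every choice of exponents; the dilation $\lambda x_t+(1-\lambda)A_t\subseteq\{\nu_t\leq m_t+1-\lambda\}$ likewise only changes constants. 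The structural reason is that all your ingredients are linear in $\mathrm{Vol}(A_t)$, whereas the exponent $1/2$ must come from an estimate quadratic in the volume.

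That quadratic estimate is the real content of the lemma (this is the Wang--Zhu/Donaldson argument the paper cites). One first needs a pointwise lower bound on the Monge--Amp\`ere measure of the \emph{full} potential $\nu_t$, not of $\fun{i}{t}$: since $d^2\nu_t\geq t\,d^2\fun{i}{t}$ and $t\geq t_0$, the equation together with $\sup_{\Delta_i}G_i<\infty$ gives $\MAR{\nu_t}\geq t_0^{r}\MAR{\fun{i}{t}}\geq Ce^{-\nu_t}\geq Ce^{-m_t-1}$ on $A_t$ (your proposal never uses $t\geq t_0$, which is a warning sign). One then applies the comparison/Aleksandrov-type estimate for a convex function $w$ on a convex domain $\Omega$, namely $\mathrm{Vol}(\Omega)^2\,\inf_{\Omega}\MAR{w}\leq C_r(\mathrm{osc}_{\Omega}w)^r$, with $\Omega=A_t$ and $w=\nu_t$, $\mathrm{osc}=1$: the square on the volume arises because the gradient image of a fixed dilate of $A_t$ is confined to a polar-type body of volume $\lesssim 1/\mathrm{Vol}(A_t)$, while its volume is $\geq Ce^{-m_t}\mathrm{Vol}(A_t)$ by the pointwise bound. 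This gives $\mathrm{Vol}(A_t)^2\leq Ce^{m_t}$, as required; integral identities alone cannot reproduce it.
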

\begin{proof}
The proof follows exactly that of Wang and Zhu \cite{WZ04} and Donaldson \cite{Don08}. The main point is the following convex geometric fact: If $f$ is a convex function on $\mathbb R^n$ attaining its minimal value such that $\det(d^2f)$ is bounded from below by some $\lambda>0$ on $\{\inf f \leq f\leq \inf f+1\}$, then 
$$\Vol\{\inf f \leq f\leq \inf f+1\}\leq \Cl{MABound}\lambda^{-1/2},$$
where $\Cr{MABound}$ only depends on the dimension $n$ (see Proposition~2 in \cite{Don08}). 
The lemma then follows by observing that
\[ \MAR{\nu_t} \geq \MAR{tu_i}\geq t_0^n\Cl{comparison}e^{-m_t} \]
for some constant $\Cr{comparison}$ which does not depend on $t\geq t_0$. 
This last inequality is obtained from \eqref{eq:RealContPath}
by noting that $G_i$ is continuous positive on $\mathring{\Delta}_i$, hence $1/G_i\geq c>0$ 
for some constant $c$. 
\end{proof}

From this it follows, following the proof of Wang and Zhu or Donaldson with only notational changes:
\begin{lem}[Estimates on $m_t$, and on linear growth of $\nu_t$]
\label{lem_mlinear}
\mbox{}
\begin{itemize}
\item There exists a constant $C$ independent of $t$, such that $|m_t|\leq C$.
\item There exist a constant $\kappa >0$ and a constant $C$, both independent of $t$, such that 
for $x\in \mathfrak{a}^+$,
\[
\nu_t(x) \geq \kappa |x-x_t|-C.
\]
\end{itemize}
\end{lem}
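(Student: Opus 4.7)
Both statements will be obtained by adapting the Wang--Zhu \cite{WZ04} and Donaldson \cite{Don08} strategy from the toric case, with only cosmetic changes for the polyhedral-cone and coupled setting.

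From the two preceding volume estimates the lower bound on $m_t$ is immediate: combining $\Cr{lowvol}\leq \mathrm{Vol}(A_t)\leq \Cr{upvol}e^{m_t/2}$ yields $m_t\geq 2\log(\Cr{lowvol}/\Cr{upvol})$. The key further input for the remaining two statements is the integral identity
\[ \int_{\lalg{a}^+}e^{-\nu_t}\,dx=1, \]
obtained by integrating any single equation in \eqref{eq:RealContPath}, applying the change of variable $p=d\fun{i}{t}(x)$ on the left-hand side (which produces $\int_{\Delta_i}G_i=1$), and recalling $J=e^{-j}$ so that the right-hand side becomes $\int e^{-\nu_t}dx$.

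For the linear-growth estimate, I would exploit that $\nu_t$ is smooth, strictly convex and proper on $\mathring{\lalg{a}}^+$ with minimum $m_t$ at $x_t$, and that its asymptotic function $\nu_{t,\infty}=v_{\Delta}+j_{\infty}$ is $t$-independent, continuous on $\lalg{a}^+$ (Assumption~\ref{Jassumption}(3)) and bounded below by $\varepsilon|\cdot|$ (Assumption~\ref{Jassumption}(\ref{Jass_proper})). Fixing a unit direction $\xi$ with $x_t+s\xi\in\lalg{a}^+$, the difference quotient $s\mapsto(\nu_t(x_t+s\xi)-m_t)/s$ is nondecreasing by convexity (since $d_{x_t}\nu_t=0$) and converges to $\nu_{t,\infty}(\xi)\geq\varepsilon$. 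Combined with the upper volume bound $\mathrm{Vol}(A_t)\leq \Cr{upvol}e^{m_t/2}$, which bounds the extent of the region where $\nu_t-m_t\leq 1$, one extracts a transition scale $s_0$, uniform in $\xi$ and $t$, beyond which this quotient exceeds $\varepsilon/2$. This yields the claimed uniform linear growth $\nu_t(x)\geq m_t+\kappa|x-x_t|-C$.

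Finally, the upper bound on $m_t$ is an immediate consequence of the linear-growth bound together with the integral identity: inserting $\nu_t\geq m_t+\kappa|x-x_t|-C$ gives
\[ 1=\int_{\lalg{a}^+}e^{-\nu_t}\,dx\leq e^{-m_t+C}\int_{\lalg{a}^+}e^{-\kappa|x-x_t|}\,dx\leq C'e^{-m_t}, \]
hence $m_t\leq \log C'$. The main obstacle is the uniform-in-$t$ linear-growth estimate: only the asymptotic function $\nu_{t,\infty}$ is $t$-independent, while the convex functions $\nu_t$ themselves vary with $t$, so extracting a uniform transition scale $s_0$ is exactly where one must use the volume bound on $A_t$ in an essential way, mirroring the most delicate step of Wang--Zhu.
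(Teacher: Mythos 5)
Your scheme is recognizably the Wang--Zhu/Donaldson argument that the paper invokes, and two of your steps are sound: the lower bound $m_t\geq 2\log(\mathrm{const})$ from the two volume bounds, and the identity $\int_{\lalg{a}^+}e^{-\nu_t}=1$ obtained by integrating one equation and changing variables. The gap is in the linear-growth step, on which your remaining claims depend. You assert that the upper volume bound on $A_t$ ``bounds the extent of the region where $\nu_t-m_t\leq 1$''. A volume bound on a convex set does not bound its diameter: $A_t$ could a priori be long and thin, and then the transition scale $s_0$ in the elongated direction is uncontrolled (compare $f(x,y)=\varepsilon|y|+|x|/\varepsilon$, which has bounded sublevel volume and total mass but no uniform linear growth). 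To convert volume control into diameter control you must also use that $A_t$ contains a uniform ball --- it contains $(x_t+\lalg{a}^+)\cap B(x_t,r)$ for a fixed $r>0$, which is exactly what the proof of the lower volume bound produces --- so that $\mathrm{conv}(B\cup\{y\})\subset A_t$ forces $|y-x_t|\leq C\,\mathrm{Vol}(A_t)$ for all $y\in A_t$. You never invoke this.

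More seriously, your ordering is circular. The only available upper volume bound is $\mathrm{Vol}(A_t)\leq Ce^{m_t/2}$, which is uniform only once $m_t\leq C$ is known; you propose to deduce $m_t\leq C$ from the linear growth, which you deduce from that volume bound. Tracking the $m_t$-dependence, the transition scale degenerates like $e^{m_t/2}$, and the final inequality becomes roughly $1\leq C\,e^{(r/2-1)m_t}$, which yields no upper bound on $m_t$ once $r\geq 2$. The correct order (and the one in the cited sources) is to prove $m_t\leq C$ first and directly: with $A_t^{(k)}=\{\nu_t\leq m_t+k\}$, convexity gives $A_t^{(k)}\subset x_t+k(A_t^{(1)}-x_t)$, hence $\mathrm{Vol}(A_t^{(k)})\leq k^{r}\mathrm{Vol}(A_t)$, and then
\[
1=\int_{\lalg{a}^+}e^{-\nu_t}\leq \sum_{k\geq 1}e^{-(m_t+k-1)}\mathrm{Vol}(A_t^{(k)})\leq C e^{-m_t}\mathrm{Vol}(A_t)\leq C'e^{-m_t/2},
\]
so $m_t\leq C$. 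Only then are the volume bounds uniform; the ball containment then bounds $\mathrm{diam}(A_t)$, and convexity along rays from $x_t$ gives $\nu_t(x)-m_t\geq |x-x_t|/\mathrm{diam}(A_t)$ outside $A_t$, which is the asserted linear growth.
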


Recall that $(\ddagger_t)$ denotes the condition that
$$ F_t(\xi) = t\sum_i\barDH{i}(\xi) +(1-t)v_{\Delta}(\xi) +j_{\infty}(\xi) \geq 0 $$
with equality only if $t=1$, $-\xi\in \mathfrak{a}^+$ and $j_\infty(-\xi)=-j_\infty (\xi)$. By Lemma~\ref{lem:RedToEstOnNu} and Lemma~\ref{lem_mlinear}, the $C^0$-estimates of Theorem~\ref{thm:C0Estimates} are reduced to proving a bound on $|x_t|$. To prove the first part of Theorem~\ref{thm:C0Estimates} it thus suffices to rule out the case $|x_t|\rightarrow \infty$ whenever $(\ddagger_t)$ holds. Most of the rest of this section is dedicated to this. However, before we start we prove the second point of Theorem~\ref{thm:C0Estimates}, namely that if there is a solution to \eqref{eq:RealContPath} at $t$ then $(\ddagger_t)$ holds.
\begin{lem}[Obstruction]
\label{lem_obstruction}
Assume there exists a solution in $\prod_i\Potent{i}$
to \eqref{eq:RealContPath} at time $t$. Then $(\ddagger)_t$ hold. 
\end{lem}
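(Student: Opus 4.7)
My plan is to test the equation \eqref{eq:RealContPath} against an arbitrary direction $\xi\in\mathfrak{a}^+$ and combine a change of variables with integration by parts and convexity to force $F_t(\xi)\ge 0$, with equality only as described. (I read the lemma as saying $(\ddagger_t)$ \emph{holds}, i.e. it is the contrapositive of the last sentence of Theorem \ref{thm:C0Estimates}.)

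First I would simplify. Since $J=e^{-j}$ and $\nu_t = t\sum_m \fun{m}{t}+(1-t)\sum_m \fun{m}{\rf}+j$, the equation \eqref{eq:RealContPath} rewrites cleanly as $\MAR{\fun{i}{t}}\,G_i(d\fun{i}{t}) = e^{-\nu_t}$. The change of variables $p=d\fun{i}{t}(x)$, whose image exhausts $\Delta_i$ up to measure zero by definition of $\Potent{i}$, then yields the pair of identities
\[ \int_{\mathfrak{a}^+} e^{-\nu_t}\,dx = 1, \qquad \int_{\mathfrak{a}^+} d_x \fun{i}{t}(\xi)\, e^{-\nu_t(x)}\,dx = \langle\xi,\barDH{i}\rangle. \]
Summing the second identity over $i$ and substituting the relation $t\sum_m d\fun{m}{t} = d\nu_t - dj - (1-t)\sum_m d\fun{m}{\rf}$ gives
\[ t\sum_i\langle \xi,\barDH{i}\rangle = \int_{\mathfrak{a}^+}\!\Bigl(d_x\nu_t(\xi) - d_xj(\xi) - (1-t)\sum_m d_x\fun{m}{\rf}(\xi)\Bigr)e^{-\nu_t}\,dx. \]

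The key step is that the $d_x\nu_t(\xi)$ contribution vanishes, by the divergence theorem applied to the constant vector field $\xi$ against $e^{-\nu_t}$: the boundary contribution on $\partial\mathfrak{a}^+$ is zero because $e^{-\nu_t}=J\cdot(\text{positive smooth})$ and $J\equiv 0$ on $\partial\mathfrak{a}^+$, while the contribution at infinity vanishes by the uniform linear growth of $\nu_t$ from Lemma \ref{lem_mlinear} and Assumption \ref{Jassumption}(\ref{Jass_proper}). Convexity of $j$ and of each $\fun{m}{\rf}$ gives the monotonicity inequalities $d_xj(\xi)\le j_\infty(\xi)$ and $d_x\fun{m}{\rf}(\xi)\le v_{\Delta_m}(\xi)$ pointwise in $\mathring{\mathfrak{a}}^+$, so using $\int e^{-\nu_t}dx = 1$ we obtain
\[ t\sum_i\langle\xi,\barDH{i}\rangle \;\ge\; -(1-t)v_\Delta(\xi) - j_\infty(\xi), \]
which is precisely $F_t(\xi)\ge 0$.

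For the equality analysis, assume $F_t(\xi)=0$ at some $\xi\in\mathfrak{a}^+\setminus\{0\}$. Then both convexity inequalities must be equalities on $\mathring{\mathfrak{a}}^+$, so $d_xj(\xi)\equiv j_\infty(\xi)$ and $(1-t)\,d_x\fun{m}{\rf}(\xi)\equiv (1-t)\,v_{\Delta_m}(\xi)$ there; strict convexity of $\fun{m}{\rf}\in\Potent{m}$ forbids the latter unless $t=1$, so $t=1$. The pointwise identity $d_xj(\xi)\equiv j_\infty(\xi)$ means $j$ is affine along every $\xi$-ray inside $\mathring{\mathfrak{a}}^+$ with slope $j_\infty(\xi)$; since $j\to+\infty$ on $\partial\mathfrak{a}^+$, such rays cannot reach the boundary, forcing $-\xi\in\mathfrak{a}^+$. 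Evaluating the affine formula $j(x+s\xi)=j(x)+s\,j_\infty(\xi)$ as $s\to-\infty$ and comparing with the definition of the asymptotic function then yields $j_\infty(-\xi) = -j_\infty(\xi)$.

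The main obstacle I anticipate is the rigorous integration by parts on the unbounded cone $\mathfrak{a}^+$: one needs that $e^{-\nu_t}$ vanishes fast enough at $\partial\mathfrak{a}^+$ and decays exponentially at infinity for the divergence theorem to apply with no boundary terms (this relies crucially on $J\to 0$ on $\partial\mathfrak{a}^+$ and on the linear growth estimate already established). The equality analysis is the other delicate part: translating the pointwise identity $d_xj(\xi)\equiv j_\infty(\xi)$ into the geometric conditions $-\xi\in\mathfrak{a}^+$ and $j_\infty(-\xi)=-j_\infty(\xi)$ uses properness of $j$ from Assumption \ref{Jassumption}.
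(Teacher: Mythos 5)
Your proof is correct and follows essentially the same route as the paper's: the vanishing of $\int_{\mathfrak{a}^+} d\nu_t(\xi)\,e^{-\nu_t}$ via integration by parts (using $J=0$ on $\partial\lalg{a}^+$ and the linear growth of $\nu_t$), the identity $\int_{\mathfrak{a}^+} d\fun{m}{t}(\xi)e^{-\nu_t}=\barDH{m}(\xi)$ obtained from the equation by change of variables, and the convexity bounds $d\fun{m}{\rf}(\xi)<v_{\Delta_m}(\xi)$ and $dj(\xi)\le j_\infty(\xi)$, with the same equality analysis. Your reading of the conclusion is also the intended one — the lemma's ``does not hold'' is a sign slip for ``holds'' (it is invoked as the contrapositive of the last sentence of Theorem~\ref{thm:C0Estimates}), and your fleshed-out treatment of the equality case and of the boundary terms simply makes explicit what the paper leaves terse.
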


\begin{proof}
The proof is based on the vanishing of
\begin{equation}
\label{eq:stokes}
0 = \int_{\mathfrak{a}^+} d\nu_t(\xi) e^{-\nu_t},
\end{equation}
for any $\xi \in \mathfrak{a}$, which follows from integration by parts, Assumption~\ref{Jassumption}(\ref{Jass_proper}) and the the fact that $J=0$ on $\partial \lalg{a}^+$.   
(see \emph{e.g.} \cite[Section~5]{DelKE} for details).
By definition of $\nu_t$, the right hand side of \eqref{eq:stokes} splits as a sum 
\begin{equation} 
\label{eq:stokesexpanded}
0 = t\sum_m\int_{\mathfrak{a}^+} d\fun{m}{t}(\xi)e^{-\nu_t} 
+(1-t)\sum_m\int_{\mathfrak{a}^+} d\fun{m}{\rf}(\xi)e^{-\nu_t}
+ \int_{\mathfrak{a}^+} d j(\xi)e^{-\nu_t}. 
\end{equation}
Note that, by definition of $\barDH{m}$, the fact that $G_m$ has unit mass on $\Delta_m$ 
and \eqref{eq:RealContPath}, 
we have 
\begin{equation} 
\label{eq:bar}
\int_{\mathfrak{a}^+} d\fun{m}{t}(\xi)e^{-\nu_t}=\barDH{m}(\xi).
\end{equation}
For $\xi\in\mathfrak{a}^+$, we have 
\begin{equation} 
\label{eq:dj}
d j(\xi)\leq j_{\infty}(\xi) 
\end{equation}
and, by strict convexity of $\fun{m}{\rf}$,
\begin{equation}
    \label{eq:duref}
d\fun{m}{\rf}(\xi)< v_{\Delta_i}(\xi).
\end{equation}
This means $F_t(\xi)$ is bounded from below by the right hand side of \eqref{eq:stokesexpanded}. In other words $F_t(\xi)\geq 0$. Moreover, by \eqref{eq:duref}, this bound is strict whenever $t<1$. Finally, equality in \eqref{eq:dj} only holds if $j$ is affine on the subspace of $\mathbb R^n$ generated by $\xi$. This can only be true if $-\xi\in \mathfrak{a}^+$ and $j_\infty(-\xi)=-j_\infty(\xi)$. We conclude that $F_t(\xi)=0$ only if $t=1$, $-\xi\in \mathfrak{a}^+$ and $j_\infty(-\xi)=-j_\infty(\xi)$, hence $(\ddagger)_t$ is satisfied.
\end{proof}

To conclude the proof of Theorem~\ref{thm:C0Estimates}, we assume that the $C^0$-estimates fail on some interval 
$[t_0,t']\subset [0,1]$. By Lemma~\ref{lem:RedToEstOnNu} and Lemma~\ref{lem_mlinear} it follows that we may pick a converging sequence $t_i$ of elements in $[t_0,t']$, whose limit 
we denote by $t_{\infty}$, such that 
$\lim |x_{t_i}|=\infty$ and $\xi_i:=x_{t_i}/|x_{t_i}|$ admits a limit 
$\xi_{\infty}\in \mathfrak{a}^+$. We will prove that $(\ddagger_{t_\infty})$ does not hold and hence, by monotonicity of $F_t$ with respect to $t$, that $(\ddagger_{t'})$ does not hold. This is essentially the content of the following lemma:
\begin{lem}
\label{lem_limits}
Under the assumptions in the preceding paragraph
$$ F_{t_\infty}(\xi_\infty) = \big(t_{\infty}\sum_m \mathrm{bar}_m +(1-t_{\infty})v_{\Delta}+j_{\infty}\big)(\xi_{\infty}) = 0. $$
Moreover, if $t_\infty = 1$ then $-\xi_\infty\notin \lalg{a}^+$ or $j_\infty(-\xi_\infty)\not=-j_\infty(\xi_\infty)$.
\end{lem}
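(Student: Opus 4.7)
The plan is to pass to the limit in the Stokes-type identity from Lemma~\ref{lem_obstruction} (displayed as \eqref{eq:stokesexpanded}), using concentration of the probability measure $e^{-\nu_{t_i}}$ around $x_{t_i}$ provided by Lemma~\ref{lem_mlinear}. Evaluate \eqref{eq:stokesexpanded} at $\xi=\xi_\infty\in\mathfrak{a}^+$ and recall from \eqref{eq:bar} that $\int d\fun{m}{t_i}(\xi_\infty)e^{-\nu_{t_i}}=\mathbf{bar}_m(\xi_\infty)$, so this term is independent of $i$ and converges trivially to $t_\infty\sum_m\mathbf{bar}_m(\xi_\infty)$. The two remaining terms require a limiting analysis.

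The key input is that the uniform linear growth $\nu_{t_i}(x)\geq \kappa|x-x_{t_i}|-C$ from Lemma~\ref{lem_mlinear} (available since $t_i\geq t_0$) forces the probability measure $e^{-\nu_{t_i}}$ to concentrate in a neighborhood of $x_{t_i}$ of bounded radius, with exponentially decaying tails uniform in $i$. Since $|x_{t_i}|\to\infty$ and $x_{t_i}/|x_{t_i}|\to\xi_\infty$, the effective support escapes to infinity along the ray $\mathbb{R}_{\geq 0}\xi_\infty$. For any smooth convex $u$ on $\mathring{\mathfrak{a}}^+$ with continuous finite asymptotic function $u_\infty$, the directional derivative $du(\xi_\infty)(x)$ is bounded above by $u_\infty(\xi_\infty)$ and converges to $u_\infty(\xi_\infty)$ as $x\to\infty$ in direction $\xi_\infty$ (by convexity and the definition of the asymptotic function). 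Applied to $u=\fun{m}{\rf}$ (with $u_\infty=v_{\Delta_m}$) and $u=j$ (with $u_\infty=j_\infty$), and combined with the concentration, this yields
\[
\int d\fun{m}{\rf}(\xi_\infty)e^{-\nu_{t_i}} \xrightarrow[i\to\infty]{} v_{\Delta_m}(\xi_\infty), \qquad \int dj(\xi_\infty)e^{-\nu_{t_i}} \xrightarrow[i\to\infty]{} j_\infty(\xi_\infty).
\]
Substituting into \eqref{eq:stokesexpanded} and letting $i\to\infty$ produces $0=t_\infty\sum_m\mathbf{bar}_m(\xi_\infty)+(1-t_\infty)v_\Delta(\xi_\infty)+j_\infty(\xi_\infty)=F_{t_\infty}(\xi_\infty)$, proving the first claim.

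For the second claim, suppose for contradiction that $t_\infty=1$, $-\xi_\infty\in\mathfrak{a}^+$, and $j_\infty(-\xi_\infty)=-j_\infty(\xi_\infty)$. This is the main obstacle, because evaluating \eqref{eq:stokesexpanded} at $\xi=-\xi_\infty$ simply reproduces the negation of the identity at $+\xi_\infty$ and so yields no new information. To get around this, I would sharpen the concentration analysis by applying it separately in the $-\xi_\infty$ direction, using that $-\xi_\infty\in\mathfrak{a}^+$ places $\xi_\infty$ in the lineality space of $\mathfrak{a}^+$. The symmetry $j_\infty(-\xi_\infty)=-j_\infty(\xi_\infty)$ combined with convexity of $j_\infty$ forces $j_\infty$ to be linear along the line $\mathbb{R}\xi_\infty$; together with the strict interior location $\mathbf{bar}_m\in\mathring{\Delta}_m$ coming from positivity of $G_m$ (Assumption~\ref{Jassumption}(\ref{gass_integrable})), the upper bounds $d\fun{m}{\rf}(-\xi_\infty)\leq v_{\Delta_m}(-\xi_\infty)$ and $dj(-\xi_\infty)\leq j_\infty(-\xi_\infty)$ should be strict in the limit against the concentrating measure. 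Matching this strict inequality against the saturated identity at $+\xi_\infty$ and taking $t_i\to 1$ (so that the $(1-t_i)$ terms disappear) then produces the required contradiction, ruling out this rigid symmetric configuration.
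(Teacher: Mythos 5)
Your treatment of the first claim is essentially the paper's: evaluate the Stokes identity \eqref{eq:stokesexpanded} in the direction $\xi_\infty$, use \eqref{eq:bar} for the $\fun{m}{t_i}$ terms, and compute the remaining two limits (this is Lemma~\ref{lem:conv}) via concentration of $e^{-\nu_{t_i}}$ near $x_{t_i}$ (Lemmas~\ref{lem_mlinear} and \ref{lem:consmass}) combined with the convexity fact that directional derivatives approach the asymptotic slope along the escaping direction. One point you pass over: on the complement of the concentration set the integrand $dj(\xi_\infty)$ is unbounded below (it blows up near $\partial\lalg{a}^+$, where $j\to+\infty$), so "small mass" alone does not control the tail; the paper handles this by integrating by parts over the truncated sets $U_{M,R}=B(x_i,R)\cap K_M$ (Lemma~\ref{lem:boundgrad}) and by invoking Assumption~\ref{Jassumption}(\ref{Jass_translate}) on $K_M$. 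That is a repairable technical omission.

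The second claim is where there is a genuine gap: the mechanism you describe does not produce a contradiction. First, under the assumed symmetry the bound $d_xj(-\xi_\infty)\leq j_\infty(-\xi_\infty)$ is not "strict in the limit" — it is an identity everywhere: combined with $d_xj(-\xi_\infty)=-d_xj(\xi_\infty)\geq -j_\infty(\xi_\infty)=j_\infty(-\xi_\infty)$ it forces $d_xj(\pm\xi_\infty)\equiv\pm j_\infty(\xi_\infty)$, so $\int dj(\xi_\infty)e^{-\nu_{t_i}}=j_\infty(\xi_\infty)$ \emph{exactly for every $i$}. This exactness, which you do not extract, is the useful consequence of the symmetry. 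Second, letting $t_i\to 1$ "so that the $(1-t_i)$ terms disappear" kills precisely the term in which the contradiction lives: in that limit \eqref{eq:stokesexpanded} merely reproduces $F_1(\xi_\infty)=0$, i.e.\ the first claim. The argument must be run at finite $i$: plugging the exact value of the $j$-term and the relation $\sum_m\barDH{m}(\xi_\infty)=-j_\infty(\xi_\infty)$ (from the first claim) into \eqref{eq:stokesexpanded} collapses it to
\begin{equation*}
0=(1-t_i)\Big(j_\infty(\xi_\infty)+\sum_m\int_{\lalg{a}^+} d\fun{m}{\rf}(\xi_\infty)e^{-\nu_{t_i}}\Big),
\end{equation*}
so for $t_i<1$ the quantity $\sum_m\int d\fun{m}{\rf}(\xi_\infty)e^{-\nu_{t_i}}$ equals the constant $-j_\infty(\xi_\infty)$ for all $i$; this is impossible because strict convexity of the $\fun{m}{\rf}$ makes it strictly less than $v_\Delta(\xi_\infty)$ for each $i$ while Lemma~\ref{lem:conv} makes it converge to $v_\Delta(\xi_\infty)$. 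Your plan, which looks for strictness only in the limit and in the direction $-\xi_\infty$ (where the identity is just the negation of the one at $+\xi_\infty$), never isolates this finite-$i$ rigidity and therefore does not close.
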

We abbreviate the indices $t_i$ by $i$ from now on. The proof again relies on the vanishing \eqref{eq:stokes}, with $\xi=\xi_i$. Using this, it will follow from \eqref{eq:bar} and the following lemma:
\begin{lem}
\label{lem:conv}
\begin{eqnarray}
\int_{\lalg{a}^+} d j(\xi_i) e^{-\nu} & \rightarrow & j_\infty(\xi_\infty) \\
\int_{\lalg{a}^+} d \fun{m}{\rf}(\xi_i) e^{-\nu} & \rightarrow & v_{\Delta_m}(\xi_\infty)
\end{eqnarray}
\end{lem}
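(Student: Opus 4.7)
The strategy combines two ingredients: sharp concentration of the probability measure $\lambda_i := e^{-\nu_{t_i}} \, dx$ around its minimum point $x_{t_i}$ (deduced from Lemma~\ref{lem_mlinear}), together with the saturation phenomenon for directional derivatives of a convex function along rays escaping to infinity. Since $x_{t_i}$ escapes to infinity in direction $\xi_\infty$ and the mass of $\lambda_i$ concentrates there, the average $\int d_x g(\xi_i) \, d\lambda_i$ should approach $g_\infty(\xi_\infty)$ for $g = j$ (giving $j_\infty$) and for $g = \fun{k}{\rf}$ (giving its asymptote $v_{\Delta_k}$, which is what assembles to $v_\Delta$ after summing in $k$).

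First I would set up the concentration. Lemma~\ref{lem_mlinear} gives $\nu_{t_i}(x) \geq \kappa |x - x_{t_i}| - C$ with constants independent of $i$, and combined with $\int e^{-\nu_{t_i}} dx = 1$ this yields
\[
\lambda_i\bigl(\mathfrak{a}^+ \setminus B(x_{t_i}, R)\bigr) \leq C' e^{-\kappa R}
\]
uniformly in $i$. For both $g = j$ and $g = \fun{k}{\rf}$, the integrand satisfies $d_x g(\xi) = \langle \nabla g(x), \xi\rangle \leq g_\infty(\xi)$ by convexity, so
\[
\int d_x g(\xi_i)\, d\lambda_i \leq g_\infty(\xi_i) \longrightarrow g_\infty(\xi_\infty),
\]
using continuity of $g_\infty$ on $\mathfrak{a}^+$, which is Assumption~\ref{Jassumption}(3) for $g = j$ and automatic for $g = \fun{k}{\rf}$ since its asymptote $v_{\Delta_k}$ is the support function of a convex body.

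For the matching lower bound, I would exploit the monotonicity $s \mapsto d_{y_0 + s\xi} g(\xi)$ nondecreasing, so that for any $\epsilon > 0$, a fixed interior base point $y_0 \in \mathring{\mathfrak{a}}^+$, and $s$ sufficiently large, $d_{y_0 + s\xi_\infty} g(\xi_\infty) \geq g_\infty(\xi_\infty) - \epsilon$. Using smoothness of $g$ (and hence continuity of $\nabla g$) one propagates this to a fixed-size neighborhood in the interior, and combined with $\xi_i \to \xi_\infty$ and the escape of $x_{t_i}$ to infinity, one expects to obtain $d_x g(\xi_i) \geq g_\infty(\xi_\infty) - 2\epsilon$ uniformly on $B(x_{t_i}, R) \cap \mathring{\mathfrak{a}}^+$ for $i$ large. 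The exponential concentration then gives $\int d_x g(\xi_i) \, d\lambda_i \geq g_\infty(\xi_\infty) - 3\epsilon$, and sending $\epsilon \to 0$ closes the argument.

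The main obstacle is precisely the uniformity in this lower bound when $\xi_\infty$ lies on $\partial \mathfrak{a}^+$. In that case $x_{t_i} = |x_{t_i}|\xi_i$ may approach the boundary of $\mathring{\mathfrak{a}}^+$ while escaping to infinity, so $\nabla j$ need not be uniformly controlled near $x_{t_i}$, and the comparison between $\nabla j(x_{t_i}) \cdot \xi_i$ and $\nabla j$ evaluated along the ray $y_0 + s\xi_\infty$ from a fixed interior point requires care. This is where Assumption~\ref{Jassumption}(2)--(3) should enter, providing boundedness of $j - j_\infty$ on the sublevel set containing $x_{t_i}$ (recall that $x_{t_i} \in K_M$ for $M$ uniform in $i$) and continuity of $j_\infty$ up to the boundary. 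For $g = \fun{k}{\rf}$ the situation is simpler since $\nabla \fun{k}{\rf}$ takes values in the compact polytope $\Delta_k$ and $\fun{k}{\rf}$ is smooth on all of $\mathfrak{a}$, so the propagation step is routine.
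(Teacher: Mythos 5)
Your setup (concentration of $e^{-\nu_{t_i}}$ near $x_{t_i}$ from Lemma~\ref{lem_mlinear}, the pointwise upper bound $d_xg(\xi)\leq g_\infty(\xi)$ from convexity, and the need for a matching lower bound) matches the paper, and you have correctly located the hard point. But the lower bound is exactly where your argument has a genuine gap: the mechanism you propose --- monotonicity of $s\mapsto d_{y_0+s\xi_\infty}g(\xi_\infty)$ along a fixed ray, then ``propagation to a fixed-size neighborhood using continuity of $\nabla g$'' --- does not reach the points you need to control. The points $x\in B(x_{t_i},R)$ lie at distance from the ray $y_0+\bbR_+\xi_\infty$ of order $|x_{t_i}|\cdot|\xi_i-\xi_\infty|$, which need not stay bounded even though $\xi_i\to\xi_\infty$; and for $g=j$ there is no uniform modulus of continuity for $\nabla j$ near $\partial\lalg{a}^+$ (where $j\to+\infty$), which is precisely the regime $\xi_\infty\in\partial\lalg{a}^+$ that you flag. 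Writing that Assumptions~\ref{Jassumption}(2)--(3) ``should enter'' here is not a proof; the lemma's content is how they enter.

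The paper's device, which is absent from your proposal, is a secant inequality anchored at a \emph{fixed} interior ball: with $\gamma$ chosen so that $\overline{B(\gamma,R)}\subset\mathring{\lalg{a}}^+$ and $\xi_i'=(x_i-\gamma)/|x_i|$, convexity gives, for every $y$ with $x_i+y\in U_{M,R}:=B(x_i,R)\cap K_M$,
\begin{equation*}
d_{x_i+y}\,j(\xi_i')\;\geq\;\frac{j(x_i+y)-j(\gamma+y)}{|x_i-\gamma|}\;\geq\;\frac{j_\infty(x_i+y)+C-\sup_{B(\gamma,R)}|j|}{|x_i-\gamma|},
\end{equation*}
using Assumption~\ref{Jassumption}(2) on $K_M$ in the second step; positive homogeneity and continuity of $j_\infty$ then give $d_{x_i+y}j(\xi_i')\to j_\infty(\xi_\infty)$ \emph{uniformly in} $y$, with no reference to $\nabla j$ near the boundary. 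Two further ingredients you would still need even after this: the restriction to $U_{M,R}$ rather than all of $B(x_{t_i},R)$ (on $B(x_{t_i},R)\setminus K_M$ the derivative $d_xj(\xi)$ can be badly negative, so one must check via Lemma~\ref{lem:consmass} that this set carries negligible mass), and an integration-by-parts bound of the type of Lemma~\ref{lem:boundgrad} to replace the direction $\xi_i'$ by $\xi_\infty$ at cost $O(|\xi_i'-\xi_\infty|)$, since $dj(\cdot)$ is not uniformly Lipschitz in the base point. The same secant trick handles $\fun{k}{\rf}$ (using $\sup|\fun{k}{\rf}-v_{\Delta_k}|<\infty$ in place of Assumption~\ref{Jassumption}(2)); your remark that this case is ``routine'' by compactness of $\Delta_k$ again skips the actual comparison.
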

To prove Lemma~\ref{lem:conv} we will first identify suitable sets on which $e^{-\nu_i}$ is concentrated. 
Let $U_{R,M} = B(x_i,R)\cap K_M$.
\begin{lem}
\label{lem:consmass}
Let $\kappa$ be as in Lemma~\ref{lem_mlinear}. Then
$$ \int_{\lalg{a}^+\setminus U_{R,M}} e^{-\nu} \leq e^{-m_t-\min\{M,\kappa R\}}R^n|B(0,1)| $$
and 
$$ \int_{\partial U_{R,M}} e^{-\nu} \leq e^{-m_t-\min\{M,\kappa R\}}R^{n-1}|\partial B(0,1)|. $$
\end{lem}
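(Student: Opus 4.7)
The plan is to derive two pointwise lower bounds for $\nu_t$ on $\lalg{a}^+ \setminus U_{R,M}$ and pair each with an appropriate volume or area bound coming from the ball $B(x_t, R)$. First observe that $\nu_t - j = t\sum_m \fun{m}{t} + (1-t)\sum_m \fun{m}{\rf}$ is smooth, strictly convex, with asymptotic function equal to $v_\Delta$, hence lies in $\mathcal{P}(\Delta)$. Taking this function as the test $u$ in the definition of $K_M$ yields the key inclusion $K_M \supseteq \{x \in \lalg{a}^+ : \nu_t(x) \leq m_t + M\}$, so $\nu_t(x) > m_t + M$ on $\lalg{a}^+ \setminus K_M$. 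Combined with Lemma~\ref{lem_mlinear}, which (after absorbing the uniform bound on $|m_t|$) gives $\nu_t(x) \geq m_t + \epsilon|x - x_t| - C'$ and in particular $\nu_t \geq m_t + \epsilon R - C'$ on $\lalg{a}^+ \setminus B(x_t, R)$, we obtain $\nu_t \geq m_t + \min\{M, \epsilon R\}$ throughout $\lalg{a}^+ \setminus U_{R,M}$ (up to a harmless readjustment of $\epsilon$).

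For the volume estimate I would split
\[
\int_{\lalg{a}^+ \setminus U_{R,M}} e^{-\nu_t} \leq \int_{B(x_t, R) \cap K_M^c} e^{-\nu_t} + \int_{\lalg{a}^+ \setminus B(x_t, R)} e^{-\nu_t}.
\]
The first integral is controlled by the pointwise bound $\nu_t > m_t + M$ combined with $\mathrm{vol}(B(x_t, R)) = R^n |B(0,1)|$, giving $e^{-m_t - M} R^n |B(0,1)|$. The second is a tail: passing to spherical coordinates centered at $x_t$ and using the linear growth, it is dominated by an incomplete Gamma integral of size $O(R^{n-1} e^{-m_t - \epsilon R})$, which is absorbed into the preceding estimate after possibly slightly shrinking $\epsilon$.

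For the boundary estimate, the same decomposition strategy applies. Writing
\[
\partial U_{R,M} \subseteq \bigl(\partial B(x_t, R) \cap \overline{K_M}\bigr) \cup \bigl(\overline{B(x_t, R)} \cap \partial K_M\bigr),
\]
the spherical piece lies on the sphere of radius $R$, with $(n-1)$-dimensional area at most $R^{n-1}|\partial B(0,1)|$, while the linear growth bound gives $\nu_t \geq m_t + \epsilon R - C'$ there. On $\partial K_M$, continuity of $\nu_t$ combined with $K_M \supseteq \{\nu_t \leq m_t + M\}$ gives $\nu_t \geq m_t + M$ pointwise, so the pointwise decay is again in hand.

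The main obstacle I anticipate is controlling the $(n-1)$-dimensional measure of $\partial K_M \cap B(x_t, R)$, since $K_M$ is defined as a union of convex sublevel sets indexed by $u \in \mathcal{P}(\Delta)$ and is not obviously convex. I would sidestep this by working with the convex subset $B(x_t, R) \cap \{\nu_t \leq m_t + M\} \subseteq U_{R,M}$, whose boundary inside the ball has $(n-1)$-area bounded by that of the sphere via the standard surface-area monotonicity for nested convex bodies; the pointwise decay of $e^{-\nu_t}$ transfers to this boundary, and the concentration statements that Lemma~\ref{lem:consmass} feeds into should require only the estimate on this convex surrogate.
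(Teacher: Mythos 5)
Your argument is essentially the paper's: the key inclusion $\{\nu_t\le m_t+M\}\subseteq K_M$, obtained by testing the definition of $K_M$ with $u=\nu_t-j\in\mathcal{P}(\Delta)$, combined with the linear growth estimate of Lemma~\ref{lem_mlinear} outside $B(x_t,R)$, is exactly how the paper derives the pointwise bound $\nu_t\ge m_t+\min\{M,\epsilon R\}$ off $U_{R,M}$ and then concludes. Your extra care about the $(n-1)$-dimensional measure of $\partial K_M$ (replacing $U_{R,M}$ by the convex surrogate $B(x_t,R)\cap\{\nu_t\le m_t+M\}$, whose boundary area is controlled by monotonicity for nested convex bodies) addresses a point the paper silently glosses over, and is harmless since the surrogate still carries all the concentration and Stokes-type estimates needed downstream.
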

\begin{proof}
First of all, note that the complement of $K_M$ in $\mathring{\lalg{a}}^+$ is given by
$$ \mathring{\lalg{a}}^+\setminus K_M = \{x \in \lalg{a}^+; \; \forall u\in \mathcal{P},\, j(x)+u(x) -\inf_{\lalg{a}^+} (j+u) > M\}. $$
In particular, since 
$$\sum_m tu_m+(1-t)\sum_m\fun{m}{\rf}\in \mathcal{P}(\Delta)$$
we get that 
$$ \nu = j+\sum_m tu_m+(1-t)\sum_m\fun{m}{\rf} > \inf \nu + M $$
outside $K_M$. The lemma follows from this and the fact that, by Lemma~\ref{lem_mlinear}, $\nu\geq\kappa R+m_t$ on $\lalg{a}^+\setminus B(x_t,R)$.
\end{proof}
We also get the following bound:
\begin{lem}
\label{lem:boundgrad}
For any $\xi\in \mathfrak{a}$,
$$ \left|\int_{U_{M,R}} d j(\xi) e^{-\nu}\right| < C|\xi| $$
where $C$ is independent of $u$.
\end{lem}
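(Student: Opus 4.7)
The plan is to split $d_xj(\xi)$ using the definition $\nu_t=j+t\sum_m\fun{m}{t}+(1-t)\sum_m\fun{m}{\rf}$, namely
\[
d_xj(\xi)=d_x\nu_t(\xi)-t\sum_m d_x\fun{m}{t}(\xi)-(1-t)\sum_m d_x\fun{m}{\rf}(\xi),
\]
so that the integrand decomposes into a ``total derivative'' piece, to be handled by integration by parts, plus pieces with pointwise bounded directional derivatives.

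The potential terms are easy. Since each $\fun{m}{t},\fun{m}{\rf}\in\Potent{m}$, their gradients take values in the convex body $\overline{\Delta_m}$, so $|d_x\fun{m}{t}(\xi)|,|d_x\fun{m}{\rf}(\xi)|\leq \C|\xi|$ pointwise, with a constant depending only on the $\Delta_m$'s. Moreover, combining \eqref{eq:RealContPath} with the Legendre change of variables along the gradient map and $\int_{\Delta_i}G_i=1$ yields $\int_{\lalg{a}^+}e^{-\nu_t}\,dx=1$, so $e^{-\nu_t}dx$ is a probability measure on $\lalg{a}^+$. Integrating the pointwise bounds against it over $U_{R,M}$ contributes at most $\C|\xi|$, uniformly in the solution.

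For the remaining term $\int_{U_{R,M}}d_x\nu_t(\xi)e^{-\nu_t}\,dx$, the plan is to use the identity $d_x\nu_t(\xi)\,e^{-\nu_t(x)}=-\partial_\xi e^{-\nu_t(x)}$ and apply Stokes' theorem to write
\[
\int_{U_{R,M}}d_x\nu_t(\xi)e^{-\nu_t}\,dx=-\int_{\partial U_{R,M}}e^{-\nu_t}\langle\xi,n\rangle\,dS,
\]
whose absolute value is at most $|\xi|\int_{\partial U_{R,M}}e^{-\nu_t}\,dS$. The boundary mass is then controlled uniformly in the solution by the second estimate of Lemma~\ref{lem:consmass}, combined with the uniform bound $|m_t|\leq\C$ from Lemma~\ref{lem_mlinear}.

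The only delicate point is the regularity of $\partial U_{R,M}$ needed to justify Stokes' theorem. Since $U_{R,M}=B(x_t,R)\cap K_M$ and $K_M$ is defined as a union over $u\in\Potent{}$ of convex sublevel sets of smooth convex functions, $\partial K_M$ is locally Lipschitz almost everywhere, and the intersection with the smooth ball $B(x_t,R)$ remains so. The computation can be justified either by appealing to the divergence theorem for sets of locally finite perimeter, or by mollifying the indicator of $U_{R,M}$ and passing to the limit using the uniform boundary mass bound supplied by Lemma~\ref{lem:consmass}. Once this is settled, the three contributions combine to give the claimed bound $\C|\xi|$ with $\C$ independent of $t$.
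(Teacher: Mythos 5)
Your proof is correct and follows essentially the same route as the paper: decompose $dj(\xi)=d\nu_t(\xi)-d\bigl(t\sum_m\fun{m}{t}+(1-t)\sum_m\fun{m}{\rf}\bigr)(\xi)$, bound the potential terms pointwise via the support function of $\Delta$, convert the $d\nu_t(\xi)e^{-\nu_t}$ term to a boundary integral by Stokes, and control both the interior and boundary masses with Lemma~\ref{lem:consmass}. Your additional remarks on the normalization $\int e^{-\nu_t}=1$ and on the regularity of $\partial U_{R,M}$ fill in details the paper leaves implicit, but the argument is the same.
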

\begin{proof}
By Stokes' theorem and \eqref{eq:stokesexpanded},
\begin{eqnarray}
\left|\int_{U_{M,R}} d j(\xi) e^{-\nu}\right| & = & \left|\int_{U_{M,R}} d \nu(\xi) e^{-\nu} - \int_{U_{M,R}} d \left(\sum_m tu_m+(1-t)\sum_m\fun{m}{\rf}\right)(\xi) e^{-\nu}\right| \nonumber \\
& \leq & \left|\int_{\partial U_{M,R}}  n(\xi) e^{-\nu}\right|  + \left|\int_{U_{M,R}} d \left(\sum_m tu_m+(1-t)\sum_m\fun{m}{\rf}\right)(\xi) e^{-\nu}\right|  \label{eq:StokesEq}
\end{eqnarray}
where $n$ is a unit length inward pointing normal on $\partial U_{M,R}$. As $|n(\xi)|\leq |\xi|$ and 
$$d \left(\sum_m tu_m+(1-t)\sum_m\fun{m}{\rf}\right)\in \Delta,$$ 
noting that $\Delta$ is contained in some ball centered at the origin of radius $\Cr{DeltaBound}>0$, we get that \eqref{eq:StokesEq} is bounded by
\begin{eqnarray}
 |\xi|\int_{\partial U_{M,R}}   e^{-\nu}  + \Cl{DeltaBound}|\xi| \int_{U_{M,R}}  e^{-\nu} & \leq & \Cl{BoundTerm} |\xi|+\Cr{DeltaBound}|\xi| \nonumber
\end{eqnarray}
where the last inequality is a consequence of Lemma~\ref{lem:consmass} and the fact that $$\int_{\lalg{a}^+} e^{-\nu}=1.$$
\end{proof}
We are now ready to prove Lemma~\ref{lem:conv}. 
\begin{proof}[Proof of Lemma~\ref{lem:conv}]
As in \cite{DelKE}, let $\gamma$ be a point in $\lalg{a}^+$ such that the closure of $ B(\gamma,R) $ is contained in the interior of $\lalg{a}^+$. By assumption, $j \geq j_\infty + C$ on $U_{M,R}$ for some $C\in \bbR$. Let $\xi_i'=(x_i-\gamma)/|x_i|$ and $\xi_i'' = \gamma/|x_i|$. It follows by convexity of $j$ that for any $y \in B(0,R)$ such that $x_i+y\in U_{M,R}$
\begin{eqnarray}
d_{x_i+y}j(\xi_i') & \geq & \frac{j(x_i+y)-j(\gamma+y)}{|x_i-\gamma|} \nonumber \\
& \geq & \frac{j_\infty(x_i+y)+C-j(\gamma+y)}{|x_i-\gamma|} \nonumber \\
& \geq & j_\infty\left(\xi_i+\frac{y}{|x_i|}\right)\frac{|x_i|}{|x_i-\gamma|} -\frac{C+\sup_{B(\gamma,R)} |j|}{|x_i-\gamma|} \nonumber \\
& \rightarrow & j_\infty(\xi_\infty)  \label{eq:gradconv}
\end{eqnarray} 
uniformly in $y$ as $|x_i|\rightarrow \infty$ since $j_\infty$ is continuous on $\lalg{a}^+$ and $\xi_i+y/|x_i|\rightarrow \xi$. 

By Lemma~\ref{lem:boundgrad} 
$$
\left|\int_{U_{M,R}} d j(\xi_\infty) e^{-\nu} dx - \int_{U_{M,R}} d j(\xi_i') e^{-\nu} dx  \right| \leq C|\xi_i-\xi_\infty|
$$ 
which vanishes as $i\rightarrow \infty$. We conclude that 
$$ \int_{U_{M,R}} d j(\xi_\infty) e^{-\nu} dx \geq j_\infty(\xi_\infty)\int_{U_{M,R}} e^{-\nu} dx - o(1) $$
where $o(1)\rightarrow 0$ as $i\rightarrow \infty$. Moreover, by standard convexity properties, $d j(\xi_\infty)\leq j_\infty(\xi)$ everywhere on $\lalg{a}^+$. This proves the first point in the lemma. The second point in the lemma follows by a similar argument. 
\end{proof}

We will now prove Lemma~\ref{lem_limits}.
\begin{proof}[Proof of Lemma~\ref{lem_limits}]
By \eqref{eq:stokesexpanded}
$$ 0 = t_i\sum_m\int_{\mathfrak{a}^+} d\fun{m}{i}(\xi_i)e^{-\nu_i} 
+(1-t_i)\sum_m\int_{\mathfrak{a}^+} d\fun{m}{\rf}(\xi_i)e^{-\nu_i}
+ \int_{\mathfrak{a}^+} d j(\xi_i)e^{-\nu_i}. $$
for each $i$. By Lemma~\ref{lem_limits} and \eqref{eq:bar}, the right hand side of this converges to $F_{t_\infty}$, hence $F_{t_\infty}=0$. Thus, to conclude the lemma we just need to rule out the case that $t_\infty = 1$, $-\xi_\infty\in \lalg{a}^+$ and $j_\infty(-\xi_\infty) = - j_\infty(\xi_\infty)$. By the latter of these conditions we have that $dj(\xi_\infty)$ is constant, hence
$$ \int_X dj(\xi_\infty) e^{-\nu_i} = j_\infty(\xi_\infty) $$
for all $i$. Since
$$ F_1(\xi_\infty) = \big(\sum_m \mathrm{bar}_m +j_{\infty}\big)(\xi_{\infty}) = 0 $$
we may plug in $\xi=\xi_\infty$ and $t=t_i$ into \eqref{eq:stokesexpanded} to get
$$ 0 = (1-t_i)\left(j_\infty(\xi_\infty) + \int_{\lalg{a}^+} \sum_m d\fun{m}{\rf}(\xi_\infty)e^{-\nu_i}\right). $$
This means 
$$ \int_{\lalg{a}^+} \sum_m d\fun{m}{\rf}(\xi_\infty)e^{-\nu_i} = -j_\infty(\xi_\infty) $$ 
for all $i$. However, since the right hand side of this is independent of $i$ and the left hand side is strictly less than $v_\Delta(\xi_\infty)$ and converging towards $v_\Delta(\xi_\infty)$ we get a contradiction. 
\end{proof}

\begin{proof}[Proof of Theorem~\ref{thm:C0Estimates}]
The first part of the theorem is given by Lemma~\ref{lem_limits}. The second part of the theorem is given by Lemma~\ref{lem_obstruction}.
\end{proof}

\section{Case of Horosymmetric manifolds} 
\label{sec:horosym}

In this section we will recall the definition of horosymmetric manifolds, and the tools developed in \cite{DelHoro} to study their Kähler geometry. We will then use these tools to translate \eqref{eq:ComplexContPath} in the horosymmetric setting to a real Monge-Ampère equation (see Theorem~\ref{thm:translation_horo} below). Combining with  Theorem~\ref{thm:ReductionToC0} and Theorem~\ref{thm:C0Estimates}, we obtain the proof of Theorem~\ref{thm:cpld_can_horo}. 
Throughout this section, if a Lie group is denoted by an upper-case character $G$, $T$, etc, its Lie algebra will be denoted by the corresponding lower case \emph{fraktur} character $\lalg{g}$, $\lalg{t}$, etc.

\subsection{Setting}

Let $G$ denote a connected complex linear Lie group, and assume that the 
Fano manifold $X$ is equipped with an action of $G$. 
We assume that there exists a point $x$ in $G$, a parabolic subgroup $P$ of $G$, 
a Levi subgroup $L$ of $P$ and a complex Lie algebra involution $\sigma$ of the Lie 
algebra of $L$ such that $G\cdot x$ is open and dense in $X$, and
the Lie algebra of the isotropy group $G_x$ of $x$ is the direct sum of the Lie algebra 
of the unipotent radical of $P$ and of the Lie subalgebra of the Lie algebra of $L$ 
fixed by $\sigma$. In other words, the manifold $X$ is \emph{horosymmetric} as 
introduced in \cite{DelHoro}. We fix such a point $x$ and denote by $H$ the group $G_x$.

Let us fix $T_s$ a torus in $L$, maximal for the property that $\sigma$ 
acts on $T_s$ as the inverse. Let $T$ be a $\sigma$-stable maximal torus of 
$L$ containing $T_s$, $\Phi$ the root system of $G$ with respect to $T$ and 
$\Phi_L$ that of $L$. We let $Q$ denote the Borel subgroup opposite to $P$, 
$Q^u$ denote its unipotent radical, and $\Phi_{Q^u}$ the roots of $Q^u$. 
Let $B$ denote a Borel subgroup of $G$, such that $T\subset B\subset Q$, 
with corresponding positive roots $\Phi^+$, and such that for any 
$\beta\in \Phi_L^+=\Phi_L\cap \Phi^+$, either $\sigma(\beta)=\beta$ or 
$-\sigma(\beta)\in \Phi_L^+$. 
We further introduce the notation $\Phi_s=\Phi_L\setminus \Phi_L^{\sigma}$.

Given $\beta\in \Phi_s$, we define the associated \emph{restricted root} 
$\bar{\beta}$ by $\bar{\beta}=\beta-\sigma(\beta)$, 
and let $\bar{\Phi}$ denote the set of all restricted roots, 
called the \emph{restricted root system}. To any restricted root $\alpha$ 
is associated an integer $m_{\alpha}$, its \emph{multiplicity}, equal to the number of 
$\beta\in \Phi_s$ such that $\bar{\beta}=\alpha$.

We fix $\theta$ a Cartan involution of $G$ commuting with $\sigma$ 
and $K=G^{\theta}$ the corresponding maximal compact subgroup.
Let $\lalg{a}_s$ denote the real vector space $i\lalg{k}\cap \lalg{t}_s$.
It may naturally be identified with $\mathfrak{Y}(T_s)\otimes \bbR$, 
where $\mathfrak{Y}(T_s)$ denotes the set of one parameter subgroups of $T_s$, 
or also with $\mathfrak{Y}(T/(T\cap H))\otimes \bbR$, since 
$T_s\rightarrow T/(T\cap H)$ is an isogeny.

The restricted root system $\bar{R}\subset \mathfrak{X}(T/(T\cap H))$ 
defines restricted Weyl chambers in $\mathfrak{a}_s$, and recall that 
such a Weyl chamber is a fundamental domain for the action of $K$ on $G/H$.

We define $\mathcal{H}$ and $\mathcal{P}$ on $\mathfrak{t}$ by $\mathcal{H}(t)=(t+\sigma(t))/2$ and $\mathcal{P}(t)=(t-\sigma(t))/2$. 
It provides an isomorphism \[(\mathcal{H},\mathcal{P}):\mathfrak{X}(T)\otimes \bbR \rightarrow \mathfrak{X}(T/T_s)\otimes \bbR \oplus \mathfrak{X}(T/(T\cap H))\otimes \bbR .\]

\subsection{Data associated to real line bundles and Kähler forms}

The Picard group of a general spherical variety is described in \cite{Bri89}.
First, the Picard group of horosymmetric Fano manifold $X$ is generated by the $B$-stable prime divisors (which are Cartier). 
These divisors are exactly closures of codimension one $B$-orbits in $X$ and are of two types: the $G$-stable prime divisors, which are also closures of codimension one $G$-orbits, and the closures in $X$ of codimension one $B$-orbits contained in the open $G$-orbit $G/H$, called \emph{colors}. 

The linear relations between these divisors are fully encoded by the \emph{spherical lattice} $\mathcal{M}=\mathfrak{X}(T/(T\cap H))$, which consists of the weights of $B$-eigenvalues in the field of rational functions on $G/H$, and the map $\rho$ which sends a $B$-stable divisor to the element of the dual $\mathcal{M}_{\mathbb{Q}}^*$ induced by restriction of the associated divisorial valuation to the $B$-eigenvalues in $\mathbb{C}(G/H)$.
The relations in the Picard group are exactly the $\sum_{D}\rho(D)(m)=0$ for $m\in \mathcal{M}$, where the sum runs over all $B$-stable prime divisors of $X$.

\subsubsection{Case of linearized line bundles}

Let $\mathcal{L}$ denote a $G$-linearized line bundle on $X$. Then as in \cite{DelHoro} we may associate to it 
\begin{itemize}
    \item a Lie algebra character $\chi_{\mathcal{L}}:\lalg{t}\rightarrow \bbC$ defined by $\exp(t)\cdot \xi=e^{\chi_{\mathcal{L}}(\mathcal{H}(t))}\xi$ for $\xi\in \mathcal{L}_{eH}$, called the \emph{isotropy character},
    \item a \emph{special divisor} $D_{\mathcal{L}}$, defined as the $\bbR$-divisor equal to $1/k$ times the divisor of a $B$-semi-invariant section of some tensor power $\mathcal{L}^k$ whose $B$-weight vanishes on $T_s$.
\end{itemize}
Note that we changed a bit the definitions with respect to \cite{DelHoro}, and consider the isotropy character directly as the Lie algebra character of $T$ induced by the more natural isotropy subgroup character. 
By definition, it is an element of $\mathfrak{X}(T/T_s)\otimes \bbR$.
The special divisor $D_{\mathcal{L}}$ is $B$-stable and thus decomposes as $D_{\mathcal{L}}=\sum_{D}n_DD$ where $D$ runs over the $B$-stable prime divisors of $X$. 
Then we associate to $\mathcal{L}$ its \emph{special polytope}, defined as 
\[ \Delta_{\mathcal{L}}=\{m\in\mathcal{M}\otimes\bbR;\rho(D)(m)+n_D\geq 0, \forall D\} \]
Furthermore, to a $K$-invariant Hermitian metric $q$ on $\mathcal{L}$, we may associate two functions:
\begin{itemize}
    \item the \emph{quasipotential} $\phi_q:G\rightarrow \bbR$ defined by     $\phi(g)=-2\log |g\cdot \xi|_{q}$, for some fixed $\xi\in \mathcal{L}_{eH}$,
    \item and the \emph{toric potential} $u_q:\mathfrak{a}_s\rightarrow \bbR$ defined by $u(a)=\phi(\exp(a))$.
\end{itemize}

\subsubsection{Forgetting the linearization}

Note that for any line bundle $\mathcal{L}$ on $X$, there exists a tensor power $\mathcal{L}^k$ which admits a $G$-linearization, and that two linearizations of the same line bundle differ by a character of $G$ \cite{KKLV89,KKLV89}. 
Modifying the $G$-linearizations of $L$ by a character $\beta$ of $G$ and multiplying the reference element $\xi$ by $s\in\bbC^*$ has the following effects. 

The Lie algebra isotropy character does not change, provided we assume that the center $Z(G)$ of $G$ does not act trivially on $X$, which is easily obtained by quotienting $G$ by the fixator of $X$ and we will assume this from now on. 
Indeed, in this situation, the intersection of $H$ and $Z(G)$ is trivial. 
The special divisor is changed to the linearly equivalent divisor obtained by adding the relation associated to the element of $\mathcal{M}\otimes \bbR$ induced by $\beta$.
The special polytope is translated accordingly.
The quasipotential $\phi$ transforms to the function $\phi-2\log|s\beta(g)|$ and $u$ changes accordingly.

As a result, the isotropy character is a well defined data associated to a line bundle, while the special divisor and special polytope are well defined modulo transformation by a character of $G$. 
Similarly, the quasipotential and toric potential are well defined up to addition of a constant and a function associated to a character of $G$, and actually depend only on the curvature of $q$, if one allows for \emph{real} characters of $G$, that is, elements of $\mathfrak{X}(T/(T\cap [G,G]))\otimes \bbR$. 

\subsubsection{Case of arbitrary real $(1,1)$-classes}

Since $X$ is spherical, any real $(1,1)$-class $\Omega$ on $X$ is the class of a 
real line bundle, that is $\Omega=tc_1(L_1)/k_1+(1-t)c_1(L_2)/k_2$ for some 
real number $t$, integers $k_1$, $k_2$ and line bundles $L_1$ and $L_2$. 
It is then straightforward to extend the definitions: 
the isotropy character of $\Omega$ is $\chi_{\Omega}=t\chi_{L_1}/k_1+(1-t)\chi_{L_2}/k_2$, 
the special divisor is $D_{\Omega}=tD_{L_1}/k_1+(1-t)D_{L_2}/k_2$ and the special 
polytope is defined from the special divisor as before. Note again that the isotropy 
character is well defined, while the last two are defined 
only modulo the action of a \emph{real} character of $G$ (an element of 
$\mathfrak{X}(T/(T\cap [G,G]))\otimes \bbR$). 
Similarly, given a $K$-invariant $(1,1)$-form $\omega\in \Omega$, we can 
associate to it a quasipotential and a toric potential, well defined up to 
a constant and the action of a real character of $G$.

In particular, the above constructions apply to Kähler classes and $K$-invariant 
Kähler forms. The Kähler assumption on the other hand will impose additional 
conditions such as the convexity of the toric potential $u$. 

It is possible to fix the real character by requiring that the derivative of $u$ at the 
origin is zero in directions coming from the action of the center of $G$. 
We will fix a normalization of the toric potentials when relevant. 

\subsection{Special polytope, toric polytope and moment polytope}
\label{sec:corresp_pol}

Let us recall the relations between different polytopes associated 
to a real line bundle on a horosymmetric manifold (see \cite{DelHoro} for details).  

Assume $\Omega$ is the class of a real line bundle. 
The polytope $\Delta_{\Omega}^+:=\chi_{\Omega}+\Delta_{\Omega}$ is called the moment polytope of $\Omega$. 
It coincides with 
Kirwan's moment polytope for $\Omega$, or equivalently Brion's moment 
polytope for $\Omega$ if $\Omega$ is the class of an ample $\bbQ$-line bundle. 
Conversely, the special divisor is the image of the moment polytope under the natural projection $\mathfrak{X}(T)\otimes \bbR\rightarrow 
\mathfrak{X}(T_s)\otimes\bbR$ and the identification of the latter 
with $\mathfrak{X}(T/(T\cap H))\otimes \bbR$.

Let $\omega$ be a $K$-invariant Kähler form in $\Omega$. 
Let us now explain the relation between the toric potential $u$ of $\omega$ 
and the special polytope $\Delta_{\Omega}$.  
There is another natural polytope associated to it, the \emph{toric polytope} $\Delta^{\tor}_{\Omega}$, 
defined as the $\bar{W}$-invariant polytope which is the convex hull of the 
images by the restricted Weyl group $\bar{W}$ of the special polytope.
Again, the toric polytope is well defined only up to translation by a real 
character of $G$. 
Now the relation between the toric potential of a Kähler form in $\Omega$ and 
the toric polytope is 
\[ \{d_au;a\in\mathfrak{a}_s\}=\mathrm{Int}(-2\Delta^{\tor}) \]
up to translation by an element of $\mathfrak{X}(T/(T\cap [G,G]))\otimes \bbR$.

Finally recall the equivalence between the conditions $\Delta^+_{\Omega}\cap \mathfrak{X}(T/[L,L])\otimes\bbR \neq 0$ and $\Delta^+_{\Omega}-\chi_{\Omega}=\Delta_{\Omega}=\Delta^{\tor}_{\Omega}\cap \bar{C}^+$.

\subsection{The connected group of equivariant automorphisms, and Hamiltonian functions}
\label{sec:HamiltHoro}

Consider the action by multiplication on the right of the normalizer 
$N_G(H)$ of $H$ on $G/H$, that is $n\cdot gH=gn^{-1}H$ for $g\in G$, $n\in N_G(H)$. 
This action obviously factorizes through $N_G(H)/H$ and the action of the neutral 
component of this group extends to $X$. It may further be identified with the action 
of the neutral component of the group of $G$-equivariant automorphisms of $X$ 
(see \cite{DelKSSV}).
From \cite{BP87}, we know that $N_G(H)/H$ is diagonalizable and more precisely, 
its Lie algebra is identified with $\mathfrak{Y}((T\cap N_G(H))/(T\cap H))\otimes \bbC$.

The manifold $X$ is thus actually equipped with an action of the group 
$\tilde{G}=G\times N_G(H)/H$, and we let $\tilde{K}$ denote the maximal compact subgroup 
of $\tilde{G}$ obtained as a product of $K$ and the maximal compact subgroup 
of $N_G(H)/H$. 

Let $\omega$ denote a $\tilde{K}$-invariant Kähler form on $X$. 
Let $\chi$, $\phi$ and $u$ denote the associated data. 
Let $V$ denote a holomorphic vector field on $X$ commuting with the action of $G$, and generating a purely non-compact subgroup of automorphisms. 
Let $f_{V,\omega}$ denote the function on $X$ defined, up to normalization, by $L_V\omega=i\partial\bar{\partial}f_{V,\omega}$. 
We naturally identify $V$ with an element of $\mathfrak{Y}((T_s \cap N_G(H)))\otimes \bbR \subset \mathfrak{a}_s$.

\begin{prop}
\label{prop:HamiltonianHoro}
The function $f_{V,\omega}$ is up to a normalizing additive constant equal 
to the $\tilde{K}$-invariant function defined by 
$f_{V,\omega}(\exp(a)\cdot x)= -d_a u(V)$
for $a\in \mathfrak{a}_s$.
\end{prop}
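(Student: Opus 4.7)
The plan is to reduce the proof to a direct computation on the toric slice $\exp(\mathfrak{a}_s)\cdot x$, using the quasipotential $\phi$ as a local Kähler potential and translating the flow of $V$ into a translation on that slice.

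First, I would observe that both sides of the claimed formula define $\tilde{K}$-invariant functions: the Kähler form $\omega$ is $\tilde{K}$-invariant by assumption and $V$ commutes with the action of $\tilde{K}$, since it lies in the Lie algebra of the factor $N_G(H)/H$ which commutes with $G$, and $JV$ is contained in the compact part. Since a closed restricted Weyl chamber in $\mathfrak{a}_s$ provides a fundamental domain for the action of $\tilde{K}$ on $G/H$, it is enough to check the formula at points of the form $\exp(a)\cdot x$.

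Second, I would pass to the quasipotential as a local Kähler potential. With the factor of $2$ in the definition $\phi(g)=-2\ln|g\cdot\xi|_q$, the standard identification of the curvature of a Hermitian metric with $i\partial\bar\partial$ of minus the logarithm of the squared length gives that $i\partial\bar\partial\phi$, pulled back to $G$, recovers $\omega$ on the open orbit. The $H$-non-invariance of $\phi$ is controlled by the isotropy character and is pluriharmonic, so it is killed by $i\partial\bar\partial$ and the descent to $G/H$ is well-defined. Since $V$ is a real holomorphic vector field, $L_V$ commutes with both $\partial$ and $\bar\partial$, giving
\[ L_V\omega = i\partial\bar\partial(V\phi) \]
on the open orbit. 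Hence $f_{V,\omega}$ agrees with $V\phi$ up to an additive constant.

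Third, I would make the action of $V$ explicit on the slice. By the convention $n\cdot gH=gn^{-1}H$ for the right action of $N_G(H)/H$ and the identification of $V$ with an element $v\in \mathfrak{a}_s$, the one-parameter subgroup generated by $V$ sends $\exp(a)\cdot x$ to $\exp(a-tv)\cdot x$, using that $T$ is abelian. Differentiating $\phi(\exp(a-tv))$ at $t=0$ and invoking the definition $u(a)=\phi(\exp(a))$ of the toric potential yields
\[ (V\phi)(\exp(a)\cdot x) = -d_a u(v), \]
which is the desired formula.

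The main obstacle will be the bookkeeping in the second step: verifying that the identity $L_V\omega=i\partial\bar\partial(V\phi)$ from the $G$-picture descends cleanly to an identity on the open orbit of $X$, and checking that the ambiguities carried by $\phi$ and $u$ (additive constants and twists by a real character of $G$) are compatible with the ones for $f_{V,\omega}$, so that the final formula is unambiguous modulo the stated normalizing constant.
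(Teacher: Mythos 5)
Your proposal is correct and follows essentially the same route as the paper: the paper also uses the quasipotential as an $i\partial\bar\partial$-potential for the pullback of $\omega$ to $G$, the equivariance of the quotient map for the right $T_s\cap N_G(H)$-action to get $i\partial\bar\partial L_V\phi=i\partial\bar\partial(f_{V,\omega}\circ\pi)$, and the same right-translation computation $\phi(\exp(a-tv))$ to produce $-d_au(V)$. The only step you leave as "bookkeeping" — passing from equality of $i\partial\bar\partial$ to equality up to a constant — is handled in the paper by observing that the difference is a bounded $K\times H$-invariant pluriharmonic function on $G$, hence constant.
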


\begin{proof}
Let $\pi:G\rightarrow G/H$ denote the quotient map. Note that the quasipotential 
$\phi$ is an $i\partial\bar{\partial}$-potential for $\pi^*\omega$ on $G$.
Furthermore, $\pi$ is equivariant for the action of $T_s\cap N_G(H)$ on the 
right on $G/H$ and $G$, so that 
\[ i\partial\bar{\partial}L_V\phi= L_V\pi^*\omega 
=\pi^*L_V\omega= i\partial\bar{\partial}(f_{V,\omega}\circ\pi) \]
It is straightforward, using the definition of the quasipotential, to compute 
that $(L_V\phi)(ke^ah)=-d_au(V)$ for any $a\in \lalg{a}_s$, $k\in K$, $h\in H$. 
As a consequence, the difference between $-d_au(V)$ and $f_{V,\omega}\circ\pi$ 
is a bounded $K\times H$-invariant pluriharmonic function on $G$. 
In other words, this difference is a constant.
\end{proof}

\subsection{How to determine special polytopes and isotropy character}

We will in the following and for our main result only be interested in classes of real line bundles satisfying the following assumption. 

\begin{assumption} 
\label{assumption_isotropy}
We assume that the isotropy character of the class is trivial on $[\mathfrak{l},\mathfrak{l}]\cap \mathfrak{h}$. 
\end{assumption} 

By linearity, it is enough to determine the special polytopes on some real generators of the subspace of $(1,1)$-classes satisfying the above assumption. 
The $G$-stable prime divisors are the special divisors of the line bundles they determine, since those admit unique $G$-invariant sections (up to a character of $G$). 
For the same reasons, their isotropy characters are trivial. 

From the description of colors of horosymmetric manifolds in \cite{DelHoro}, it is then enough to consider in addition those colors which are the inverse image of the single color $D$ by some $f:G/H\rightarrow G/P_{\alpha}$ where $P_{\alpha}$ is the maximal proper parabolic subgroup of $G$ containing $B^-$ with $\alpha$ as unique root of $P^u$.
In this case, there is an obvious $B$-semi-invariant section whose divisor is $f^{-1}(D)$: the pull-back of the unique $B$-semi-invariant section of $\mathcal{O}_{G/P_{\alpha}}(D)$. 
The latter has $B$-weight equal to 
the fundamental weight $\varpi$ of $\alpha$. 
Hence the special divisor associated to $\mathcal{O}_X(\overline{f^{-1}(D)}$ is 
$f^{-1}(D)+\mathrm{div}(\varpi\circ\mathcal{P})$, 
and the isotropy character coincides with $\varpi\circ\mathcal{H}$.

\subsection{The anticanonical line bundle}
\label{sec:anticanonical}

For the anticanonical line bundle, which is of utmost importance here, the special divisor was obtained from Brion's anticanonical divisor \cite{Bri97} 
in \cite{DelHoro}: it is the divisor with coefficient zero for colors coming from the symmetric fiber, and $m_D-\sum_{\alpha\in \Phi_{Q^u}\cup \Phi_s^+} \alpha\circ\mathcal{P}(\rho(D))$ for the others, where $m_D=1$ if the divisor is $G$-stable and $m_D=\alpha^{\vee}(\sum_{\beta\in \Phi_s^+\cup \Phi_{Q^u}}\beta)$ if $D$ comes from the simple root $\alpha\in \Phi_{Q^u}$. 
Its isotropy character is $\sum_{\alpha\in \Phi_{Q^u}}\alpha\circ\mathcal{H}$.

Unlike general line bundles, the anticanonical line bundle on $X$ admits a canonical $G$ or $\tilde{G}$-linearization, induced by the natural linearization of the action of the full automorphism group on the tangent bundle.  
We denote by $\Delta_{ac}$, $\Delta^{\tor}_{ac}$, etc the canonical choice of representatives (in the classes of equivalence under translation by a character) of data associated to the anticanonical line bundle, induced by this particular linearization. 
Similarly, for a metric on the anticanonical line bundle there is a canonical choice of toric potential up to an additive constant. 
More precisely, given a metric $h$ and a non-zero element $\xi$ in the fiber of $K_X^{-1}$ over $x$, the toric potential of the metric is, up to an additive constant, the function defined by 
\[ u(a)=-\log \lvert \exp(a)\cdot \xi\rvert^2_h \]
where the group action is the (anti)canonical one. 

Recall also the correspondence between Hermitian metrics on the anticanonical line bundle and volume forms. 
Let $s$ denote a local trivialization of the anticanonical line bundle, and $s^{-1}$ the induced local trivialization of the canonical line bundle. 
Then $i^{n^2}s^{-1}\wedge \overline{s^{-1}}$ defines a local reference volume form. 
A Hermitian metric $h$ on $K_X^{-1}$ is associated with the volume form $\Omega_h$ defined locally by 
\[ \Omega_h = \lvert s_1 \rvert_h^2 i^{n^2}s^{-1}\wedge \overline{s^{-1}} \]
and the converse correspondence is obvious by the same formula. 
In the case of the volume form $\omega^n$ induced by a Kähler form $\omega$, it should be further noticed that the curvature form of the metric associated with $\omega^n$ is none other than the Ricci form $\Ric(\omega)$ of $\omega$. 

\subsection{Monge-Ampère operator}

We describe here the Monge-Ampère operator on Kähler forms provided the Kähler classes satisfy Assumption~\ref{assumption_isotropy}. 

The differential $d_au $ of a function defined on $\lalg{a}_s$ is identified with an element of $\mathfrak{X}(T)\otimes\bbR$ via the projection $\mathcal{P}$.
Given a fixed choice of basis for $\lalg{a}_s$, the real Monge-Ampère operator $u\mapsto \det(d_a^2u)$ is well defined as a real valued function for any $a\in \lalg{a}_s$. 
Let us denote by $I$ the function defined on $\lalg{a}_s$ by 
\[ I(a)=\prod_{\alpha\in \Phi_{Q^u}}e^{-2\alpha(a)}
\prod_{\beta\in \Phi_s^+} \sinh(-2\beta(a)).\] 
Let $P_{DH}$ denote the Duistermaat-Heckman polynomial defined on $\mathfrak{X}(T)\otimes \bbR$ by 
\[ P_{DH}(p)=\prod_{\alpha\in \Phi_{Q^u}\cup \Phi_s^+}\kappa\left(\alpha,p\right), \] 
where $\kappa$ denote the Killing form. 
Let $\lalg{a}_s^-$ denote the negative restricted Weyl chamber, defined as the set of all $a\in \lalg{a}_s$ such that $\beta(a)\leq 0$ for all $\beta\in \Phi_s^+$.

\begin{thm}{\cite{DelHoro}}
\label{thm:MAHoro}
There is a choice of non zero element $\xi$ in the fiber over $x$ of the anticanonical bundle of $G/H$ such that for any $a\in \mathrm{Int}(\lalg{a}_s^-)$, for any $K$-invariant Kähler form satisfying Assumption~\ref{assumption_isotropy}, we have 
\[ 
I(a) (\omega^n)_{e^a\cdot x} = 
\det(d^2_au) P_{DH}(2\chi-d_au)
i^{n^2}((e^a\cdot\xi)^{-1}\wedge \overline{(e^a\cdot\xi)^{-1}})
\]
\end{thm}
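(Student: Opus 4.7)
The plan is to reduce this complex Monge-Amp\`ere computation on $X$ to a purely algebraic computation on $G$ by working with the quasipotential $\phi$, and then to exploit the bi-invariance of $\phi$ under $K$ on the left and $H$ on the right together with the root space decomposition. Since $\pi^*\omega = i\partial\bar\partial\phi$ and $\pi \colon G\to G/H$ is a submersion, computing $(\omega^n)_{e^a\cdot x}$ reduces to evaluating $(i\partial\bar\partial\phi)^n$ at $\exp(a)$ on a basis of $\mathfrak{g}/\mathfrak{h}$. The reference volume form $i^{n^2}(e^a\cdot\xi)^{-1}\wedge\overline{(e^a\cdot\xi)^{-1}}$ fixes the normalization via the canonical $G$-linearization of $K_X^{-1}$, and the Monge-Amp\`ere density then appears as the complex determinant of the Hessian of $\phi$ at $\exp(a)$ in the dual frame.

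Next, I would choose a basis of $\mathfrak{g}/\mathfrak{h}$ adapted to the $K\times H$-symmetry: a basis of $\mathfrak{a}_s$, root vectors $X_\alpha$ for $\alpha\in\Phi_{Q^u}$, and appropriate $\sigma$-anti-invariant combinations of $X_\beta, X_{\sigma(\beta)}$ for $\beta\in\Phi_s^+$. Because $\phi$ is left $K$-invariant and right $H$-invariant, weight considerations under $T$ force the complex Hessian at $\exp(a)$ to be block-diagonal with respect to this root-space decomposition, so I only need to compute the diagonal blocks.

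Then I would compute each block separately. The $\mathfrak{a}_s$ block is literally the real Hessian $d^2_a u$, since $\phi\circ\exp|_{\mathfrak{a}_s}=u$. For a root $\alpha\in\Phi_{Q^u}$, using $\mathrm{Ad}(\exp(-a))X_\alpha = e^{-\alpha(a)}X_\alpha$ to transport $X_\alpha$ from $\exp(a)$ back to the identity, and using the isotropy character of the (well-chosen) $\xi$ together with the definition of $\phi$ to identify $H_\alpha\phi$ with a multiple of $\alpha(2\chi-d_au)$, one obtains the block entry $e^{-2\alpha(a)}\cdot\alpha(2\chi-d_au)$. For $\beta\in\Phi_s^+$, the involution $\sigma$ pairs $\beta$ with $-\sigma(\beta)$ and acts as $-\mathrm{id}$ on $\mathfrak{a}_s$, so the two exponential contributions $e^{-2\beta(a)}$ and $e^{2\beta(a)}$ combine into $\sinh(-2\beta(a))$, again multiplied by $\beta(2\chi-d_au)$.

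Assembling the diagonal entries gives
\[
\det(d^2_au)\cdot\prod_{\alpha\in\Phi_{Q^u}}e^{-2\alpha(a)}\cdot\prod_{\beta\in\Phi_s^+}\sinh(-2\beta(a))\cdot\prod_{\alpha\in\Phi_{Q^u}\cup\Phi_s^+}\alpha(2\chi-d_au),
\]
and the first two root products are exactly $I(a)$, which I would move to the left-hand side, while the last product is $P_{DH}(2\chi-d_au)$ by definition. The main obstacle is the symmetric block: one must identify the correct complex basis of the $\sigma$-anti-invariant part of $\mathfrak{l}$, carry the conjugation through so that the exponentials from $\exp(a)$ and $\exp(-a)=\exp(\sigma(a))$ combine into $\sinh$ rather than bare exponentials, and keep sign conventions consistent through the involution. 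A secondary subtlety is selecting the canonical $\xi\in (K_X^{-1})_x$ so that $i^{n^2}(e^a\cdot\xi)^{-1}\wedge\overline{(e^a\cdot\xi)^{-1}}$ is genuinely dual to the chosen basis and thus pins down the overall multiplicative constant in the formula.
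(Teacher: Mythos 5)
You should first note that the paper does not actually prove this statement: Theorem~\ref{thm:MAHoro} is quoted verbatim from \cite{DelHoro}, and the only proof content the present paper adds is the one-line remark that the result, proved there for rational classes, extends to arbitrary Kähler forms via the authors' definitions of isotropy character and toric potential. Measured against the proof in the cited reference, your outline follows the same strategy: pull back to $G$ via the quasipotential, choose a frame adapted to the decomposition $\mathfrak{a}_s\oplus\bigoplus_{\alpha\in\Phi_{Q^u}}\mathfrak{g}_\alpha\oplus(\text{$\sigma$-anti-invariant part of }\mathfrak{l})$, use $K\times H$-invariance to block-diagonalize the complex Hessian at $\exp(a)$, compute the blocks (the $\mathfrak{a}_s$-block giving $d^2_au$, the root blocks giving linear factors in $2\chi-d_au$ times functions of $\alpha(a)$), and normalize against the frame determined by the canonical linearization of $K_X^{-1}$. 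This is the right architecture and the right source of each factor in the formula.

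Two points deserve flagging. First, a bookkeeping discrepancy: as literally written, your assembled determinant reads $\det(d^2_au)\cdot I(a)\cdot P_{DH}(2\chi-d_au)$ as the density of $\omega^n$ against the reference form, and ``moving $I(a)$ to the left-hand side'' would then yield $I(a)^{-1}(\omega^n)=\det(d^2_au)P_{DH}(2\chi-d_au)\,\mathrm{ref}$, which is the reciprocal of the stated identity $I(a)(\omega^n)=\det(d^2_au)P_{DH}(2\chi-d_au)\,\mathrm{ref}$. The exponential and $\sinh$ factors arise from the Jacobian comparing the frame in which the Hessian entries are clean (transported by $\mathrm{Ad}$ from the identity) with the frame dual to $(e^a\cdot\xi)^{-1}$, and for the identity to come out as stated they must end up dividing the Hessian determinant, not multiplying it; you acknowledge the sign conventions as ``the main obstacle'' but the version you wrote down lands on the wrong side. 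Second, the character $\chi$ in $P_{DH}(2\chi-d_au)$ is the isotropy character of the class $[\omega]$ (entering through the derivative of the quasipotential along $\mathfrak{t}\cap\mathfrak{h}$), not a property of the ``well-chosen $\xi$'' in $(K_X^{-1})_x$, whose role is only to fix the multiplicative normalization. Finally, if one is grading against what the present paper contributes, you have not addressed the extension from rational to real $(1,1)$-classes, which is the one step the authors themselves supply; it is immediate by linearity of the isotropy character and special divisor in the class, but it should be said.
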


Note that while \cite{DelHoro} proves the result only for rational classes, 
it is immediate to extend the result to arbitrary Kähler forms using our 
definition of isotropy character and toric potential. 

\subsection{Translating the equations}

We have now gathered enough from \cite{DelHoro} to translate the system of equations in terms of the toric potentials. We place ourselves in the setting of Section~\ref{sec:intro_horo}.

In particular, each vector field $V_i$ commutes with the action of $G$, and as such is identified with an element of $\mathfrak{Y}(T_s\cap N_G(H))\otimes \bbR$. 
We denote by $\chi_i$ the isotropy character of $\theta_i$. 
Let $w$ denote a fixed toric potential for $\gamma$.
Let $u_i$ denote a toric potential for $\omega_i$, and assume, without loss of generality, that the function $w+\sum_i u_i$ is the canonical (up to constant) choice of toric potential for a metric on the anticanonical line bundle. 
We further assume that the toric potentials are defined consistently within a fixed class, i.e. that the differences 
$ u_i^{\heartsuit}-u_i^{\diamondsuit} $
are bounded functions. 
In particular, the function on $\mathfrak{a}_s$ corresponding to the difference $\phi_i^t$ is $u_i^t-u_i^{\rf}$.

Let $C_i$ be the constants defined by 
$ C_i=\int_{-(2\Delta_i^{\tor}\cap \bar{C}^+)} p(V_i)P_{DH}(2\chi_i-p)dp.$
Let $G_i$ denote the function defined by 
$ G_i(p)= Ce^{h_i(-p(V_i)+C_i)}P_{DH}(2\chi_i-p)$,
where $C$ is the constant which ensures $\int_{-(2\Delta_{\omega_i}^{\tor}\cap \bar{C}^+)}G_i(p)dp=1$. The constant $C$ is independent of $i$ by assumption on the functions $h_i$.
We set  $J(a):=e^{-w(a)}I(a)$.
Finally, fix the normalization in the toric potentials of the form $\sum_i u_i^{\heartsuit}$ by the condition 
\[ \int_{\mathfrak{a}_s^-}e^{-\sum_i u_i^{\heartsuit}(a)}J(a)da = 1. \]

Using the results on horosymmetric manifolds recalled earlier in the section and these notations, we obtain the precise relation between the system of complex Monge-Ampère equations \eqref{eq:ComplexContPath} and the system of real Monge-Ampère equations \eqref{eq:RealContPath}. 

\begin{thm}
\label{thm:translation_horo}
Assume the functions $\phi_i^t$ are solutions to Equation~\eqref{eq:ComplexContPath}, then the functions $u_i^t$ are solutions, on $\mathrm{Int}(\lalg{a}_s^-)$, to the system of equations 
\[  G_i(d_au_i^t)\det(d^2_au_i^t)
= e^{-\sum_{l=1}^{k}(tu_l^t+(1-t)u_l^{\rf})(a)} 
J(a).\]
\end{thm}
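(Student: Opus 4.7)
The plan is to evaluate both sides of Equation~\eqref{eq:ComplexContPath} at points of the form $\exp(a)\cdot x$ with $a$ in the interior of $\lalg{a}_s^-$, translate each factor through the horosymmetric dictionary recalled above, and compare. Every term in \eqref{eq:ComplexContPath} is $K$-invariant by the hypotheses of Section~\ref{sec:intro_horo}, and the $K$-saturation of $\exp(\mathrm{Int}(\lalg{a}_s^-))\cdot x$ is open and dense in $X$, so it suffices to work on this locus.

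To rewrite the left-hand side I would first invoke Proposition~\ref{prop:HamiltonianHoro}: at $\exp(a)\cdot x$ the Hamiltonian $f_i+V_i(\fun{i}{t})$ equals $-d_a\fun{i}{t}(V_i)$ up to an additive constant pinned down by $\int_X f_i\,\theta_i^n=0$. Theorem~\ref{thm:MAHoro} then expresses $\omega_i^n=(\theta_i+\iddbar\fun{i}{t})^n$ at such a point as
\[ \omega_i^n\bigl|_{\exp(a)\cdot x}=\frac{\MAR{\fun{i}{t}}\,P_{DH}(2\chi_i-d_a\fun{i}{t})}{I(a)}\,\mathrm{vol}_a, \]
where $\mathrm{vol}_a:=i^{n^2}(e^a\cdot\xi)^{-1}\wedge\overline{(e^a\cdot\xi)^{-1}}$ is the canonical reference volume form from Section~\ref{sec:anticanonical}.

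For the right-hand side I would use the correspondence between Hermitian metrics on $K_X^{-1}$ and volume forms. The volume form $\theta_0^n$ corresponds to the metric whose curvature is $\Ric(\theta_0)=\gamma+\sum_i\theta_i$, whose toric potential is therefore $w+\sum_l\fun{l}{\rf}$, modulo an additive constant fixed by the normalization $\int_{\lalg{a}_s^-}e^{-\sum_l\fun{l}{\rf}}\,J\,da=1$. Since $\phi_m(\exp(a)\cdot x)=(\fun{m}{t}-\fun{m}{\rf})(a)$ by the consistency assumption on the toric potentials, this yields
\[ e^{-t\sum_m\phi_m}\theta_0^n\bigl|_{\exp(a)\cdot x}=e^{-\sum_l(t\fun{l}{t}+(1-t)\fun{l}{\rf})(a)}\,e^{-w(a)}\,\mathrm{vol}_a. \]
Plugging both computations into \eqref{eq:ComplexContPath} and cancelling $\mathrm{vol}_a$ produces, after recognizing $J=e^{-w}I$ and unfolding the definition of $G_i$, the claimed real Monge-Ampère system on $\mathrm{Int}(\lalg{a}_s^-)$.

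The main obstacle is the bookkeeping of constants. One must verify that the additive constant furnished by Proposition~\ref{prop:HamiltonianHoro} matches the constant $C_i$ built into $G_i$, and that the multiplicative normalization of $G_i$ (total mass one on $-(2\Delta_{\theta_i}^{\tor}\cap\bar C^+)$) is equivalent to $\int_X e^{h_i\circ f_i}\theta_i^n=1$. Both reductions follow by pushing $\omega_i^n$ forward to the moment polytope through the Duistermaat-Heckman identity contained in Theorem~\ref{thm:MAHoro}, which turns integrals of $K$-invariant functions on $X$ into integrals against $P_{DH}$ on the polytope; after this change of variables the comparison of constants is direct, and no further geometric input is required.
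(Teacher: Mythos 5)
Your proposal is correct and follows essentially the same route as the paper's proof: apply Theorem~\ref{thm:MAHoro} to the left-hand side, Proposition~\ref{prop:HamiltonianHoro} to identify $f_i+V_i(\phi_i^t)$ with $-d_au_i^t(V_i)+C_i$, and the anticanonical metric/volume-form correspondence together with the choice of $w+\sum_l u_l^{\rf}$ as toric potential for $\theta_0$ on the right-hand side. Your remarks on matching the additive constant $C_i$ and the unit-mass normalization of $G_i$ via the Duistermaat--Heckman change of variables are exactly how the paper sets up the definitions of $C_i$ and $G_i$ preceding the statement.
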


\begin{proof}
Let $\xi$ be the element of $(K_X^{-1})_x$ produced by Theorem~\ref{thm:MAHoro}, then by the same result we have for any $i$, 
\[ 
I(a)((\omega_i^t)^n)_{e^a\cdot x}= 
\det(d^2_au_i^t) P_{DH}(2\chi_i-d_au_i^t)i^{n^2}((e^a\cdot\xi)^{-1}\wedge \overline{(e^a\cdot\xi)^{-1}}).
\]
By Proposition~\ref{prop:HamiltonianHoro}, and the choice of normalization of Hamiltonian
we have 
$f_{V_i,\omega_i^t}(e^a\cdot x)=-d_au_i^t(V_i)+C_i$ 
for the constant $C_i$ independent of $t$ defined before the statement of the theorem. 

We now turn to the right hand side of Equation~\ref{eq:ComplexContPath}.
By definition of $\theta_0$, by the assumption on $w+\sum_i u_i^{\heartsuit}$, and by the correspondences recalled in Section~\ref{sec:anticanonical},  
\[ (\theta_0^n)_{e^a\cdot x} = 
e^{-w(a)+\sum_j u_j^{\rf}(a)}i^{n^2}((e^a\cdot\xi)^{-1}\wedge \overline{(e^a\cdot\xi)^{-1}}). \]
By definition of $\phi_i^t$, we then have
\[ (e^{-t\sum \phi_j}\theta_0^n)_{e^a\cdot x} = 
e^{-w(a)}e^{t\sum_j u_j^t(a)+(1-t)\sum_j u_j^{\rf}(a)}i^{n^2}((e^a\cdot\xi)^{-1}\wedge \overline{(e^a\cdot\xi)^{-1}}) \]
and it concludes the proof.
\end{proof}

We are now able to carry out the proof of Theorem~\ref{thm:cpld_can_horo}.

\begin{proof}[Proof of Theorem~\ref{thm:cpld_can_horo}]
It follows from the successive application of Theorem~\ref{thm:ReductionToC0},  of Theorem~\ref{thm:translation_horo}, and of Theorem~\ref{thm:C0Estimates}, then a few manipulations on the conditions. 
We thus only need to check that the data provided by Theorem~\ref{thm:translation_horo} verify the assumptions of Theorem~\ref{thm:C0Estimates}.

For this we identify the remaining data: we are working on the vector space $\lalg{a}=\lalg{a}_s$, the cone is $\lalg{a}^+=\lalg{a}_s^-$, and the polytopes $\Delta_i$ are the  $\Delta_i=-2(\Delta^{\tor}_{\theta_i}\cap \bar{C}^+)$. 
The functions $g_k$ is a product of a polynomial with the exponential of an affine function hence satisfy Assumption~\ref{Jassumption}~(\ref{gass_integrable}). 
For the function $J$, note that 
\[ j_{\infty}=v_{-2(\Delta_{\gamma}^{\tor}\cap \bar{C}^+)}+2\sum_{\alpha\in \Phi_{Q^u}\cup \Phi_s^+}\alpha. \]
It is thus continuous and finite on $\lalg{a}^+$, $j-j_{\infty}$ is bounded from below on the whole cone $\lalg{a}^+$ since $\log\sinh(x)\leq x$. Finally, Assumption~\ref{Jassumption}~(\ref{Jass_proper}) is satisfied since we always have, by the description of the anticanonical divisor,  $2\sum_{\alpha\in \Phi_{Q^u}\cup \Phi_s^+}\alpha\circ\mathcal{P} \in \mathrm{Int}(2\Delta_{ac}^{\tor})=\mathrm{Int}(2\Delta_{\gamma}^{\tor}+2\sum_i\Delta_{\theta_i}^{\tor})$.
\end{proof}

\section{Examples}
\label{sec:examples}

We will illustrate in this section some of the consequences of our main result, but this also serves us as a pretext to present the natural horosymmetric structure on several Fano threefolds. 
This is even easier now that the connected automorphism groups of Fano threefolds have been completely determined in \cite{CPS18}. 

Recall that there are five homogeneous Fano threefolds: the quadric $Q$, products of projective spaces $\bbP^3$, $\bbP^2\times \bbP^1$, $(\bbP^1)^3$ and the full flag threefold, usually denoted by $W$.
It turns out from examination of the remaining threefolds with infinite connected automorphism groups that the non-toric, possibly spherical threefold, necessarily have a connected \emph{reductive} automorphism group isogenous to $\SL_2\times \bbC^*$. 
Any horosymmetric threefold under the action of $\SL_2\times \bbC^*$ is either symmetric or rank two horospherical. 
It is not hard to check that any horospherical \emph{threefold} has to be toric.

On the other hand, toric manifolds in general are horospherical under the action of a maximal connected \emph{reductive} automorphism group, and considering this more precise horospherical structure reduces the rank of the action unless the latter group is a torus. 
For toric Fano threefolds which are not homogeneous, the possible connected reductive automorphism groups are in the list: $\SL_3\times \bbC^*$, $\SL_2^2\times \bbC^*$, $\SL_2\times (\bbC^*)^2$. 

In this section we will determine the symmetric Fano threefolds, as well as the best horospherical structures on toric Fano threefolds. 
For the latter, we will focus on the Fano threefolds equipped with a rank one horospherical structure, and only mention that all the others either appear in the list of rank $2$ smooth and Fano embeddings of $\SL_2/U\times \bbC^*$ in Pasquier's thesis \cite[Chapitre~7]{Pas06},
or are products of $\bbP^1$ with a toric Del Pezzo surface. 
We will then study some rank one Fano $\SL_2\times \SL_3$-horospherical \emph{fourfolds} including the toric example of Fano manifold with no Kähler-Einstein metric but a pair of coupled Kähler-Einstein metrics obtained by the second author in \cite{Hul}. 

In this section, we will often identify Fano threefolds by their identifier as used for example in \cite{CPS18}. 
We will use for this the notation $F^3_{I}$ for the threefold with identifier $I$. 
For example, $F^3_{2.33}$ denotes the blowup of the projective space $\bbP^3$ along a line. 
Note that the number before the period in the identifier is the Picard rank of the threefold. 

In this section, the group $G$ will be a product of special linear groups and of one-dimensional tori. 
We fix as maximal torus $T$ the subgroup of diagonal matrices, and as Borel subgroup $B$ the subgroup of upper-diagonal matrices. 
The positive root system $\Phi^+\subset \mathfrak{X}(T)$ will be the one associated to  these choices. 

A horospherical homogeneous space $G/H$ is fully determined by the data of the parabolic $P=N_G(H)$ and of the spherical lattice $\mathcal{M}\subset \mathfrak{X}(P)$. 
Furthermore, Pasquier obtained very explicit criterions to determine if a horospherical embedding is Fano and smooth. 
These allow to determine the horospherical structures on smooth Fano threefolds easily.
For symmetric threefolds, we will use the classification of Fano symmetric varieties with low rank obtained by Ruzzi \cite{Ruz12}.

Finally, we will work in this section rather with moment polytope than toric polytopes, and refer to Section~\ref{sec:corresp_pol} for the correspondence with the toric polytope which allows to apply the results as stated in Section~\ref{sec:intro_horo}.

\subsection{Warm up: Hirzebruch surfaces and their blow-down}

Before considering threefolds, let us first consider the non-homogeneous horospherical surfaces. 
The only possibility for the group action is $G=\SL_2$. 
It is of rank one, type $A_1$, we use the notations $\Phi^+=\{\alpha\}$, $\mathfrak{X}(T)=\bbZ \frac{\alpha}{2}$ and have (up to a constant)  $P_{DH}(x\frac{\alpha}{2})=x$. 
The normalizer $P$ of $H$ has to be the opposite Borel subgroup (the lower triangular matrices), but there remains a choice of the generator $\mu$ of the lattice $\mathcal{M}\subset \mathfrak{X}(P)=\mathfrak{X}(T)$. 
We use the notation $\mu=k\alpha/2$ for some positive integer $k$. 

There are always two choices of $G$-equivariant embeddings of $G/H$ in this situation, for each $k$: one toroidal embedding $\mathbb{F}_k$ and one colored embedding $\bbP(1,1,k)$. 
The toroidal ones are none other than the Hirzebruch surfaces. 
In particular, $\mathbb{F}_1$ is the blow up of $\bbP^2$ at one point, and for higher $k$, $\mathbb{F}_k$ is not Fano. 
On the other hand the colored are, as the notation suggests, weighted projective planes. They are Fano, but singular as soon as $k\geq 2$. 
The anticanonical moment polytope of $\bbP(1,1,k)$ is 
$\Delta=[0,(1+k/2)\alpha]\subset \mathfrak{X}(T)\otimes \bbR.$ 

Set $M:=1+k/2$, $V(a,b)=\int_0^Mt(at+b) dt$, and $X(a,b)=\int_0^M t^2(at+b)dt$ for $a,b\in \bbR$. 
The necessary condition for existence of Mabuchi metrics is satisfied 
on these weighted projective planes if for the unique pair $(a,b)$ such that 
$X(a,b)=V(a,b)=V(0,1)$,  
the affine function $t\mapsto at+b$ is positive on $[0,M]$. 

A straightforward computation using $V(a,b)=aM^3/3+bM^2/2$, $X(a,b)=aM^4/4+bM^3/3$
shows that $b=3(3M-4)/M$, $a=6(3-2M)/M^2$. 
We recover that $a=0$ in the case $k=1$ which corresponds to $\bbP^2$, 
and in this situation $b=1>0$ so that unsurprisingly there exists a Kähler-Einstein metric.
In the cases $k>1$, one has $0<-b/a<M$, 
so that the corresponding affine function is not positive on the polytope, that is there cannot be any reasonable singular Mabuchi metric on $\bbP(1,1,k)$.  

\subsection{Fano rank one $\SL_2\times \SL_2$-horospherical threefolds}

The group $\SL_2\times \SL_2$ is of rank two, type $A_1\times A_1$. We have $\Phi^+=\{\alpha_1,\alpha_2\}$ and  $\mathfrak{X}(T)=\bbZ \frac{\alpha_1}{2}+\bbZ \frac{\alpha_2}{2}$.
The only generalized flag manifold for the action of $\SL_2\times \SL_2$ is $\bbP^1\times \bbP^1$. 
The parabolic $P$ is the Borel subgroup consisting of pairs of lower triangular matrices (opposite to $B$), thus $\mathfrak{X}(P)=\mathfrak{X}(T)$. 
For any horospherical subgroup $H$ with normalizer equal to $P$, we have have the same Duistermaat-Heckman, and its expression is up to a multiplicative constant $P_{DH}(x\frac{\alpha_1}{2}+y \frac{\alpha_2}{2})=xy$.
From the work of Pasquier \cite{Pas06,Pas08}, 
we deduce the list of choices of $\mathcal{M}$ that allow smooth and Fano embeddings of $G/H$ and list these. 
Up to obvious symmetries, we have the following possibilities. 

The first possibility is the product case, if  $\mathcal{M}=\bbZ \alpha_1/2$. 
In this case, $G/H$ is the product $\bbC^2\setminus \{0\}\times \bbP^1$ and the smooth and Fano embeddings are the products $F^3_{3.28}=\mathbb{F}_1\times \bbP^1$ and $F^3_{2.34}=\bbP^2\times \bbP^1$. 
Their moment polytopes are $\{t\alpha_1/2+\alpha_2; t\in [t_-,3]\}$  where $t_-=0$ for $F^3_{2.34}$ and $t_-=1$ for $F^3_{3.28}$.

The second possibility is $\mathcal{M}=\bbZ \frac{\alpha_1+\alpha_2}{2}$. 
The corresponding horospherical homogeneous space admits a unique smooth and Fano embedding: the Fano threefold $F^3_{3.31}$ which is the $\SL_2\times \SL_2$-equivariant $\bbP^1$-bundle $\bbP(\mathcal{O}\oplus\mathcal{O}(1,1))$ on $\bbP^1\times \bbP^1$.
The moment polytope is $\{t(\alpha_1+\alpha_2)/2;t\in [1,3]\}$.

The last possibility for $\mathcal{M}$ is $\bbZ\frac{\alpha_1-\alpha_2}{2}$. There are then three (up to symmetry) smooth and Fano embeddings: the fully colored one, which is the projective space $\bbP^3$, the single colored $F^3_{2.33}$ and the toroidal $F^3_{3.25}$. 
The geometrical description of the action is straighforward: 
consider $\bbC^4$ as the sum $\bbC^2\oplus \bbC^2$ and the corresponding componentwise action 
of $\SL_2\times \SL_2$. 
The induced action on $\bbP^3$ is the rank one horospherical structure, with two colored closed orbits which are two disjoint lines: the natural inclusions of the projectivization of both $\bbC^2$ summands. 
Then $F^3_{2.33}$ is the blow-up of $\bbP^3$ along one of these 
lines, while $F^3_{3.25}$ is the blow up along both lines. 
The moment polytopes are $\{(4-t)\alpha_1/2+t\alpha_2/2; t\in [t_-,t_+]\}$ where $(t_-,t_+)$ is $(0,4)$ for $\bbP^3$, $(0,3)$ for $F^3_{2.33}$, and $(1,3)$ for $F^3_{3.25}$.

Let us now see how the conditions for existence of Kähler-Einstein metrics or Mabuchi metrics on these manifolds reduce to simple computations in one variable in this setting. Note that the results here are not new (see \cite{NSY17}).
The manifold $F^3_{3.25}$ is known to be Kähler-Einstein. 
This follows from the obvious symmetry of its moment polytope and of $P_{DH}$ under the exchange of $\alpha_1$ and $\alpha_2$.

For Mabuchi metrics on $F^3_{3.31}$, we consider an arbitrary affine function on the polytope, which we may write $t\mapsto at+b$ under the parametrization used before. 
Then the barycenter involved in the search for Mabuchi metric is 
\[ \mathbf{bar}(a,b):=\frac{\int_1^3 (at+b)t^3dt}{\int_1^3 (at+b)t^2dt}\frac{\alpha_1+\alpha_2}{2}=\frac{363a+150b}{300a+130b}(\alpha_1+\alpha_2) \]
It follows first that it is different from $\alpha_1+\alpha_2$ if $a=0$ \emph{i.e.} there are no Kähler-Einstein metrics. 
More precisely, the barycenter is $\alpha_1+\alpha_2$ if and only if $63a+20b=0$, and under this condition the affine function $t\mapsto at+b$ is positive on $[1,3]$, so there are Mabuchi metrics on $F^3_{3.31}$.

Similarly, for $F^3_{2.33}$ define a barycenter depending on a choice of affine function $t\mapsto at+b$ by 
\[ \mathbf{bar}(a,b):=2\alpha_1+\frac{\int_0^3 (at+b)t^2(4-t)dt}{\int_0^3 (at+b)t(4-t)dt}\frac{\alpha_2-\alpha_1}{2} = 2\alpha_1+\frac{72a+35b}{10(7a+4b)}(\alpha_2-\alpha_1) \]
The barycenter coincides with $\alpha_1+\alpha_2$ if and only if $2a-5b=0$, and under this condition the affine function $t\mapsto at+b$ is positive on $[0,3]$, so there are Mabuchi metrics on $F^3_{2.33}$ as well.

\subsection{Fano rank one $\SL_3$-horospherical threefolds}

The group $\SL_3$ is of rank two, type $A_2$. 
We have $\Phi^+=\{\alpha_1,\alpha_2,\alpha_1+\alpha_2\}$ and $\mathfrak{X}(T)=\bbZ \frac{\alpha_1+2\alpha_2}{3}+\bbZ \frac{2\alpha_1+\alpha_2}{3}.$

The only flag manifold of dimension $2$ under $\SL_3$ is the projective space $\bbP^2$. 
We may thus assume that $P$ is conjugate to the stabilizer of a plane in $\bbC^3$. 
We assume that $-\alpha_1$ is a root of $P^u$. 
The characters of $P$ are those characters in the weight lattice of $\SL_3$ which are orthogonal to $\alpha_2$ with respect to the Killing form. 
In other words, this is the one dimensional $\bbZ(2\alpha_1+\alpha_2)/3$. 
The Duistermaat-Heckman polynomial in this section will be $P_{DH}(x\frac{\alpha_1+2\alpha_2}{3}+y \frac{2\alpha_1+\alpha_2}{3})=xy(x+y) $.
Using again the work of Pasquier, we deduce the possible horospherical homogeneous spaces with smooth and Fano embeddings, as well as these embeddings. 

The first case is $\mathcal{M}=\bbZ (2\alpha_1+\alpha_2)/3$. 
There are two smooth and Fano embeddings of the corresponding horospherical homogeneous space. 
The colored one is isomorphic to $\bbP^3$, with moment polytope $\{t(2\alpha_1+\alpha_2)/3; t\in[0,4]\}$.
The toroidal one is the $\bbP^1$-bundle $F^3_{2.35}=\bbP(\mathcal{O}\oplus\mathcal{O}(1))$ over $\bbP^2$, with moment polytope $\{t(2\alpha_1+\alpha_2)/3; t\in[2,4]\}$.

The second case is $\mathcal{M}=\bbZ (4\alpha_1+2\alpha_2)/3$. 
In this case, the toroidal embedding $F^3_{2.36}=\bbP(\mathcal{O}\oplus\mathcal{O}(2))$ over $\bbP^2$, with moment polytope $\{t(2\alpha_1+\alpha_2)/3; t\in[1,5]\}$ is the only smooth and Fano embedding. 
It is easy to check that $F^3_{2.36}$ does not admit any Mabuchi metric. We consider for this the barycenter, depending on the variables $a$ and $b$ defining the affine function $t\mapsto at+b$, defined by 
\[ \mathbf{bar}(a,b):=\frac{\int_1^5(at+b)t^3dt}{\int_1^5(at+b)t^2dt}\frac{2\alpha_1+\alpha_2}{3}=\frac{781a+195b}{585a+155b}(2\alpha_1+\alpha_2) \]
The above barycenter is equal to $2\alpha_1+\alpha_2$ if and only if $196a+40b=0$, but under this condition, the affine function $t\mapsto at+b$ is not positive on the segment $[1,5]$. 

However if we consider the variant of Mabuchi metrics given by considering powers of affine functions, we get the existence of a canonical metric of this type on $F^3_{2.36}$ as soon as we consider the squares of affine functions. 
To see this, consider 
\[ \mathbf{bar}^{(2)}(a,b):=\frac{\int_1^5(at+b)^2t^3dt}{\int_1^5(at+b)^2t^2dt}\frac{2\alpha_1+\alpha_2}{3}=\frac{3255a^2+1562ab+195b^2}{2343a^2+1170ab+155b^2}(2\alpha_1+\alpha_2) \]
The above barycenter is equal to $2\alpha_1+\alpha_2$ if and only if the second order equation $912a^2+398ab+40b^2=0$ is satisfied. 
Choosing (without loss of generality) $b=912$, there are two solutions $a_{\pm}=-199\pm\sqrt{3121}$. 
The affine function $a_-t+b$ changes sign on $[1,5]$, but not $a_+t+b$ which remains positive. 
 
\subsection{Symmetric Fano threefolds under the action of $\SL_2\times \bbC^*$}

The group $G=\SL_2\times \bbC^*$ is of rank two, type $A_1$. 
We have $\Phi^+=\{\alpha\}$ and $\mathfrak{X}(T)=\bbZ \frac{\alpha}{2}\oplus \bbZ f$ (where $f$ is a generator of $\mathfrak{X}(T/[G,G])$).

There is only one possible rank two involution on $G$ (up to conjugation): 
$\sigma(g,s)=((g^t)^{-1},s^{-1})$ for $(g,s)\in \SL_2\times \bbC^*$.
Up to quotienting the $\bbC^*$ factor so that it acts effectively, 
there are three possible symmetric subspaces: 
$H_A:=\SO_2\times \{1\}$, $H_B:=N(\SO_2)\times \{1\}$, and the intermediate 
case $H_C:=\{(g,g\sigma(g)^{-1}/I_2),g\in N(\SO_2)\}$.
For the last subgroup, $I_2$ denotes the identity matrix in $\SL_2$, 
and the quotient is well defined since for any $g\in N(\SO_2)$, 
$g\sigma(g)^{-1}= \pm I_2 \in Z(\SL_2)$.
In any case, the Duistermaat-Heckman polynomial is given (as always up to a multiplicative constant) by  $P_{DH}(x\frac{\alpha}{2}+yf)=x$.

From Ruzzi \cite{Ruz12} we know the smooth and Fano $G$-equivariant embeddings of these symmetric spaces. 
This provides 12 Fano threefolds. We draw in Figures~\ref{fig:RA}, \ref{fig:RB} and \ref{fig:RC} the moment polytopes, and include the data of the spherical lattice $\mathcal{M}$ as the dotted grid, while the dashed line represents the single wall of the positive restricted Weyl chamber in $\mathcal{M}\otimes \bbR$.
We denote by RA1, RA2, RA3 the Fano embeddings of $G/H_A$, by RB1, RB2, RB3 the Fano embeddings of $G/H_B$, and by RC1 to RC6 the Fano embeddings of $G/H_C$ as in the figures.
\begin{figure}
\begin{tikzpicture}
\draw (-.5,-.5) node{RA1}; 
\draw [dotted] (-1,-2) grid[xstep=1,ystep=1] (3,2);

\draw (0,0) node{$\bullet$};
\draw [very thick, ->] (0,0) -- (1,0) node[above right]{$\alpha$};

\draw [dashed] (0,-2) -- (0,2);

\draw [very thick] (0,-1) -- (2,-1) -- (2,1) -- (0,1) -- cycle;
\end{tikzpicture}
\begin{tikzpicture}
\draw (-.5,-.5) node{RA2};
\draw [dotted] (-1,-2) grid[xstep=1,ystep=1] (3,2);

\draw (0,0) node{$\bullet$};
\draw [very thick, ->] (0,0) -- (1,0) node[above right]{$\alpha$};

\draw [dashed] (0,-2) -- (0,2);

\draw [very thick] (0,-1) -- (3,-1) -- (1,1) -- (0,1) -- cycle;
\end{tikzpicture}
\begin{tikzpicture}
\draw (-.5,-.5) node{RA3};
\draw [dotted] (-1,-2) grid[xstep=1,ystep=1] (3,2);

\draw (0,0) node{$\bullet$};
\draw [very thick, ->] (0,0) -- (1,0) node[above right]{$\alpha$};

\draw [dashed] (0,-2) -- (0,2);

\draw [very thick] (0,-1) -- (2,-1) -- (2,0) -- (1,1) -- (0,1) -- cycle;
\end{tikzpicture}
\caption{Moment polytopes for smooth and Fano embeddings of $G/H_A$ \label{fig:RA}}
\end{figure}
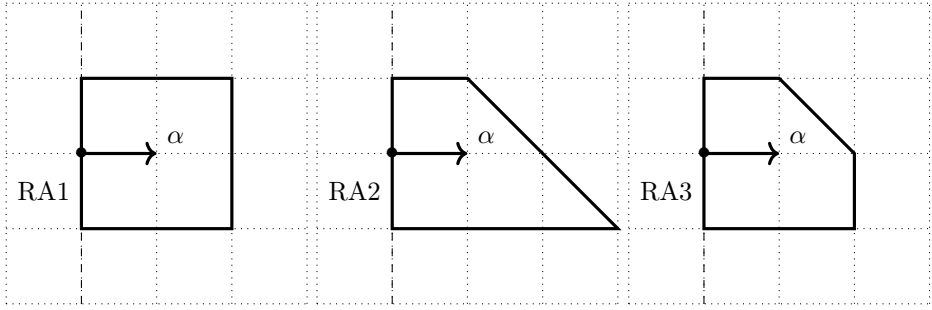

\begin{figure}
\begin{tikzpicture}
\newcommand*\size{1}
\draw (-.5,-.5) node{RB1};
\draw [dotted] (-1*\size,-2*\size) grid[xstep=\size,ystep=\size] (3*\size,2*\size);

\draw (0,0) node{$\bullet$};
\draw [very thick, ->] (0,0) -- (\size/2,0) node[above right]{$\alpha$};

\draw [dashed] (0,-2*\size) -- (0,2*\size);

\draw [very thick] (0,-\size) -- (3*\size/2,-\size) -- (3*\size/2,\size) -- (0,\size) -- cycle;
\end{tikzpicture}
\begin{tikzpicture}
\newcommand*\size{1}
\draw (-.5,-.5) node{RB2};
\draw [dotted] (-1*\size,-2*\size) grid[xstep=\size,ystep=\size] (3*\size,2*\size);

\draw (0,0) node{$\bullet$};
\draw [very thick, ->] (0,0) -- (\size/2,0) node[above right]{$\alpha$};

\draw [dashed] (0,-2*\size) -- (0,2*\size);

\draw [very thick] (0,-\size) -- (5*\size/2,-\size) -- (\size/2,\size) -- (0,\size) -- cycle;
\end{tikzpicture}
\begin{tikzpicture}
\newcommand*\size{1}
\draw (-.5,-.5) node{RB3};
\draw [dotted] (-1*\size,-2*\size) grid[xstep=\size,ystep=\size] (3*\size,2*\size);

\draw (0,0) node{$\bullet$};
\draw [very thick, ->] (0,0) -- (\size/2,0) node[above right]{$\alpha$};

\draw [dashed] (0,-2*\size) -- (0,2*\size);

\draw [very thick] (0,-\size) -- (3*\size/2,-\size) -- (3*\size/2,0) -- (\size/2,\size) -- (0,\size) -- cycle;
\end{tikzpicture}
\caption{Moment polytopes for smooth and Fano embeddings of $G/H_B$ \label{fig:RB}}
\end{figure}

\begin{figure}
\begin{tikzpicture}
\newcommand*\size{1}
\draw (-.5,.5) node{RC1};
\draw [dotted] (-2*\size,-3*\size) grid[xstep=\size,ystep=\size] (2*\size,2*\size);

\draw (0,0) node{$\bullet$};
\draw [very thick, ->] (0,0) -- (\size/2,-\size/2) node[above right]{$\alpha$};

\draw [dashed] (-2*\size,-2*\size) -- (2*\size,2*\size);

\draw [very thick] (-3*\size/2,-3*\size/2) -- (3*\size/2,-3*\size/2) -- (3*\size/2,3*\size/2) -- cycle;
\end{tikzpicture}
\begin{tikzpicture}
\newcommand*\size{1}
\draw (-.5,.5) node{RC2};
\draw [dotted] (-2*\size,-3*\size) grid[xstep=\size,ystep=\size] (2*\size,2*\size);

\draw (0,0) node{$\bullet$};
\draw [very thick, ->] (0,0) -- (\size/2,-\size/2) node[above right]{$\alpha$};

\draw [dashed] (-2*\size,-2*\size) -- (2*\size,2*\size);

\draw [very thick] (-\size/2,-\size/2) -- (3*\size/2,-5*\size/2) -- (3*\size/2,3*\size/2) -- cycle;
\end{tikzpicture}
\begin{tikzpicture}
\newcommand*\size{1}
\draw (-.5,.5) node{RC3};
\draw [dotted] (-2*\size,-3*\size) grid[xstep=\size,ystep=\size] (2*\size,2*\size);

\draw (0,0) node{$\bullet$};
\draw [very thick, ->] (0,0) -- (\size/2,-\size/2) node[above right]{$\alpha$};

\draw [dashed] (-2*\size,-2*\size) -- (2*\size,2*\size);

\draw [very thick] (-\size/2,-\size/2) -- (3*\size/2,-5*\size/2) -- (3*\size/2,-\size/2) -- (\size/2,\size/2) -- cycle;
\end{tikzpicture}

\begin{tikzpicture}
\newcommand*\size{1}
\draw (-.5,.5) node{RC4};
\draw [dotted] (-2*\size,-2*\size) grid[xstep=\size,ystep=\size] (2*\size,2*\size);

\draw (0,0) node{$\bullet$};
\draw [very thick, ->] (0,0) -- (\size/2,-\size/2) node[above right]{$\alpha$};

\draw [dashed] (-2*\size,-2*\size) -- (2*\size,2*\size);

\draw [very thick] (-3*\size/2,-3*\size/2) -- (\size/2,-3*\size/2) -- (3*\size/2,-\size/2) -- (3*\size/2,3*\size/2) -- cycle;
\end{tikzpicture}
\begin{tikzpicture}
\newcommand*\size{1}
\draw (-.5,.5) node{RC5};
\draw [dotted] (-2*\size,-2*\size) grid[xstep=\size,ystep=\size] (2*\size,2*\size);

\draw (0,0) node{$\bullet$};
\draw [very thick, ->] (0,0) -- (\size/2,-\size/2) node[above right]{$\alpha$};

\draw [dashed] (-2*\size,-2*\size) -- (2*\size,2*\size);

\draw [very thick] (-\size/2,-\size/2) -- (\size/2,-3*\size/2) -- (3*\size/2,-3*\size/2) -- (3*\size/2,3*\size/2) -- cycle;
\end{tikzpicture}
\begin{tikzpicture}
\newcommand*\size{1}
\draw (-.5,.5) node{RC6};
\draw [dotted] (-2*\size,-2*\size) grid[xstep=\size,ystep=\size] (2*\size,2*\size);

\draw (0,0) node{$\bullet$};
\draw [very thick, ->] (0,0) -- (\size/2,-\size/2) node[above right]{$\alpha$};

\draw [dashed] (-2*\size,-2*\size) -- (2*\size,2*\size);

\draw [very thick] (-\size/2,-\size/2) -- (\size/2,-3*\size/2) -- (3*\size/2,-3*\size/2) -- (3*\size/2,-\size/2) -- (\size/2,\size/2) -- cycle;
\end{tikzpicture}
\caption{Moment polytopes for smooth and Fano embeddings of $G/H_C$ \label{fig:RC}}
\end{figure}
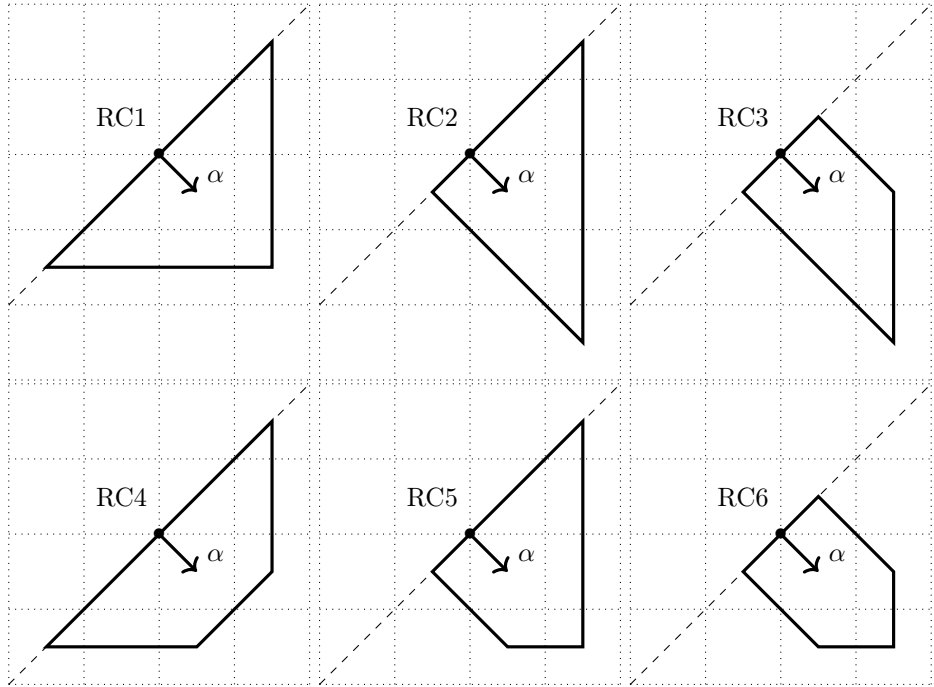

Given their moment polytopes, it is straightforward to compute (using \cite{Bri89}) their Picard rank and anticanonical degree. 
It is often enough to identify the Fano threefolds. 
Namely, this allows to immediately identify RA2, RA3, RB2, RB3, RC1, RC2, and RC3. 
On the other hand, it is clear from the combinatorial data that RA1 and RB1 are the products of $\bbP^1$ with the wonderful compactification of their two dimensional symmetric factor (respectively $\SL_2/\SO_2\subset \bbP^1\times \bbP^1$ and $\SL_2/N_{\SL_2}(\SO_2)\subset \bbP^2$). 
For RC4, it follows from \cite{CPS18} that $F^3_{2.29}$ is the only threefold with appropriate Picard rank, anticanonical degree, and that admits an \emph{almost effective} action of $\SL_2\times \bbC^*$. 
For the last two, let us provide a geometrical description which allows to identify them. 

Consider the standard quadratic form $z_1^2+z_2^2+z_3^2+z_4^2+z_5^2$ on $\bbC^5$. 
Write $\bbC^5=\bbC^3\times \bbC^2$ and note that the above quadratic form is the sum of the standard quadratic forms on both factors. 
Consider the induced action of $P(O_3\times O_2)\simeq \GL_2$ on the quadric in $\bbP^4$. 
It admits five orbits: the closed orbit given by inclusion of the $1$-dimensional quadric from the first factor, the two fixed points given by the two components of the $0$-dimensional quadric from the second factor, the codimension $1$ orbit consisting of elements which are the image of products of non-zero elements of both affine quadrics, and finally the open orbit, isomorphic to $G/H_C$. 
This action coincides with the structure of $G/H_C$-embedding of RC1. 
From the caracterization of equivariant morphisms between $G/H_C$-embeddings \cite[Theorem~4.1]{Kno91}, 
we deduce that RC5 is the blowup of RC1 at one of the fixed point, while RC6 is the blowup of RC1 at both fixed points. Note also that RC4 is the blowup of RC1 along the $1$-dimensional quadric. 

We summarize the correspondence in the following table:\\
\begin{tabular}{cccccccccccc}
\toprule
RA1&RA2&RA3&RB1&RB2&RB3&RC1&RC2&RC3&RC4&RC5&RC6 \\
\midrule
$F^3_{3.27}$ & $F^3_{3.31}$ & $F^3_{4.8}$ & $F^3_{2.34}$ & $F^3_{2.36}$ & $F^3_{3.22}$ & $Q^3$ & $\bbP^3$ & $F^3_{2.35}$ & $F^3_{2.29}$ & $F^3_{2.30}$ & $F^3_{3.19}$ \\
\bottomrule
\end{tabular}

Note in particular that RA3, RB3, RC4, RC5, RC6 are neither homogeneous nor toric, and that $\SL_2\times \bbC^*$ is up to isogeny a maximal connected reductive automorphism group of each. 

To illustrate our results on these examples, let us consider the existence of Mabuchi metrics on those threefolds. 
It is known that RC4 and RC6 admit Kähler-Einstein metrics. For the other three, they are the lowest dimensional examples where our results give new existence or non-existence for Mabuchi metrics. 

For RC5, we write points in the moment polytope as $xf+y\alpha$ where $x$ and $y$ are parameters, $f$ is orthogonal to $\alpha$ and the vertices of the moment polytope are given by their coordinates $(x,y)$ as the elements of the set $\{(-3,0),(0,3),(1,2),(1,0) \}$. 
Searching for a Mabuchi metric amounts to searching for an affine function $x\mapsto ax+b$ (note the dependence only in $x$), positive on $[-3,1]$, such that 
\[ \left(\int_{-3}^0\int_0^{3+x}+\int_0^1\int_0^{3-x}\right) \left((xf+(y-1)\alpha)(ax+b)y\right) dy dx=x_bf+y_b\alpha \]
satisfies $x_b=0$ and $y_b>0$. 
The condition on $x_b$ imposes $49a-20b=0$, and under this condition, the affine function vanishes at $-49/20\in [-3,1]$. 
As a consequence, RC5 does not admit any Mabuchi metric. 

Consider now RA3, we again fix coordinates $(x,y)$ with respect to a basis $(f,\alpha)$ where $f$ is orthogonal to $\alpha$ and such that the coordinates of the vertices of the moment polytope are $(-1,0)$, $(-1,2)$, $(0,2)$, $(1,1)$ and $(1,0)$. 
We search for an affine function $x\mapsto ax+b$, positive on $[-1,1]$, such that 
\[ \left(\int_{-1}^0\int_0^{2}+\int_0^1\int_0^{2-x}\right)\left((xf+(y-1)\alpha)(ax+b)y\right) dy dx=x_bf+y_b\alpha \]
satisfies $x_b=0$ and $y_b>0$. 
The condition $x_b=0$ implies $112a-65b=0$ and the other conditions are then satisfied, so RA3 admits a Mabuchi metric. 

Similar verifications show that RB3 also admits a Mabuchi metric. 

\subsection{Rank one $\SL_2\times \SL_3$-horospherical Fano fourfolds and  Futaki's example}

The group $\SL_2\times \SL_3$ is of rank three and of type $A_1\times A_2$. 
We have $\Phi^+=\{\alpha_1,\alpha_2,\alpha_3,\alpha_2+\alpha_3\}$ and $\mathfrak{X}(T)=\bbZ\varpi_1\oplus \bbZ \varpi_3+\bbZ \varpi_2$ where $\varpi_1=\frac{\alpha_1}{2}$, $\varpi_2=\frac{2\alpha_2+\alpha_3}{3}$ and $\varpi_3=\frac{\alpha_2+2\alpha_3}{3}$.

We are interested in rank one, four dimensional horospherical homogeneous spaces that do not split as products. 
As a consequence, we assume that $P$ is the parabolic with $\Phi_{P^u}=\{-\alpha_1,-\alpha_2\}$, and that $\mathcal{M}=\bbZ e \subset \mathfrak{X}(P)=\bbZ\varpi_1+\bbZ\varpi_2$ where $e=k_1\varpi_1+k_2\varpi_2$ for some non-zero relative integers $k_1$ and $k_2$. 
From Pasquier's work, there are smooth and Fano embeddings  exactly for $(k_1,k_2)\in\{\pm(1,1),\pm(-1,1),\pm(1,2),\pm(-1,2)\}$, including, for each, the toroidal $\bbP^1$-bundle $\bbP(\mathcal{O}\oplus\mathcal{O}(k_1,k_2))$ over $\bbP^1\times \bbP^2$.

Let us concentrate first on the case $(k_1,k_2)=(-1,1)$, whose toroidal embedding corresponds to Hultgren's example \cite{Hul}. 
The toroidal embedding $X=\bbP(\mathcal{O}\oplus\mathcal{O}(1,-1))$ has Picard rank three. 
By the description of the Picard group for horospherical varieties, and the criterion for ampleness, 
we can choose parameters $(s_1,s_2,s_3)$ for $0\leq s_1\leq s_2\leq s_3$ parametrizing semi-positive real line bundles $L(s_1,s_2,s_3)$ (with associated class of $B$-stable divisor represented by $(s_3-s_1)D_+ +(s_2-s_3)D_- + s_3D_{\alpha_1}$). 
The moment polytope of $L(s_1,s_2,s_3)$ is then 
\[ \Delta(s_1,s_2,s_3)=\{s_3\varpi_2+t(\varpi_1-\varpi_2); s_1\leq t \leq s_2 \}. \] 
In particular, the anticanonical moment polytope is $\Delta(1,3,5)$. 

\begin{figure}
\begin{tikzpicture}
\draw [dotted] (-3,-1) grid[xstep=1,ystep=1] (6,6);
\draw (0,0) node{$\bullet$};
\draw (0,0) node[below left]{$0$};
\draw[very thick] (1,4) -- (3,2) node[above right]{$\Delta(s_1,s_2,s_3)$};
\draw[dashed] (0,5) -- (5,0);
\draw[dashed] (1,0) -- (1,4);
\draw[dashed] (3,0) -- (3,2);
\draw (0,5) node{$\bullet$};
\draw (0,5) node[above left]{$s_3\varpi_2$};
\draw (5,0) node{$\bullet$};
\draw (5,0) node[below]{$s_3\varpi_1$};
\draw (1,0) node{$\bullet$};
\draw (1,0) node[below]{$s_1\varpi_1$};
\draw (3,0) node{$\bullet$};
\draw (3,0) node[below]{$s_2\varpi_1$};
\draw (5,5) node[above right]{$\mathfrak{X}(P)$};
\draw (1,-1) -- (-3,3) node[above right]{$\mathcal{M}\otimes \bbR$};
\end{tikzpicture}
\caption{Moment polytopes for $\bbP_{\bbP^1\times \bbP^2}(\mathcal{O}\oplus \mathcal{O}(-1,1))$}
\end{figure}
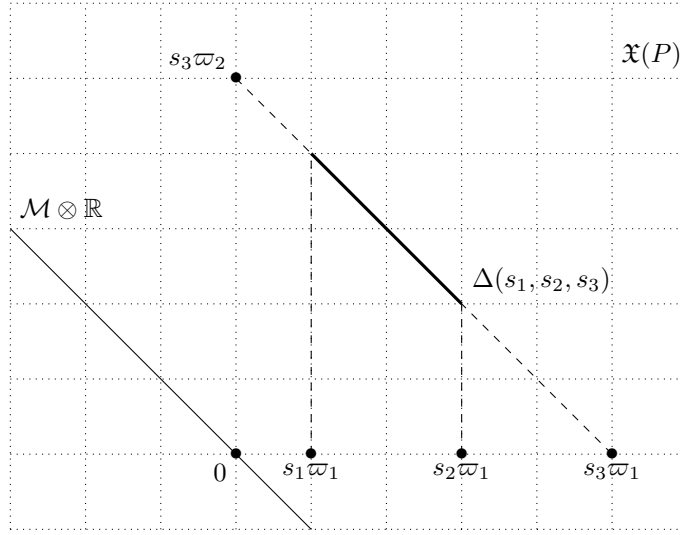

The Duistermaat-Heckamn polynomial on $\mathfrak{X}(P)\otimes\bbR$ is, up to a multiplicative constant, given by 
$P_{DH}(x_1\varpi_1+x_2\varpi_2)=x_1x_2^2$.
The barycenter of $\Delta(s_1,s_2,s_3)$ with respect to the Duistermaat-Heckman polynomial is 
\[ \mathbf{bar}(s_1,s_2,s_3)=s_3\varpi_2+\frac{2}{5}\frac{6(s_2^5-s_1^5)-15s_3(s_2^4-s_1^4)+10s_3^2(s_2^3-s_1^3)}{3(s_2^4-s_1^4)-8s_3(s_2^3-s_1^3)+6s_3^2(s_2^2-s_1^2)}(\varpi_1-\varpi_2) \]
In particular, for the anticanonical line bundle, $\mathbf{bar}(1,3,5)=5\varpi_2+\frac{244}{125}(\varpi_1-\varpi_2)\neq 2\varpi_1+3\varpi_2= 2\rho_H$, so we recover that $X$ is not Kähler-Einstein. 

For a decomposition of the anticanonical line bundle in a sum of two ample real line bundles $L(1,3,5)=L(s_1,s_2,s_3)+L(1-s_1,3-s_2,5-s_3)$ (valid if $0<s_1<1$, $s_1<s_2<2+s_1$ and $s_2<s_3<2+s_2$)
we see that the equation $\mathbf{bar}(s_1,s_2,s_3)+\mathbf{bar}(1-s_1,3-s_2,5-s_3)=2\rho_H$ boils down to a quartic equation in $s_3$ provided $s_1$ and $s_2$ are fixed, so that we can precisely determine if there is a solution as well as its exact value.  For the obvious choice $s_1=1/2$ and $s_2=3/2$ we do not find a solution, so we arbitrarily consider $s_1=1/4$ and $s_2=3/2$, to obtain the quartic equation 
\[ 30720s_3^4-184000s_3^3+386272s_3^2-348246s_3+115587=0.  \]
It admits two real roots (with complicated expression), one of which is approximately $2,6831$ and satisfies the Kähler condition for the decomposition. 
As a consequence, there exist pairs of coupled Kähler-Einstein metrics on this Fano fourfold. 

To conclude on this homogeneous space $G/H$, let us describe all four smooth and Fano embeddings whose existence is ensured by \cite{Pas08}. 
The elementary description is as follows. 
Consider $\bbC^5$ as the product $\bbC^2\times \bbC^3$, and the induced action of $\SL_2\times \SL_3$ on $\bbP^4$. 
There are three orbits under this action: the line $\bbP(\bbC^2\times\{0\})$, the plane $\bbP(\{0\}\times\bbC^3)$, and an open orbit isomorphic to $G/H$. 
The smooth and Fano embeddings of $G/H$ are the projective space $\bbP^4$ and its blowups along the line, the plane, and both. 
This fourth possibility coincides with the toroidal embedding considered above. 

Let us now turn to the case $(k_1,k_2)=(-1,2)$ which corresponds to the embedding $X=\bbP(\mathcal{O}\oplus\mathcal{O}(-1,2))$. 
In this case, we may again describe the semipositive real line bundles $L_{s_1,s_2,s_3}$ using three real parameters $0\leq s_1\leq s_2 \leq s_3$. 
The anticanonical line bundle is $L(1,3,7)$.
The moment polytope is \[ \Delta(s_1,s_2,s_3)=\{s_3\varpi_2+t(\varpi_1-2\varpi_2); s_1\leq t \leq s_2\}. \] 
Let $\mathbf{bar}(s_1,s_2,s_3)$ denote again the barycenter of $\Delta(s_1,s_2,s_3)$ with respect to the Duistermaat-Heckman polynomial. 

Let us consider the decompositions of the anticanonical of the form $L(1,3,7)=L(1/2,3/2,z)+L(1/2,3/2,7-z)$, which is a decomposition into Kähler classes for $3/2<z<11/2$. 
The equation $\mathbf{bar}(1/2,3/2,z)+\mathbf{bar}(1/2,3/2,7-z)=2\varpi_1+3\varpi_2$ translates to the quartic equation 
\[ 10(z^2-7z)^2+261(z^2-7z)+1631=0 \]
which admits two real solutions $z_{\pm}$ 
\[ \frac{3}{2}<z_-=\frac{35-\sqrt{5(\sqrt{2881}-16)}}{10}<z_+=\frac{35-\sqrt{5(\sqrt{2881}-16)}}{10}<\frac{11}{2}. \]
We thus obtain two examples of decompositions with coupled Kähler-Einstein metrics. 
Note that $z=7/2$ is not a solution of the above equation, so there are no Kähler-Einstein metrics on $X$.

\bibliographystyle{alpha}
\bibliography{HoroCMab}

\def\cprime{$'$}
\begin{thebibliography}{{Yao}17b}

\bibitem[BB13]{BB13}
Robert~J. Berman and Bo~Berndtsson.
\newblock Real {M}onge-{A}mp\`ere equations and {K}\"{a}hler-{R}icci solitons
  on toric log {F}ano varieties.
\newblock {\em Ann. Fac. Sci. Toulouse Math. (6)}, 22(4):649--711, 2013.

\bibitem[BP87]{BP87}
Michel {Brion} and Franz {Pauer}.
\newblock {Valuations des espaces homog\`enes sph\'eriques. (Valuations of
  spherical homogeneous spaces).}
\newblock {\em {Comment. Math. Helv.}}, 62:265--285, 1987.

\bibitem[{Bri}89]{Bri89}
Michel {Brion}.
\newblock Groupe de {P}icard et nombres caract\'eristiques des vari\'et\'es
  sph\'eriques.
\newblock {\em Duke Math. J.}, 58(2):397--424, 1989.

\bibitem[{Bri}97]{Bri97}
Michel {Brion}.
\newblock {Curves and divisors in spherical varieties.}
\newblock In {\em {Algebraic groups and Lie groups. A volume of papers in
  honour of the late R. W. Richardson}}, pages 21--34. Cambridge: Cambridge
  University Press, 1997.

\bibitem[CDS15a]{CDS15a}
Xiuxiong {Chen}, Simon~K. {Donaldson}, and Song {Sun}.
\newblock K\"ahler-{E}instein metrics on {F}ano manifolds. {I}: {A}pproximation
  of metrics with cone singularities.
\newblock {\em J. Amer. Math. Soc.}, 28(1):183--197, 2015.

\bibitem[CDS15b]{CDS15b}
Xiuxiong {Chen}, Simon~K. {Donaldson}, and Song {Sun}.
\newblock K\"ahler-{E}instein metrics on {F}ano manifolds. {II}: {L}imits with
  cone angle less than {$2\pi$}.
\newblock {\em J. Amer. Math. Soc.}, 28(1):199--234, 2015.

\bibitem[CDS15c]{CDS15c}
Xiuxiong {Chen}, Simon~K. {Donaldson}, and Song {Sun}.
\newblock K\"ahler-{E}instein metrics on {F}ano manifolds. {III}: {L}imits as
  cone angle approaches {$2\pi$} and completion of the main proof.
\newblock {\em J. Amer. Math. Soc.}, 28(1):235--278, 2015.

\bibitem[CPS19]{CPS18}
I.~A. Chel{\cprime}tsov, V.~V. Przhiyalkovski\u{\i}, and K.~A. Shramov.
\newblock Fano threefolds with infinite automorphism groups.
\newblock {\em Izv. Ross. Akad. Nauk Ser. Mat.}, 83(4):226--280, 2019.

\bibitem[Del17]{DelKE}
Thibaut Delcroix.
\newblock K\"ahler-{E}instein metrics on group compactifications.
\newblock {\em Geom. Funct. Anal.}, 27(1):78--129, 2017.

\bibitem[{Del}18]{DelHoro}
Thibaut {Delcroix}.
\newblock {Kähler geometry of horosymmetric varieties, and application to
  Mabuchi's K-energy functional}.
\newblock {\em J. Reine Angew. Math.}, Published online, 2018.

\bibitem[{Del}19]{DelExa}
Thibaut {Delcroix}.
\newblock {Examples of K-unstable Fano manifolds}.
\newblock arxiv:1911.08300, 2019.

\bibitem[Del20]{DelKSSV}
Thibaut Delcroix.
\newblock K-stability of {F}ano spherical varieties.
\newblock {\em Ann. Sci. \'{E}c. Norm. Sup\'{e}r. (4)}, 53(3):615--662, 2020.

\bibitem[{Don}08]{Don08}
Simon~K. {Donaldson}.
\newblock K\"ahler geometry on toric manifolds, and some other manifolds with
  large symmetry.
\newblock In {\em Handbook of geometric analysis. {N}o. 1}, volume~7 of {\em
  Adv. Lect. Math. (ALM)}, pages 29--75. Int. Press, Somerville, MA, 2008.

\bibitem[DS16]{DS16}
Ved Datar and G\'{a}bor Sz\'{e}kelyhidi.
\newblock K\"{a}hler-{E}instein metrics along the smooth continuity method.
\newblock {\em Geom. Funct. Anal.}, 26(4):975--1010, 2016.

\bibitem[FZ18]{FZ}
Akito {Futaki} and Yingying {Zhang}.
\newblock Coupled sasaki-ricci solitons.
\newblock arXiv:1812.03784, 2018.

\bibitem[HL20]{HL20}
Jiyuan {Han} and Chi {Li}.
\newblock {On the Yau-Tian-Donaldson conjecture for generalized Kähler-Ricci
  soliton equations}.
\newblock arxiv:2006.00903, 2020.

\bibitem[HN19]{HWN17}
Jakob Hultgren and D.~Witt Nystr\"{o}m.
\newblock Coupled {K}\"{a}hler-{E}instein metrics.
\newblock {\em Int. Math. Res. Not. IMRN}, (21):6765--6796, 2019.

\bibitem[Hul19]{Hul}
Jakob Hultgren.
\newblock Coupled {K}\"{a}hler-{R}icci solitons on toric {F}ano manifolds.
\newblock {\em Anal. PDE}, 12(8):2067--2094, 2019.

\bibitem[KKLV89]{KKLV89}
Friedrich {Knop}, Hanspeter {Kraft}, Domingo {Luna}, and Thierry {Vust}.
\newblock Local properties of algebraic group actions.
\newblock In {\em Algebraische {T}ransformationsgruppen und
  {I}nvariantentheorie}, volume~13 of {\em DMV Sem.}, pages 63--75.
  Birkh\"auser, Basel, 1989.

\bibitem[{Kno}91]{Kno91}
Friedrich {Knop}.
\newblock {The Luna-Vust theory of spherical embeddings.}
\newblock In {\em {Proceedings of the Hyderabad conference on algebraic groups
  held at the School of Mathematics and Computer/Information Sciences of the
  University of Hyderabad, India, December 1989}}, pages 225--249. Madras:
  Manoj Prakashan, 1991.

\bibitem[Li11]{Li11}
Chi Li.
\newblock Greatest lower bounds on {R}icci curvature for toric {F}ano
  manifolds.
\newblock {\em Adv. Math.}, 226(6):4921--4932, 2011.

\bibitem[LZ19]{LZ17}
Yan Li and Bin Zhou.
\newblock Mabuchi metrics and properness of the modified {D}ing functional.
\newblock {\em Pacific J. Math.}, 302(2):659--692, 2019.

\bibitem[{Mab}01]{Mab01}
Toshiki {Mabuchi}.
\newblock {K}ähler-einstein metrics for manifolds with vanishin {F}utaki
  character.
\newblock {\em Tohoku Math. J.}, 53(2):171--182, 2001.

\bibitem[{Mab}03]{Mab03}
Toshiki {Mabuchi}.
\newblock Multiplier {H}ermitian structures on {K}ähler manifolds.
\newblock {\em Nagoya Math. J.}, 170:73--115, 2003.

\bibitem[NSY17]{NSY17}
Yasufumi {Nitta}, Shunsuke {Saito}, and Naoto {Yotsutani}.
\newblock Relative ding stability of toric fano manifolds in low dimensions.
\newblock arxiv:1712.01131, 2017.

\bibitem[Pas06]{Pas06}
Boris Pasquier.
\newblock {\em Variétés horosphériques de Fano}.
\newblock PhD thesis, 2006.
\newblock Thèse de doctorat, Université Joseph Fourier (Grenoble) 2006.

\bibitem[{Pas}08]{Pas08}
Boris {Pasquier}.
\newblock {Vari\'et\'es horosph\'eriques de Fano.}
\newblock {\em {Bull. Soc. Math. Fr.}}, 136(2):195--225, 2008.

\bibitem[PP18]{Pin18}
Vamsi Pritham~Pingali.
\newblock Existence of coupled {K}\"{a}hler-{E}instein metrics using the
  continuity method.
\newblock {\em Internat. J. Math.}, 29(5):1850041, 8, 2018.

\bibitem[PS10]{PS10}
Fabio {Podest{\`a}} and Andrea {Spiro}.
\newblock K\"ahler-{R}icci solitons on homogeneous toric bundles.
\newblock {\em J. Reine Angew. Math.}, 642:109--127, 2010.

\bibitem[{Ruz}12]{Ruz12}
Alessandro {Ruzzi}.
\newblock Fano symmetric varieties with low rank.
\newblock {\em Publ. Res. Inst. Math. Sci.}, 48(2):235--278, 2012.

\bibitem[{Tia}96]{Tia96}
Gang {Tian}.
\newblock K\"ahler-einstein metrics on algebraic manifolds.
\newblock In {\em Trancendental Methods in Algebraic Geometry}, volume 1646 of
  {\em Lecture Notes in Mathematics}, pages 143--185. Springer, 1996.

\bibitem[{Tia}15]{Tia15}
Gang {Tian}.
\newblock K-stability and {K}\"ahler-{E}instein metrics.
\newblock {\em Comm. Pure Appl. Math.}, 68(7):1085--1156, 2015.

\bibitem[{Wan}12]{Wan12}
Yu~{Wang}.
\newblock On the $c^{2,\alpha}$-regularity of the complex monge-amp\`ere
  equation.
\newblock {\em Math. Res. Lett.}, 19(4):939--946, 2012.

\bibitem[WZ04]{WZ04}
Xu-Jia {Wang} and Xiaohua {Zhu}.
\newblock K\"ahler-{R}icci solitons on toric manifolds with positive first
  {C}hern class.
\newblock {\em Adv. Math.}, 188(1):87--103, 2004.

\bibitem[Yao17a]{Yao17}
Yi~Yao.
\newblock Greatest lower bounds on {R}icci curvature of homogeneous toric
  bundles.
\newblock {\em Internat. J. Math.}, 28(4):1750024, 16, 2017.

\bibitem[{Yao}17b]{Yao}
Yi~{Yao}.
\newblock {M}abuchi metrics and relative {D}ing stability of toric {F}ano
  varieties.
\newblock arXiv:1701.04016, 2017.

\bibitem[{Zhu}00]{Zhu00}
Xiaohua {Zhu}.
\newblock K\"ahler-ricci soliton typed equations on compact complex manifolds
  with $c_1(m) > 0$.
\newblock {\em J. Geom. Anal.}, 10(4):759--774, 2000.

\end{thebibliography}
\end{document}